\pdfoutput=1
\RequirePackage{ifpdf}
\ifpdf 
\documentclass[pdftex]{sigma}
\else
\documentclass{sigma}
\fi

\usepackage{mathtools}
\usepackage[dvipsnames,svgnames]{xcolor}

\numberwithin{equation}{section}

\usepackage{tikz}
\usetikzlibrary{shapes,arrows,positioning,decorations.markings,decorations.pathreplacing}

\usepackage{array}
\usepackage{adjustbox}
\usepackage{enumerate}

\newcommand{\ssp}{\hspace{1pt}}

\newtheorem{Theorem}{Theorem}[section]
\newtheorem*{Theorem*}{Theorem}
\newtheorem{Corollary}[Theorem]{Corollary}
\newtheorem{Lemma}[Theorem]{Lemma}
\newtheorem{Proposition}[Theorem]{Proposition}
\newtheorem{Conjecture}[Theorem]{Conjecture}

\theoremstyle{definition}
\newtheorem{Definition}[Theorem]{Definition}
\newtheorem{Remark}[Theorem]{Remark}

\begin{document}
\allowdisplaybreaks

\newcommand{\arXivNumber}{2602.04390}

\renewcommand{\PaperNumber}{081}

\FirstPageHeading

\ShortArticleName{Colored Interlacing Triangles and Genocchi Medians}

\ArticleName{Colored Interlacing Triangles and Genocchi Medians}

\Author{Natasha BLITVI\'C~$^{\rm a}$ and Leonid PETROV~$^{\rm b}$}

\AuthorNameForHeading{N.~Blitvi\'c and L.~Petrov}

\Address{$^{\rm a)}$~School of Mathematical Sciences, Queen Mary University of London, UK}
\EmailD{\mail{n.blitvic@qmul.ac.uk}}

\Address{$^{\rm b)}$~Department of Mathematics, University of Virginia, Charlottesville, VA, USA}
\EmailD{\mail{lenia.petrov@gmail.com}}

\ArticleDates{Received February 21, 2026, in final form August 09, 2026; Published online August 25, 2026}

\Abstract{Colored interlacing triangles, introduced by Aggarwal--Borodin--Wheeler (2024), provide the combinatorial framework for the central limit theorem for probability measures arising from the Lascoux--Leclerc--Thibon (LLT) polynomials. Colored interlacing triangles depend on two key parameters: the number of colors $n$ and the depth of the triangle $N$. Recent work of Gaetz--Gao (2025) connects these objects to Schubert calculus and resolves the enumeration for $n=3$ and arbitrary depth $N$. However, the enumerative behavior for general $n$ has remained open. In this paper, we analyze the complementary regime: fixed depth $N=2$ and arbitrary number of colors $n$. We prove that in this setting, colored interlacing triangles considered up to a simultaneous relabeling of the colors are in bijection with Dumont derangements, identifying their enumeration with the Genocchi medians. This connects the probabilistic model to a rich hierarchy of classical combinatorial objects. In particular, under this bijection, the natural decomposition of a triangle into consecutive indecomposable blocks corresponds to the direct-sum decomposition of the associated permutation (readily yielding a generating function identity). Using a~recent bijection of Poddar, Sawant, and Shankar (2026), we further identify these `indecomposable' triangles with pure D-permutations. Furthermore, we introduce a $q$-deformation of this enumeration arising naturally from the LLT transition energy. For identity bottom row, we show that the corresponding statistic is the sum of the lower-crossing and lower-nesting statistics on Dumont derangements, and is additive over indecomposable triangles. This yields a new $q$-analogue of the median Genocchi numbers with an elegant continued-fraction expansion, as well as an interpretation of the indecomposable enumerators as Boolean cumulants of a~family of probability measures on~$\mathbb R$. For arbitrary bottom rows, the failure of additivity is governed by an explicit inversion cross-term, leading to a natural $q$-analogue of the generating function identity for indecomposable triangles. Finally, we present computational results and sampling algorithms for colored interlacing triangles with higher $N$ or $n$, which suggests the limits of combinatorial tractability in the $(N,n)$ parameter space, outside of the $N=2$ or $n=3$ regimes.}

\Keywords{colored interlacing triangles; Genocchi medians; Dumont derangements; LLT polynomials}

\Classification{05A15; 60C05; 05E05; 05A19; 11B68}

\section{Introduction}
\label{sec:intro}

\subsection{Overview}

Colored interlacing triangles were introduced by Aggarwal--Borodin--Wheeler
\cite{ABW2023coloured_LLT} as discrete objects arising in the study of Gaussian
limits of probability measures coming from Cauchy identities for LLT
(Lascoux--Leclerc--Thibon) polynomials.
The pre-limit \emph{LLT processes} live on tuples of semistandard Young
tableaux, and generalize Schur processes of Okounkov--Reshetikhin
\cite{okounkov2003correlation}.
The central limit theorem behavior of the latter is described by the joint law
of eigenvalues of corners of random matrices from the Gaussian unitary ensemble
(GUE), see Okounkov--Reshetikhin~\cite{OkounkovReshetikhin2006RandomMatr} and
Gorin--Panova~\cite{GorinPanova2012_full}.
In the LLT generalization (which also contains a deformation parameter $q$),
along with the continuous GUE corners components, one also obtains a $q$-measure
on the space of certain discrete objects, the \emph{colored interlacing
triangles}.

In this paper, we study combinatorial aspects of colored interlacing triangles
in their own right, focusing on enumeration, bijections, $q$-counting, and
random sampling.
In particular, we obtain an exact enumeration of depth-$2$ triangles in terms of
Genocchi medians together with the finer structure of direct-sum
decompositions, explore a $q$-deformation of the general enumeration problem,
and provide computational results for higher depths, including a disproof of
\cite[Conjecture~A.5]{ABW2023coloured_LLT}.

\subsection{Colored interlacing triangles}

Fix $n,N\ge1$.
	We call $N$ the \emph{depth} of a triangle, and $n$ the \emph{number of
	colors}.
	A~(\emph{colored}) \emph{$n$\nobreakdash-triangle} $\boldsymbol\lambda$ is a
	collection of $nN(N+1)/2$ colored dots (where colors are represented by numbers), arranged
	into $n$ triangles of depth $N$, with each triangle consisting of $1$ dot at
	the bottommost level, $2$ dots at the next level, and so on, up to $N$ dots
	at the top level.

	We denote by $\lambda[i]^{k}_{j} \in\left\{ 1,\dots,n  \right\}$ the color
	of the dot which is located in the $i$-th triangle, on the $k$-th level, and
	at the $j$-th position from the left.
	For any level $k=1,\dots,N-1$, define a~\emph{linear order} between dots
	of all triangles at
	levels $k$ and $k+1$ as follows:
	\begin{align}
		& \lambda[1]^{k+1}_{1} \big\vert
		\lambda[1]^{k}_{1} \big\vert \lambda[1]^{k+1}_{2} \big\vert \cdots \big\vert
		\lambda[1]^{k+1}_{k} \big\vert \lambda[1]^k_k \big\vert
		\lambda[1]^{k+1}_{k+1}
		\big\|
		\lambda[2]^{k+1}_1 \big\vert \lambda[2]^k_1 \big\vert
		\cdots \nonumber\\
&\qquad \big\vert \lambda[n]^{k+1}_k \big\vert \lambda[n]^k_k \big\vert
		\lambda[n]^{k+1}_{k+1},\label{eq:interlacing_order}
	\end{align}
	where the vertical bars separate the entries from different levels,
	and ``$\|$'' separates the first triangle from the second one.
	Denote by $\lambda^{k}=(\lambda^k_1,\lambda^k_2,\dots,\lambda^k_{nk})$ the
	sequence of elements in the $k$-th row, read from left to right according to
	the linear order \eqref{eq:interlacing_order}.
	This notation suppresses the triangle index $[i]$, treating the entire level
	as a single row of length $nk$, which is especially helpful when dealing
	with small-$N$ cases.

\begin{Definition}
	\label{def:colored_interlacing_triangle} The $n$-triangle $\boldsymbol\lambda$ is \emph{interlacing} if it additionally satisfies two conditions:
	\begin{itemize}\itemsep=0pt
		\item For each color $b\in \{ 1,\dots,n  \}$
		and any level number $k=1,\dots,N$, there are exactly $k$ dots of color
		$b$ on the $k$-th level in the union of all triangles.
		\item
		For each color $b\in \{ 1,\dots,n  \}$ and any level
		$k=1,\dots,N-1$, the configurations of color-$b$ dots at levels $k$ and
		$k+1$ must \emph{interlace} (notation $\lambda^k\prec \lambda^{k+1}$).
		That is, between any two consecutive dots of color $b$ at level $k+1$,
		there is exactly one dot of color $b$ at level $k$.
		Here ``between'' is understood in the sense of the linear order~\eqref{eq:interlacing_order}.
	\end{itemize}
	See Figure~\ref{fig:small_interlacing_example} for an example.
\end{Definition}

	We denote the set of all colored interlacing $n$-triangles of depth $N$,
	\[
		\boldsymbol\lambda=\bigl\{\lambda[i]^{k}_{j}\mid 1\le i\le n,\, 1\le
		j\le k\le N\bigr\},
	\]
	by
	$\mathcal{T}_N(n)$.
	Let $T_N(n)\coloneqq\#\mathcal{T}_N(n)$ be the number of such triangles.

For future reference, let us denote by $\mathring{\mathcal{T}}_N(n)$ the subset
of $\mathcal{T}_N(n)$ consisting of triangles with the identity permutation of
colors at the bottom row, $\lambda^1 = (1,2,\dots,n)$.
Clearly, simultaneous permutations of all $n$ colors do not affect the
interlacing condition, so we have
\begin{equation}
	\label{eq:fixed_bottom_row} T_N(n) = n!
                                        \cdot \# \mathring{\mathcal{T}}_N(n).
\end{equation}

\begin{figure}[t]
	\centering
	\begin{tikzpicture}
		[scale=0.75, dot1/.style={circle, fill=red!80, minimum size=5.5mm, inner
		sep=0pt, font=\small\bfseries}, dot2/.style={circle,
		fill=green!70!black, minimum size=5.5mm, inner sep=0pt,
		font=\small\bfseries, text=white}, dot3/.style={circle, fill=blue!70,
		minimum size=5.5mm, inner sep=0pt, font=\small\bfseries, text=white}]
		\def\h{0.8} \def\v{1.05} \def\tsep{5.2*\h}
		\node[left] at (-4*\h, 0) {$k=1$}; \node[left] at (-4*\h, \v) {$k=2$};
		\node[left] at (-4*\h, 2*\v) {$k=3$};
		\draw[gray, densely dashed] (-3.5*\h,0) -- (14*\h,0); \draw[gray,
		densely dashed] (-3.5*\h,\v) -- (14*\h,\v); \draw[gray, densely dashed]
		(-3.5*\h,2*\v) -- (14*\h,2*\v);
		\begin{scope}[shift={(0,0)}]
			\draw[gray!40, thick] (-2*\h,2*\v) -- (2*\h,2*\v) -- (0,0) -- cycle;
			\draw[gray!40, thick] (-\h,\v) -- (\h,\v); \node[dot2] at
			(-2*\h,2*\v) {2}; \node[dot1] at (0,2*\v) {1}; \node[dot2] at
			(2*\h,2*\v) {2}; \node[dot2] at (-\h,\v) {2}; \node[dot1] at (\h,\v)
			{1}; \node[dot2] at (0,0) {2}; \node at (0,2*\v+0.7) {$i=1$};
		\end{scope}
		\begin{scope}[shift={(\tsep,0)}]
			\draw[gray!40, thick] (-2*\h,2*\v) -- (2*\h,2*\v) -- (0,0) -- cycle;
			\draw[gray!40, thick] (-\h,\v) -- (\h,\v); \node[dot3] at
			(-2*\h,2*\v) {3}; \node[dot3] at (0,2*\v) {3}; \node[dot1] at
			(2*\h,2*\v) {1}; \node[dot3] at (-\h,\v) {3}; \node[dot3] at (\h,\v)
			{3}; \node[dot3] at (0,0) {3}; \node at (0,2*\v+0.7) {$i=2$};
		\end{scope}
		\begin{scope}[shift={(2*\tsep,0)}]
			\draw[gray!40, thick] (-2*\h,2*\v) -- (2*\h,2*\v) -- (0,0) -- cycle;
			\draw[gray!40, thick] (-\h,\v) -- (\h,\v); \node[dot3] at
			(-2*\h,2*\v) {3}; \node[dot2] at (0,2*\v) {2}; \node[dot1] at
			(2*\h,2*\v) {1}; \node[dot2] at (-\h,\v) {2}; \node[dot1] at (\h,\v)
			{1}; \node[dot1] at (0,0) {1}; \node at (0,2*\v+0.7) {$i=3$};
		\end{scope}
		\begin{scope}[shift={(3.3*\tsep,0)}, scale=0.9]
			\draw[gray!40, thick] (-2*\h,2*\v) -- (2*\h,2*\v) -- (0,0) -- cycle;
			\draw[gray!40, thick] (-\h,\v) -- (\h,\v); \node[font=\footnotesize]
			at (-2*\h,2*\v) {$\lambda[i]^3_1$}; \node[font=\footnotesize] at
			(0,2*\v) {$\lambda[i]^3_2$}; \node[font=\footnotesize] at
			(2*\h,2*\v) {$\lambda[i]^3_3$}; \node[font=\footnotesize] at
			(-\h,\v) {$\lambda[i]^2_1$}; \node[font=\footnotesize] at (\h,\v)
			{$\lambda[i]^2_2$}; \node[font=\footnotesize] at (0,0)
			{$\lambda[i]^1_1$}; \node at (0,2*\v+0.8) {\small indexing};
		\end{scope}
	\end{tikzpicture}\vspace{-1.0mm}

	\caption{Left: A colored interlacing triangle with $n=3$ colors and $N=3$
	levels.
	Colors: \textcolor{red!80}{1 (red)}, \textcolor{green!70!black}{2 (green)},
	\textcolor{blue!70}{3 (blue)}.
	Right: indexing notation $\lambda[i]^k_j$ for the $i$-th triangle.
	Row sequences:
	$\lambda^{1}=(\textcolor{green!70!black}{2},\textcolor{blue!70}{3},\textcolor{red!80}{1})$, $\lambda^{2}=(\textcolor{green!70!black}{2},\textcolor{red!80}{1},\textcolor{blue!70}{3},\textcolor{blue!70}{3},\textcolor{green!70!black}{2},\textcolor{red!80}{1})$, $\lambda^{3}=(\textcolor{green!70!black}{2},\textcolor{red!80}{1},\textcolor{green!70!black}{2},\textcolor{blue!70}{3},\textcolor{blue!70}{3},\textcolor{red!80}{1},\textcolor{blue!70}{3},\textcolor{green!70!black}{2},\textcolor{red!80}{1})$.
	Each pair of consecutive rows may be written in one line using the linear
	order \eqref{eq:interlacing_order}, for example, $(2|2|1\|3|3|3\|2|1|1)$ for the
	first two levels.} \label{fig:small_interlacing_example}\vspace{-2.0mm}
\end{figure}

Let us briefly mention the probabilistic origin of these
objects from Aggarwal--Borodin--Wheeler
\cite{ABW2023coloured_LLT}.  The LLT
(Lascoux--Leclerc--Thibon) polynomials
\cite{lascoux1997ribbon} are a~$q$-deformation
of products of $n$ Schur polynomials,
reducing to the straightforward product
when $q=1$ (here $n$ is our number of colors).
Similarly to the Schur case
\cite{fulman1997probabilistic,okounkov2001infinite,okounkov2003correlation}, one can define
probability measures by normalizing Cauchy summation
identities.  Among the measures one can obtain in this
manner are coupled tuples of domino tilings of the Aztec
diamond introduced by Corteel--Gitlin--Keating~\cite{corteel2023ktilings}, see also
\cite{keating2024shuffling} for a sampling algorithm
generalizing classical domino shuffling to the coupled
setting.

Aggarwal--Borodin--Wheeler \cite{ABW2023coloured_LLT} established a central limit
theorem for measures based on LLT polynomials.
In the Schur case, the role of the Gaussian distribution in this regime is
played by the corners process of the Gaussian unitary ensemble (GUE) of random
matrix theory~\cite{GorinPanova2012_full,OkounkovReshetikhin2006RandomMatr}.
That is, one considers the joint distribution of eigenvalues of all top-left
corners of a random Gaussian Hermitian matrix, and this joint distribution
describes the limiting fluctuations of Schur processes.

Generalizing to the LLT level, one obtains a more complex limit object.
Namely, the distribution describing the fluctuations of LLT processes splits
into two independent parts: a \emph{continuous part}
consisting of $n$
independent GUE (Gaussian unitary ensemble) corners processes, and a~\emph{discrete part}, which is a $q$-weighted probability distribution on the
finite set $\mathcal{T}_N(n)$ of colored interlacing triangles from
Definition~\ref{def:colored_interlacing_triangle}.
The discrete part is nontrivial for $n>1$, while for $n=1$ (Schur case), one simply has
a single GUE corners process.

The combinatorial landscape of colored interlacing triangles is vast,
parameterized by the number of colors $n$ and the depth $N$.
A complete understanding requires exploring both dimensions.
Aggarwal--Borodin--Wheeler \cite{ABW2023coloured_LLT} initiated the study of
enumeration for fixed $n$, observing that $T_N(2) = 2^N$ since two-colored
interlacing triangles are isomorphic to binary trees
\cite[Proposition~A.1]{ABW2023coloured_LLT}.
They conjectured \cite[Conjecture~A.3]{ABW2023coloured_LLT} that
$T_N(3) = \frac{1}{4}\mathfrak{g}_N^{\Delta}(4)$, where
$\mathfrak{g}_N^{\Delta}(4)$ \cite[A153467]{OEIS} is the number of $4$-colorings
of a triangular grid of side length $N$.
This was proved by Gaetz and Gao \cite{gaetz2023bijection}, who uncovered deep
connections to Schubert calculus.
We independently observed that \cite[Conjecture~A.5]{ABW2023coloured_LLT}, which
posited $T_N(4) = \frac{1}{5}\mathfrak{g}_N^{*}(5)$ (where
$\mathfrak{g}_N^{*}(5)$ \cite[A068294]{OEIS} counts $5$-colorings of an
octagonal array), fails for $N \ge 3$; this was also reported by
\cite{gaetz2023bijection}.

The present work complements these fixed-$n$ results by exploring the
``horizontal'' direction ($N=2$, arbitrary $n$).
By fixing the depth, we let the number of colors grow indefinitely, revealing a
connection to the classical Genocchi numbers that is invisible when $n$ is
fixed.
This result, together with \cite{gaetz2023bijection}, maps out the landscape of
so far available enumerative results for colored interlacing triangles: the
vertical direction connects to geometry, while the horizontal direction connects
to the classical theory of permutation statistics, and presents a
straightforward path to interpreting $q$-analogs coming from the central limit
theorem for measures based on LLT polynomials.\looseness=-1

In the next Section~\ref{subsec:main_results}, we summarize our main results on
enumeration and $q$-counting in the discrete space $\mathcal{T}_N(n)$.

\subsection{Main results and outline of the paper}
\label{subsec:main_results}

In Section~\ref{sec:genocchi_case}, we prove that depth-$2$ colored interlacing triangles considered up to a simultaneous relabeling of the colors are in bijection with Dumont derangements (Section~\ref{subsec:dumont_bijection}), which are counted by the Genocchi medians $H_n$.  This connects the probabilistic model to classical combinatorial objects.  The bijection is simple and explicit, and it carries additional structure. In particular, in Section~\ref{subsec:indecomposable_pure}, we show that the bijection matches the visible splitting of triangles with the direct-sum decomposition on the permutation side. That is, `indecomposable' triangles correspond to indecomposable Dumont derangements.  Drawing on very recent work of \cite{PoddarSawantShankar2026}, we then exhibit a bijection between the indecomposable Dumont derangements and the \emph{pure} $D$-permutations of \cite{DebSokal2024} (Proposition~\ref{prop:pure_indecomposable_bijection}).  Although these two families were known to be equinumerous (as both are counted by the sequence $h^\flat$ of \cite{DebSokal2024}, \cite[A362111]{OEIS}, whose generating function has an elegant Stieltjes continued fraction expansion), this explicit correspondence between them appears not to have been noted previously.

In Section~\ref{sec:q_enumeration}, we analyze the intrinsic LLT $q$-statistic $\psi$ defined in \cite{ABW2023coloured_LLT}, which measures the ``energy'' of transitions between consecutive levels of a triangle.  We show that for $N=2$, weighting the triangles by $q^{\psi}$ leads to $q$-analogs of the Genocchi medians different from the standard ones (we recall the latter in Section~\ref{subsec:known_q_analogs}).  Moreover, $\psi$ admits a combinatorial description which respects the interval decomposition of Section~\ref{sec:genocchi_case}, in the sense that the weight $q^{\psi}$ is multiplicative over the indecomposable blocks (Proposition~\ref{prop:psi_multiplicative}).  In particular, this produces a~polynomial $q$-analog of the pure $D$-permutation counts $h^\flat$ (Remark~\ref{rem:indecomposable_triangles}).  For arbitrary bottom rows, the additivity acquires an explicit inversion cross-term, leading to a $q$-exponential form of the generating function (Theorem~\ref{thm:q_exponential_first_return}).  Over the identity bottom row, $\psi$ moreover coincides with the number of lower crossings and nestings of the associated Dumont derangement (Proposition~\ref{prop:psi_crossings_nestings}).  This yields a Stieltjes-type continued fraction for the resulting $q$-analog and, in Section~\ref{subsec:moments_cumulants}, identifies the indecomposable enumerators $\mathrm{Irr}_n(q)$ as Boolean cumulants.

Probing the hierarchy beyond $N=2$, in Section~\ref{subsec:computational_approach} we enumerate $\mathcal{T}_N(n)$ for a range of depths and sizes, finding that the counts exhibit large prime factors and eventually hit computational walls --- evidence that the combinatorics diverges from the closed-form patterns of the Genocchi case at $N=2$, and from the Schubert calculus connections at $n=3$.  Along the way, we provide an independent disproof of \cite[Conjecture~A.5]{ABW2023coloured_LLT}.

Returning to depth $N=2$, in Sections~\ref{subsec:q_data}, \ref{subsec:conjectures_on_coefficients} and \ref{subsec:linear_cumulant_phenomenon} we perform a detailed computational study of the $q$-Genocchi medians.  We conjecture formulas for the coefficients of low powers of $q$, and prove the one for the linear coefficient.  For a fixed power of $q$ and growing $n$, we establish a lower bound of the conjectured order, using the additivity of $\psi$ over the interval decomposition.  We also observe a curious ``linear cumulant'' phenomenon in the distribution of $\psi$.

Finally, in Section~\ref{sec:sampling}, we develop a Markov chain Monte Carlo algorithm
for sampling from the $q$-weighted measure on $\mathcal{T}_2(n)$, and use it to
experimentally probe the model for large $n$.
We observe some resemblance to Mallows permutons, and conjecture that the
statistic $\psi$ is asymptotically Gaussian.

\section{Two-level colored interlacing triangles and Genocchi medians}
\label{sec:genocchi_case}

In this section, we establish the enumeration formula for depth-$2$ colored
interlacing triangles in terms of Genocchi medians
(Section~\ref{subsec:dumont_bijection}), derive divisibility properties of the
counts (Section~\ref{subsec:divisibility}), and describe the decomposition
structure transported by the bijection
(Section~\ref{subsec:indecomposable_pure}).

\subsection{The bijection with Dumont derangements}
\label{subsec:dumont_bijection}

The sequence of Genocchi medians $H_n$ (also sometimes called the Genocchi
numbers of second kind) is a classical integer sequence in combinatorics with
several interpretations.
We refer to~\mbox{\cite{Dumont1974,DumontRandrianarivony1994}}, and the entry
\cite[A005439]{OEIS} for background and further references.
The following definition in terms of a certain class of permutations (called the
\emph{Dumont derangements}) goes back to~\cite{Dumont1974}.

\begin{Definition}\label{def:derrangement} A permutation $\sigma\in S_{2n}$ is called a
	\emph{Dumont derangement} (of second kind) if the following two conditions
	hold:
	\begin{equation*}
		\begin{cases}
			i< \sigma(i), & \textnormal{for all $i$ odd};			\\
			i> \sigma(i), & \textnormal{for all $i$ even}.
		\end{cases}
	\end{equation*}
	The number of Dumont derangements in $S_{2n}$ is called the $n$-th
	\emph{Genocchi median} and denoted by~$H_n$.
	We have
	\begin{equation*}
		H_0=H_1=1,\quad H_2=2,\quad H_3=8,\quad H_4=56,\quad H_5=608,\quad
		\dots
	\end{equation*}
\end{Definition}

Recent work often uses the broader language of \emph{$D$-permutations}: these
are permutations $\sigma\in S_{2n}$ satisfying $2i-1\le\sigma(2i-1)$ and
$2i\ge\sigma(2i)$ for all $i$.
The Dumont derangements defined above are precisely the derangements in this
class, also called \emph{$D$-derangements} in this literature. (See
\cite{LazarWachs2019} for this terminology and \cite{DebSokal2024} for
continued-fraction refinements of the Genocchi and median Genocchi numbers.)

Recall that we denote by $T_N(n)$ the number of colored interlacing
$n$-triangles of depth $N$ (Definition~\ref{def:colored_interlacing_triangle}).

\begin{Theorem}
	\label{thm:Genocchi_counts} We have $T_2(n)=n!\cdot H_n$ for all $n\geq 1$.
\end{Theorem}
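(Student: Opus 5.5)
By \eqref{eq:fixed_bottom_row} it suffices to show that $\#\mathring{\mathcal{T}}_2(n)=H_n$. I will do this with an explicit bijection between $\mathring{\mathcal{T}}_2(n)$ and the Dumont derangements in $S_{2n}$.

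\emph{Step 1: unpack the interlacing condition for $N=2$.} Fix the bottom row $\lambda^1=(1,2,\dots,n)$, so the level-$1$ dot of triangle $i$ has color $i$. Label the $2n$ level-$2$ slots by $1,\dots,2n$, putting slot $2i-1$ at $\lambda[i]^2_1$ and slot $2i$ at $\lambda[i]^2_2$. The linear order \eqref{eq:interlacing_order} then reads
\[
\text{slot }1 \,\big\vert\, 1 \,\big\vert\, \text{slot }2 \,\big\|\, \text{slot }3 \,\big\vert\, 2 \,\big\vert\, \text{slot }4 \,\big\|\, \cdots \,\big\|\, \text{slot }2n-1 \,\big\vert\, n \,\big\vert\, \text{slot }2n .
\]
The first condition of Definition~\ref{def:colored_interlacing_triangle} says that each color $c$ occupies exactly two level-$2$ slots. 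The second condition says that the unique level-$1$ dot of color $c$ lies strictly between them. Since that dot sits between slots $2c-1$ and $2c$, this means exactly one slot of color $c$ is in $\{1,\dots,2c-1\}$ and the other is in $\{2c,\dots,2n\}$. Call these slots $L_c\le 2c-1$ and $R_c\ge 2c$.

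\emph{Step 2: the map.} Define $\sigma\in S_{2n}$ by $\sigma(2c-1)=R_c$ and $\sigma(2c)=L_c$ for $c=1,\dots,n$. The slots $\{L_c,R_c\}$ for $c=1,\dots,n$ partition $\{1,\dots,2n\}$, so $\sigma$ is a permutation. The two defining inequalities of a Dumont derangement hold directly:
\[
\sigma(2c-1)=R_c\ge 2c>2c-1 \qquad\text{and}\qquad \sigma(2c)=L_c\le 2c-1<2c .
\]

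\emph{Step 3: the inverse.} Given a Dumont derangement $\sigma$, color slots $\sigma(2c-1)$ and $\sigma(2c)$ with color $c$. Every slot then receives exactly one color, and each color appears exactly twice. The inequalities $\sigma(2c)<2c$ and $\sigma(2c-1)>2c-1$ give $\sigma(2c)\le 2c-1$ and $\sigma(2c-1)\ge 2c$. So color $c$ has one slot on each side of the level-$1$ dot of color $c$, and by Step~1 this is an interlacing triangle with identity bottom row. The two constructions are visibly mutually inverse, which gives $\#\mathring{\mathcal{T}}_2(n)=H_n$, and hence $T_2(n)=n!\,H_n$.

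The only step requiring real care is Step~1: one has to check the reading order correctly, in particular that each level-$1$ dot sits between slots $2c-1$ and $2c$. With that in place, the rest is bookkeeping.
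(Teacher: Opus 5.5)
Your proposal is correct and follows essentially the same route as the paper. Both reduce to the identity bottom row via \eqref{eq:fixed_bottom_row}. Both then encode the two level-$2$ positions $p_1<p_2$ of color $c$ as $\sigma(2c)=p_1$, $\sigma(2c-1)=p_2$, so that interlacing around the level-$1$ dot between slots $2c-1$ and $2c$ becomes exactly the Dumont inequalities; your version is only more explicit about the inverse map.
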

\begin{proof}
	It suffices to show that $\# \mathring{\mathcal{T}}_2(n) = H_n$, see
	\eqref{eq:fixed_bottom_row}.
	Let us identify the valid top rows with Dumont derangements in~$S_{2n}$.

	The top row is a sequence of $2n$ elements where each color appears exactly
	twice.
	For a~color $c=1,\dots,n $, let $p_1 < p_2$ be the positions of its two
	occurrences, and define $\sigma \in S_{2n}$ by
	\begin{equation*}
		\sigma(2c) = p_1, \qquad \sigma(2c-1) = p_2.
	\end{equation*}
	Since the bottom row is the identity, the level-$1$ dot of color $c$
	belongs to the $c$-th triangle, and in the linear order
	\eqref{eq:interlacing_order} it sits between the level-$2$ positions
	$2c-1$ and $2c$.
	The interlacing condition requires that this dot lie strictly between the
	two level-$2$ occurrences of $c$, that is, $p_1 \le 2c-1$ and
	$p_2 \ge 2c$.
	These are precisely the Dumont derangement conditions $\sigma(2c) < 2c$
	and $\sigma(2c-1) > 2c-1$ of Definition~\ref{def:derrangement}; see
	Figure~\ref{fig:bijection_dumont} for an example.
\end{proof}

\begin{figure}[htbp]
	\centering
	\begin{tikzpicture}[
		scale=0.9,
		dot1/.style={circle, fill=red!80, minimum size=5mm, inner sep=0pt, font=\footnotesize\bfseries},
		dot2/.style={circle, fill=green!70!black, minimum size=5mm, inner sep=0pt, font=\footnotesize\bfseries, text=white},
		dot3/.style={circle, fill=blue!70, minimum size=5mm, inner sep=0pt, font=\footnotesize\bfseries, text=white}]
		\node[font=\footnotesize] at (-0.3,1.3) {$\lambda^2$};
		\node[font=\footnotesize] at (-0.3,0) {$\lambda^1$};
		\node[dot1] (t1) at (1,1.3) {1};
		\node[dot2] (t2) at (3,1.3) {2};
		\node[dot1] (t3) at (4,1.3) {1};
		\node[dot3] (t4) at (6,1.3) {3};
		\node[dot2] (t5) at (7,1.3) {2};
		\node[dot3] (t6) at (9,1.3) {3};
		\node[dot1] (b1) at (2,0) {1};
		\node[dot2] (b2) at (5,0) {2};
		\node[dot3] (b3) at (8,0) {3};
		\draw[red!80, line width=1pt] (t1) to[bend left=45] (t3);
		\draw[green!70!black, line width=1pt] (t2) to[bend left=58] (t5);
		\draw[blue!70, line width=1pt] (t4) to[bend left=45] (t6);
		\foreach \n/\p in {t1/1,t2/2,t3/3,t4/4,t5/5,t6/6}
			\node[below=2pt of \n, font=\scriptsize] {$\p$};
	\end{tikzpicture}

	\smallskip

	\begin{tikzpicture}[scale=0.9, every node/.style={font=\footnotesize}]
		\node[anchor=east] at (0.4,0.55) {$i=$};
		\node[anchor=east] at (0.4,-0.2) {$\sigma(i)=$};
		\foreach \i in {1,...,6} \node at (\i,0.55) {$\i$};
		\foreach \i/\s/\col in {1/3/red!80,2/1/red!80,3/5/green!70!black,4/2/green!70!black,5/6/blue!70,6/4/blue!70}
			\node[text=\col, font=\footnotesize\bfseries] at (\i,-0.2) {$\s$};
	\end{tikzpicture}
	\caption{The bijection of Theorem~\ref{thm:Genocchi_counts} for $n=3$, with identity
	bottom row $\lambda^1=(1,2,3)$.
	Top: a~depth-$2$ colored interlacing triangle in linear order. The level-$1$
	(identity) dots sit between the level-$2$ dots, and the two level-$2$ dots of
	each color interlace around its level-$1$ dot (arcs).
	Number the level-$2$ dots $1,\dots,2n$ from the left; if color $c$ occupies
	positions $p_1<p_2$, set $\sigma(2c-1)=p_2$ and $\sigma(2c)=p_1$.
	Bottom: the resulting permutation $\sigma=(3,1,5,2,6,4)\in S_6$, with entries
	colored by source color.}
	\label{fig:bijection_dumont}
\end{figure}

\subsection{Divisibility}
\label{subsec:divisibility}

The interpretation in terms of colored interlacing triangles
  yields the following divisibility property.

\begin{Proposition}\label{Prop2.3}
For $N\ge2$, the number $T_N(n)/n!$ is
  divisible by $2^{n-1}$.
\end{Proposition}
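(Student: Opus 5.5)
The plan is to exhibit a free action of $(\mathbb Z/2)^{n-1}$ on $\mathring{\mathcal T}_N(n)$. By \eqref{eq:fixed_bottom_row}, $T_N(n)/n!=\#\mathring{\mathcal T}_N(n)$, so freeness makes every orbit have size $2^{n-1}$ and gives the divisibility. The action should only touch the top level $N$, because level-$N$ dots are constrained by interlacing with level $N-1$ alone; nothing lies above them. In the linear order \eqref{eq:interlacing_order} for levels $N-1,N$, the top row is cut into $n(N-1)$ ``gaps'' by the level-$(N-1)$ dots, plus boundary segments. The interlacing condition for a color $b$ says the following: reading only the dots of color $b$ on levels $N-1$ and $N$, the levels alternate, starting and ending with a level-$N$ dot.

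The basic move is an elementary swap. Take two consecutive level-$N$ dots $\lambda[i]^N_j,\lambda[i]^N_{j+1}$ with colors $a\ne b$, separated by the level-$(N-1)$ dot $\lambda[i]^{N-1}_j$ of color $d$. If $d\notin\{a,b\}$, interchanging them preserves both conditions of Definition~\ref{def:colored_interlacing_triangle}. Relative to the color-$a$ and color-$b$ level-$(N-1)$ dots, each of the two moved dots stays in the same gap. The row multiplicities do not change. Lower levels are untouched. So the first step is to record this lemma.

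The second step is to attach one involution $\tau_c$ to each color $c\in\{2,\dots,n\}$ (or to each of $n-1$ ``internal boundaries'' between consecutive triangles, which may be the more natural indexing). I would define $\tau_c$ canonically. Look at the top-level dots of color $c$, and locate the first place, scanning left to right, where a color-$c$ top dot sits next (within one triangle) to a top dot of a different, larger color, such that the separating level-$(N-1)$ dot avoids both colors. Then $\tau_c$ performs that swap. The canonical location must be invariant under the swap itself, so that $\tau_c$ is an involution.

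The third step is to check that the $\tau_c$ commute and act freely, i.e.\ that $\prod_{c\in S}\tau_c$ has no fixed point for any nonempty $S$. Freeness of a single $\tau_c$ is automatic, since a swap of distinct colors changes the top row. For products, the plan is to show that $\tau_c$ only changes the relative order of color $c$ with larger colors, in a way detectable by a $\mathbb Z/2$-valued invariant $\epsilon_c$, and that $\epsilon_c$ is unchanged by $\tau_{c'}$ for $c'\ne c$. A natural candidate is the parity of some crossing count between color $c$ and the colors above it.

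For $N=2$, a sanity check is available through Theorem~\ref{thm:Genocchi_counts}. Here the claim reduces to $2^{n-1}\mid H_n$. This is classical; it matches $H_3=8$ and $H_4=56$. The construction should specialise to a known free action on Dumont derangements, namely swapping the values of an excedance pair. This case can serve as a guide for choosing the canonical swap positions.

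The main obstacle is the existence step: showing that every triangle admits, for each $c$, a swappable adjacent pair. The danger is that all top-level neighbours of color $c$ share its color, or are separated by a level-$(N-1)$ dot of one of the two colors. I would attack this with a counting argument. Color $c$ has $N$ top dots and $N-1$ next-level dots, and these must alternate. Consider a maximal block of color $c$ in the top row of a triangle. Its boundary forces a neighbour of another color. A separating dot of color $c$ or of the neighbouring color then contradicts alternation for one of the two colors. If this pigeonhole argument fails in some configuration, the fallback is induction on $n$. Remove the color $n$, show that $\mathring{\mathcal T}_N(n)$ fibres over a set with an extra factor of $2$, and use the inductive hypothesis for the remaining $2^{n-2}$.
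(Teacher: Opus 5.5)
There is a genuine gap, and it sits exactly at the step you flagged as the main obstacle: within-triangle swaps of distinct colors need not exist. Take the element of $\mathring{\mathcal T}_N(n)$ in which every dot of the $i$-th triangle has color $i$ (for $N=2$ this is $\lambda^{\mathrm{can}}$). It is valid, since each color lives in its own triangle with $k$ correctly alternating dots at level $k$. Here any two consecutive top dots $\lambda[i]^N_j,\lambda[i]^N_{j+1}$ inside one triangle have the same color. So no $\tau_c$ can be defined on this configuration. In fact, any map built from within-triangle swaps must fix it, so no scheme of this kind yields a free $(\mathbb Z/2)^{n-1}$-action. Your pigeonhole idea fails here because the edge of a maximal monochromatic block can be the triangle boundary, and you excluded neighbours across it. The remaining steps are only announced: a canonical choice invariant under its own swap, commutation, and the parity invariant $\epsilon_c$. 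The induction fallback is also left unspecified.

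The missing observation is the one you mention in passing and then set aside. At each of the $n-1$ separators $\|$ in \eqref{eq:interlacing_order}, the top dots $\lambda[i]^N_N$ and $\lambda[i+1]^N_1$ are immediate neighbours with \emph{no} level-$(N-1)$ dot between them. Interlacing therefore forces their colors to differ, because two consecutive color-$b$ top dots need a color-$b$ level-$(N-1)$ dot between them. Exchanging these two dots preserves interlacing for every color: each moved dot passes only a dot of a different color, and lower levels are untouched. These $n-1$ swaps act on pairwise disjoint pairs of positions, so they commute, and each one changes its own pair. The resulting $(\mathbb Z/2)^{n-1}$-action on $\mathring{\mathcal T}_N(n)$ is therefore free, with no canonical choices or parity invariants needed. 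This is the paper's argument. For $N=2$ it is $\sigma\mapsto(2i\ \ 2i{+}1)\circ\sigma$ on Dumont derangements, i.e., interchanging the values $2i$ and $2i+1$.
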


\begin{proof} Consider the set $\mathring{\mathcal{T}}_N(n)$, whose
  cardinality equals $T_N(n)/n!$ by~\eqref{eq:fixed_bottom_row}. For each
  $i=1,\ldots,n-1$, the entries $\lambda[i]^N_N$ and
  $\lambda[i+1]^N_1$ are immediate neighbors in the linear order~\eqref{eq:interlacing_order}, with no entries between them at level $N-1$. Therefore,
  $\lambda[i]^N_N \ne \lambda[i+1]^N_1$. Swapping these two
  entries produces a different valid top row, and this operation
  can be performed independently for each~$i$. This leads to
  $n-1$ independent involutions on $\mathring{\mathcal{T}}_N(n)$,
  which implies the divisibility.
\end{proof}

  Taking $N=2$ and using Theorem~\ref{thm:Genocchi_counts} recovers the well-known
  divisibility $2^{n-1} \mid H_n$ of the median Genocchi numbers.
  The quotient~$H_n/2^{n-1}$ is often called the \emph{normalized
  median Genocchi number} in the literature.

\begin{Remark}
	\label{rem:divisibility_no_improvement} One might hope to improve the
	divisibility in Proposition~\ref{Prop2.3} to $2^{(n-1)(N-1)}$
	by constructing independent involutions at each of the $N-1$ levels
	$2\le k \le N$.
	However, as the hypothetical involutions at lower levels are constrained by
	the interlacing conditions imposed from above, this may not be possible.
	Indeed, exact enumeration shows that such an improvement is not possible in
	general: $T_3(3)/3! = 88$ is divisible by $2^3$ but not by
	${2^{(3-1)(3-1)} = 16}$ (see Section~\ref{sec:enumeration} and
	Table~\ref{tab:2_adic_valuations} in particular).
\end{Remark}

\subsection[Indecomposable triangles and pure D-permutations]{Indecomposable triangles and pure $\boldsymbol{D}$-permutations}
\label{subsec:indecomposable_pure}

The bijection of Theorem~\ref{thm:Genocchi_counts} reveals
additional structure expressed in terms of interval
decompositions, which we now describe.  Throughout this
subsection, we work with the set
$\mathring{\mathcal{T}}_2(n)$ of depth-$2$ triangles with
identity bottom row.

\begin{Definition}
	\label{def:indecomposable_triangle}
	Let $\boldsymbol\lambda\in\mathring{\mathcal{T}}_2(n)$.
	We say that $\boldsymbol\lambda$ \emph{splits} after the $j$-th triangle
	($1\le j\le n-1$) if no color occurs both among the first $j$ triangles
	and after them; equivalently, no interlacing arc as in
	Figure~\ref{fig:bijection_dumont} crosses the division between the $j$-th and
	the $(j+1)$-st triangles.
	Cutting at every such division decomposes $\boldsymbol\lambda$ uniquely
	into consecutive blocks; shifting the colors identifies each block of
	width $w$ with an element of $\mathring{\mathcal{T}}_2(w)$.
	We call $\boldsymbol\lambda$ \emph{indecomposable} if it does not split
	in this manner.
\end{Definition}

\begin{Proposition}
	\label{prop:splits_direct_sum}
	Under the bijection of Theorem~{\rm \ref{thm:Genocchi_counts}},
	$\boldsymbol\lambda\in\mathring{\mathcal{T}}_2(n)$ splits after the
	$j$-th triangle if and only if the corresponding Dumont derangement
	$\sigma$ maps $\{1,\dots,2j\}$ to itself. Consequently, the indecomposable triangles correspond to the
	\emph{indecomposable} Dumont derangements, those preserving no
	prefix $\{1,\dots,r\}$ with $1\le r\le 2n-1$.
\end{Proposition}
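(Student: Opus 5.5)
The plan is to unwind the bijection of Theorem~\ref{thm:Genocchi_counts} directly. With identity bottom row, the first $j$ triangles contain exactly the level-$2$ positions $1,\dots,2j$. The definition of $\sigma$ is what matters here: for a color $c$ with level-$2$ positions $p_1<p_2$, we have $\sigma(2c)=p_1$ and $\sigma(2c-1)=p_2$. Hence $\sigma$ maps the pair $\{2c-1,2c\}$ onto the pair of positions occupied by color $c$. Consequently $\sigma(\{1,\dots,2j\})$ is the set of positions occupied by the colors $1,\dots,j$.

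For the first claim, I will compare two conditions. The triangle splits after the $j$-th triangle exactly when no color occurs both among positions $1,\dots,2j$ and among positions $>2j$. By the interlacing condition, color $c$ always has one occurrence at a position $\le 2c-1$ and one at a position $\ge 2c$. So each color $c\le j$ has an occurrence in $\{1,\dots,2j\}$, and each color $c>j$ has an occurrence at a position $\ge 2c>2j$.

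Suppose the triangle splits. Then every color $c\le j$ lies entirely inside $\{1,\dots,2j\}$, so $\sigma(\{1,\dots,2j\})\subseteq\{1,\dots,2j\}$, and equality holds by cardinality. Conversely, suppose $\sigma$ preserves $\{1,\dots,2j\}$. Then colors $1,\dots,j$ fill exactly positions $1,\dots,2j$, and colors $>j$ fill the rest, so no color straddles the division.

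For the consequence, I must show that indecomposability of the triangle (no even prefix $\{1,\dots,2j\}$ preserved) matches $\sigma$ preserving no prefix $\{1,\dots,r\}$ with $1\le r\le 2n-1$. The key step, and the only real obstacle, is to show that a Dumont derangement cannot preserve an odd prefix $\{1,\dots,2j-1\}$. Suppose it did. Since $\sigma$ is a bijection, it would then also preserve the complement $\{2j,\dots,2n\}$. The point $2j-1$ is odd, so $\sigma(2j-1)>2j-1$, which means $\sigma(2j-1)\ge 2j$ and lies outside the prefix, a contradiction. The case $r=2n$ is excluded trivially, so the prefixes preserved by $\sigma$ are exactly the even ones $\{1,\dots,2j\}$.

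Combining this with the first claim, the triangle is indecomposable if and only if $\sigma$ is indecomposable. I expect every step to be routine once the observation $\sigma(\{2c-1,2c\})=\{\text{positions of }c\}$ is recorded.
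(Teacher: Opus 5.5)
Your proposal is correct and is essentially the paper's argument: the paper's proof just says the statement is straightforward from the bijection of Theorem~\ref{thm:Genocchi_counts}, which amounts to your observation that $\sigma(\{2c-1,2c\})$ is the set of level-$2$ slots of color $c$. Your step showing that a Dumont derangement never preserves an odd prefix $\{1,\dots,2j-1\}$ is the same one the paper uses in the proof of Proposition~\ref{prop:pure_indecomposable_bijection}, so your write-up simply makes explicit what the paper leaves implicit.
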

\begin{proof}
	Straightforward from Theorem~\ref{thm:Genocchi_counts}.
\end{proof}

\begin{Corollary}
	\label{cor:indecomposable_count}
	Let $I_n$ denote the number of indecomposable elements of
	$\mathring{\mathcal{T}}_2(n)$.
	Then
	\begin{equation}
		\label{eq:first_return_counting}
		\sum_{n\ge0} H_n\ssp x^n=\biggl(1-\sum_{j\ge1} I_j\ssp
		x^j\biggr)^{-1},
	\end{equation}
	which determines the sequence $I_n=1,1,5,41,493,8161,\dots$.
\end{Corollary}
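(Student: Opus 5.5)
The plan is the standard first-return (sequence-of-blocks) argument, carried out on the triangle side using Definition~\ref{def:indecomposable_triangle} and Proposition~\ref{prop:splits_direct_sum}. The first step is to show that concatenating blocks is a bijection
\[
\mathring{\mathcal{T}}_2(n)\;\longleftrightarrow\;\bigsqcup_{k\ge1}\ \bigsqcup_{\substack{w_1+\dots+w_k=n\\ w_i\ge1}} \mathcal{I}_{w_1}\times\cdots\times\mathcal{I}_{w_k},
\]
where $\mathcal{I}_w\subset\mathring{\mathcal{T}}_2(w)$ denotes the set of indecomposable triangles, so that $\#\mathcal{I}_w=I_w$.

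The forward map cuts $\boldsymbol\lambda$ at every division where it splits. Each resulting block is then indecomposable. Suppose a block occupying triangles $a+1,\dots,b$ split internally after triangle $j$, with $a<j<b$. Then no color would occur both in triangles $a+1..j$ and in triangles $j+1..b$. No color crosses the boundaries at $a$ and $b$ either, since we cut there. Hence $\boldsymbol\lambda$ itself would split after $j$, contradicting the maximality of the cut.

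The inverse map concatenates indecomposable blocks, shifting the colors of the $i$-th block by $w_1+\dots+w_{i-1}$. We must check that the result is a valid element of $\mathring{\mathcal{T}}_2(n)$.
\begin{itemize}
\item The bottom row is the identity, because each block's bottom row is the identity after shifting.
\item Each color appears twice in the top row, and only within its own block.
\item Interlacing holds because it is a condition on each single color. In the linear order \eqref{eq:interlacing_order}, the level-$1$ dot of color $c$ and both level-$2$ dots of $c$ lie inside the stretch of that block. So their relative order is exactly as in the block.
\end{itemize}

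It remains to check that the two maps are mutually inverse. In the concatenation, the only divisions at which the triangle splits are the block boundaries. Any split inside a block would restrict to a split of that block, contradicting indecomposability. This restriction works because colors never leave their block. Hence cutting recovers exactly the original factorization.

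Equivalently, one can argue on the permutation side. Proposition~\ref{prop:splits_direct_sum} identifies splits with invariant even prefixes $\{1,\dots,2j\}$. One also notes that a Dumont derangement cannot preserve an odd prefix $\{1,\dots,2j-1\}$: otherwise $\sigma$ would fix $\{2j,\dots,2n\}$ setwise while mapping the even element $2j$ strictly below itself, which is impossible. The decomposition then becomes the usual direct-sum decomposition, and direct sums of Dumont derangements are again Dumont derangements, because the parity conditions are preserved under shifting by even amounts.

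Counting the bijection gives
\[
H_n=\sum_{k\ge0}\ \sum_{w_1+\dots+w_k=n} I_{w_1}\cdots I_{w_k},
\]
with $H_0=1$ coming from the empty composition. In generating functions this reads $\sum_n H_nx^n=\sum_{k\ge0}\bigl(\sum_{j\ge1}I_jx^j\bigr)^k$, which is \eqref{eq:first_return_counting}.

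Since $I_0=0$, the identity is equivalent to the triangular recursion
\[
I_n=H_n-\sum_{j=1}^{n-1}I_jH_{n-j}.
\]
So the $I_n$ are uniquely determined by the $H_n$. The listed values follow by direct computation from $H=1,1,2,8,56,608,\dots$: for example, $I_2=2-1=1$ and $I_3=8-2-1=5$.

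The only step needing care is the well-definedness of concatenation and the claim that no new splits appear inside blocks. Both reduce to the locality of the interlacing condition in the linear order \eqref{eq:interlacing_order}, so I expect no real obstacle.
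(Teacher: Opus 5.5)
Your proposal is correct and follows the paper's route. The paper likewise relies on the uniqueness of the interval decomposition of Definition~\ref{def:indecomposable_triangle}, but records only the width of the first block, giving $H_n=\sum_{j=1}^{n} I_j H_{n-j}$, which is equivalent to your sum over all compositions. Your check that cutting and concatenation are mutually inverse just spells out details the paper takes for granted.
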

\begin{proof}
	Since $\#\mathring{\mathcal{T}}_2(n)=H_n$ (proof of
	Theorem~\ref{thm:Genocchi_counts}) and the decomposition of
	Definition~\ref{def:indecomposable_triangle} is unique, recording the width of the
	first block gives the convolution identity
	$H_n=\sum_{j=1}^{n} I_j\ssp H_{n-j}$, $n\ge1$, which is equivalent to
	\eqref{eq:first_return_counting}.
	This is the classical interval decomposition of a
	combinatorial class into indecomposable components. (Compare the
	`indecomposable permutations' of \cite{Comtet1972} and see
	\cite{Beissinger1985} for a more general discussion.)
\end{proof}

\begin{Definition}[\cite{DebSokal2024}]
	\label{def:pure_D_permutations}
	A $D$-permutation is called \emph{pure} if it has no
	record-antirecords, that is, no indices which are simultaneously a
	left-to-right maximum and a right-to-left minimum.
\end{Definition}

Deb and Sokal \cite{DebSokal2024} counted the pure $D$-permutations, finding that
there are exactly $h^\flat_{m+1}$ of these in $S_{2m}$, where $h^\flat$
denotes the sequence
\begin{equation*}
	h^\flat_n=1,1,5,41,493,8161,\dots,
\end{equation*}
see \cite[A362111]{OEIS}.

Denote by $\alpha\oplus\beta$ the direct sum of two permutations, that is, the
concatenation of~$\alpha$ with~$\beta$ shifted up by the size of~$\alpha$.
It was shown in~\cite{DebSokal2024} that a record-antirecord of a~$D$-permutation $\sigma$
amounts to a pair $\{2i-1,2i\}$ of fixed points splitting off the prefix
$\{1,\dots,2i-2\}$. In other words, the two-letter identity permutation~$12$ occurs as a direct summand of~$\sigma$.
Purity thus forbids one specific direct summand, while
indecomposability forbids all splittings. Clearly, the latter is stronger: for
instance, $2143=21\oplus21$ is pure but decomposable, and $S_4$ contains
five pure but only four indecomposable $D$-permutations.

The initial values of $I_n$ in Corollary~\ref{cor:indecomposable_count} match the
sequence $h^\flat_n$.
Recall that
$I_n$ counts the
indecomposable Dumont derangements in $S_{2n}$, while $h^\flat_n$
counts the pure $D$-permutations in~$S_{2n-2}$.
We now identify the two families by an explicit bijection.
For $m\ge1$, Poddar, Sawant, and Shankar
\cite[Theorem~6.2]{PoddarSawantShankar2026} exhibit a simple bijection
$\sigma\mapsto\tau$ from the $D$-permutations in~$S_{2m}$ onto the Dumont
derangements in $S_{2m+2}$, given by
\begin{gather}
\tau(2)=1,\qquad \tau(2m+1)=2m+2,\qquad
	\tau(2i-1)=\sigma(2i-1)+1,\nonumber\\
	\tau(2i+2)=\sigma(2i)+1,\label{eq:PSS_map}
\end{gather}
where $1\le i\le m$.

\begin{Proposition}\label{prop:pure_indecomposable_bijection}
	For $m\ge1$, the bijection \eqref{eq:PSS_map} restricts to a bijection
	from the pure $D$\nobreakdash-per\-mutations in $S_{2m}$ onto the indecomposable
	Dumont derangements in $S_{2m+2}$.
\end{Proposition}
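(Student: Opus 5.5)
The plan is to use that \eqref{eq:PSS_map} is already a bijection from all $D$-permutations in $S_{2m}$ onto all Dumont derangements in $S_{2m+2}$ \cite[Theorem~6.2]{PoddarSawantShankar2026}. It then suffices to show, for $\tau$ the image of $\sigma$, that $\tau$ is decomposable if and only if $\sigma$ is not pure. For purity I will use the Deb--Sokal characterization recalled above: $\sigma$ has a record-antirecord if and only if for some $1\le j\le m$ the pair $\{2j-1,2j\}$ consists of fixed points and $\sigma$ maps $\{1,\dots,2j-2\}$ to itself.

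First I reduce decomposability of $\tau$ to even prefixes. Suppose $\tau$ preserves $\{1,\dots,r\}$ with $1\le r\le 2m+1$ and $r$ odd. Then $\tau(r)>r$ by the Dumont condition at the odd index $r$, which is a contradiction. So $r=2j$ with $1\le j\le m$.

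Next I translate ``$\tau$ preserves $\{1,\dots,2j\}$'' through \eqref{eq:PSS_map}. The indices $\le 2j$ are the following:
\begin{itemize}
\item $2$, with $\tau(2)=1$ automatically in range;
\item the odd indices $2i-1$ with $i\le j$, with values $\sigma(2i-1)+1$;
\item the even indices $2i+2$ with $i\le j-1$, with values $\sigma(2i)+1$.
\end{itemize}
(The index $2m+1$ exceeds $2j$, so it plays no role.) The condition therefore says that the $2j-1$ values $\sigma(1),\sigma(3),\dots,\sigma(2j-1)$ and $\sigma(2),\dots,\sigma(2j-2)$ all lie in $\{1,\dots,2j-1\}$. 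By counting, $\sigma$ maps $\{1,\dots,2j-1\}$ bijectively onto $\{1,\dots,2j-1\}$, and hence $\sigma(2j)\ge 2j$. Combined with the $D$-condition $\sigma(2j)\le 2j$, this gives $\sigma(2j)=2j$. Likewise the $D$-condition $\sigma(2j-1)\ge 2j-1$ together with $\sigma(2j-1)\le 2j-1$ gives $\sigma(2j-1)=2j-1$. Consequently $\sigma$ preserves $\{1,\dots,2j-2\}$ and fixes $2j-1,2j$, which is exactly a record-antirecord of $\sigma$.

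The converse runs the same equivalences backwards. If $\sigma$ fixes $2j-1$ and $2j$ and preserves $\{1,\dots,2j-2\}$, then the listed values of $\tau$ on $\{1,\dots,2j\}$ lie in $\{1,\dots,2j\}$. Hence $\tau$ preserves this prefix, with $2j\le 2m<2m+2$, so $\tau$ is decomposable.

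The only delicate point is the index bookkeeping in \eqref{eq:PSS_map}: checking exactly which $\sigma$-values land in the prefix of $\tau$, including the boundary cases $j=1$ and $j=m$. The rest is a counting argument combined with the two $D$-inequalities.
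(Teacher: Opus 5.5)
Your proof is correct and takes essentially the same route as the paper. You rule out odd prefixes using the Dumont condition at odd indices, observe that $\tau$ preserves $\{1,\dots,2j\}$ exactly when $\sigma$ preserves $\{1,\dots,2j-1\}$, and identify the latter with a record-antirecord via the $D$-inequalities, explicitly deriving $\sigma(2j-1)=2j-1$ and $\sigma(2j)=2j$, a step the paper leaves implicit in ``$\sigma$ is pure iff it preserves no odd prefix.''
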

\begin{proof}
	For $1\le k\le m$, the positions $1,\dots,2k$ consist of the odd
	positions $2i-1$ with $1\le i\le k$, the position $2$, and the
	positions $2i+2$ with $1\le i\le k-1$.
	Hence, by \eqref{eq:PSS_map},
	\begin{equation*}
		\tau(\{1,\dots,2k\})=\{1\}\cup
		\bigl(\sigma(\{1,\dots,2k-1\})+1\bigr),
	\end{equation*}
	so $\tau$ preserves the even prefix $\{1,\dots,2k\}$ if and only if
	$\sigma$ preserves the odd prefix $\{1,\dots,{2k-1}\}$.
	By the record-antirecord characterization recalled after
	Definition~\ref{def:pure_D_permutations}, $\sigma$ is pure if and only if it
	preserves no odd prefix; and since a Dumont derangement never preserves
	an odd prefix (preserving $\{1,\dots,2k-1\}$ would force
	$\tau(2k-1)\le 2k-1$, contradicting Definition~\ref{def:derrangement}), $\tau$~is indecomposable if and only if it preserves no even prefix.
	The restriction claim follows.
	Combining it with Proposition~\ref{prop:splits_direct_sum} and with
	$h^\flat_{m+1}=\#\{\text{pure $D$-permutations in }S_{2m}\}$
	\cite{DebSokal2024} yields $I_{m+1}=h^\flat_{m+1}$ for $m\ge1$, while
	$I_1=1=h^\flat_1$ directly.
\end{proof}

As an example, for $m=2$, the map \eqref{eq:PSS_map} sends the five pure
$D$-permutations $2143$, $3142$, $3241$, $4132$, $4231$ to the five
indecomposable Dumont derangements in $S_6$.
Combining Proposition~\ref{prop:pure_indecomposable_bijection} with the continued fraction for pure $D$-permutations from~\cite{DebSokal2024}, we obtain the following Stieltjes continued fraction for indecomposable Dumont derangements (equivalently, our indecomposable colored interlacing triangles).

\begin{Corollary}\label{cor:indecomposable_Sfrac} As formal power series,
\begin{equation*} \sum_{n\ge0}I_{n+1}\ssp x^n=\cfrac{1}{1-\cfrac{\alpha_1 x}{1-\cfrac{\alpha_2 x}{1-\cdots}}},
\end{equation*}
 with coefficients $\alpha_{2k-1}=k^2$
and $\alpha_{2k}=(k+1)^2$.
\end{Corollary}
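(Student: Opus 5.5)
The plan is to transport the Deb--Sokal continued fraction for pure $D$-permutations through Proposition~\ref{prop:pure_indecomposable_bijection}. By that proposition, together with Proposition~\ref{prop:splits_direct_sum} and the direct identification $I_1=1=h^\flat_1$, we have $I_{n+1}=h^\flat_{n+1}$ for all $n\ge0$. Here $h^\flat_{n+1}$ is the number of pure $D$-permutations in $S_{2n}$, with the convention that the empty permutation is pure, so $h^\flat_1=1$. Hence $\sum_{n\ge0}I_{n+1}x^n=\sum_{n\ge0}h^\flat_{n+1}x^n$. The statement therefore reduces to the S-fraction for the generating function of pure $D$-permutations established in \cite{DebSokal2024}.

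Next, I would locate the relevant theorem in \cite{DebSokal2024}. There the sequence $h^\flat$ arises by specializing their multivariate continued fractions for $D$-permutations, namely the case in which record-antirecords get weight zero. The step to check carefully is the matching of indices.

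\begin{itemize}
\item Deb--Sokal index pure $D$-permutations of $S_{2n}$ as $h^\flat_{n+1}$. I need to confirm that their S-fraction is written for the series $\sum_n h^\flat_{n+1}x^n$, with constant term $1$ coming from $S_0$.
\item I need to confirm that their coefficients are $\alpha_{2k-1}=k^2$ and $\alpha_{2k}=(k+1)^2$.
\end{itemize}

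As a check, I would expand the continued fraction to low order:
\[
1+\alpha_1x+\alpha_1(\alpha_1+\alpha_2)x^2+\cdots=1+x+5x^2+\cdots.
\]
This matches $I_1,I_2,I_3=1,1,5$, which supports the indexing. Computing one more term, the $x^3$ coefficient
\[
\alpha_1\bigl((\alpha_1+\alpha_2)^2+\alpha_2\alpha_3\bigr)=25+16=41=I_4,
\]
confirms the indexing convention.

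The main obstacle is purely bookkeeping: making sure that the Deb--Sokal specialization producing $h^\flat$ gives exactly these coefficients, with no shift in $k$ and no extra factor of $x$. If the cited result is instead stated for the full $D$-permutation series $\sum h_n x^n$ with record-antirecord weight $\mathbf{e}$, I would specialize $\mathbf{e}=0$ in their master S-fraction. Then I would read off the coefficients: the weight $\mathbf{e}$ appears only in the odd coefficients, as $\mathbf{e}+(\cdots)$, and setting it to $0$ should leave $k^2$ and $(k+1)^2$.

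As an alternative route, one could derive the result from the known S-fraction for $H_n$, whose coefficients are $\lceil k/2\rceil^2$-type, together with \eqref{eq:first_return_counting}, using the standard relation between an S-fraction and its first-return series. That is more computation, so I would use it only as a fallback.
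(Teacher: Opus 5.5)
Your proposal is correct and is the paper's argument: you transport via Proposition~\ref{prop:pure_indecomposable_bijection} (with $I_1=h^\flat_1=1$) to get $I_{n+1}=h^\flat_{n+1}$, then quote the Deb--Sokal S-fraction for $(h^\flat_{n+1})_{n\ge0}$; your low-order checks $1,1,5,41$ are right.

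Two side remarks. First, your ``fallback'' is actually a one-liner that avoids pure $D$-permutations altogether. By \eqref{eq:first_return_counting}, $\sum_{j\ge1}I_jx^j=1-\bigl(\sum_{n\ge0}H_nx^n\bigr)^{-1}$, which strips the first level ($\alpha_1=1$) off the classical S-fraction with $\alpha_{2k-1}=\alpha_{2k}=k^2$ and leaves the shifted coefficients $1,4,4,9,9,\dots$; this is exactly how the paper obtains the $q$-version, Corollary~\ref{cor:shifted_Irr_Sfrac}. Second, your guess that a record-antirecord weight would enter only the odd S-fraction coefficients cannot be right, because already $\alpha_2$ changes from $2$ for all $D$-permutations ($1,2,8,56,\dots$) to $4$ for pure ones.
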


\begin{proof}
Follows from
the continued fraction for pure $D$-permutations, given in \cite{DebSokal2024}  (see their sequence
$(h^\flat_{n+1})_{n\ge0}$).
\end{proof}

\begin{Remark}\label{rem:classical_Sfrac}
Compare Corollary~\ref{cor:indecomposable_Sfrac} to the classical
continued fraction for the Genocchi medians, given by $\alpha_{2k-1}=\alpha_{2k}=k^2$, further discussed in
Section~\ref{subsec:moments_cumulants} below.\end{Remark}

\begin{Remark}
	\label{rem:hyperplane_regions}
	The bijection \eqref{eq:PSS_map} also places our triangles in a geometric
	context.
	Fang, Fu, Li, and Yuan
	\cite[Theorem~4.8]{FangFuLiYuan2025} construct a bijection between the
	$D$-permutations in $S_{2m}$ and the permutations of $[2m]$ all of whose
	ascents go from an odd letter to an even one.
	By \cite[Theo\-rem~1.4]{FangFuLiYuan2025}, the latter label the regions of
	the hyperplane arrangement
	$\mathcal{K}_{2m}=\{{x_{2i-1}-x_{2j}=0}\mid 1\le i\le j\le m\}$ of
	\cite{LazarWachs2019}, which is linearly isomorphic to the homogenized
	Linial arrangement of~\cite{Hetyei2019}.
	Composing with \eqref{eq:PSS_map}, we obtain an explicit bijection between
	$\mathring{\mathcal{T}}_2(m+1)$ and the regions of $\mathcal{K}_{2m}$.
\end{Remark}

In the following section, we refine this enumeration for the $q$-statistic
introduced in \cite{ABW2023coloured_LLT},
which, as we show, translates into natural combinatorial structure.

\section[The q-enumeration of colored interlacing triangles]{The $\boldsymbol{q}$-enumeration of colored interlacing triangles}
\label{sec:q_enumeration}

In this section, we explore the $q$-counting statistic, which we denote by
$\psi(\lambda^{k},\lambda^{k+1})$, on colored interlacing triangles based on
interactions between consecutive levels.
This statistic
arises from the
central limit theorem for LLT processes \cite[Section~1.6]{ABW2023coloured_LLT} (in that paper,
it is denoted by~$\xi(\cdot,\cdot)$).
The $\psi$-statistic and the $q$-counting polynomial introduced below are
defined for general depth~$N$, and we keep the general $N$ setting here for completeness.
Note, however, that most results in this paper are specific to the depth $N=2$ case.

For depth $N=2$, we obtain new $q$-analogs of the quantities
$T_2(n)=n!\cdot H_n$, different from the standard $q$-analogs of the Genocchi
medians $H_n$ recalled below.
We begin with a brief survey of three known $q$-analogs of the Genocchi medians.

\subsection[Known q-analogs of Genocchi medians]{Known $\boldsymbol{q}$-analogs of Genocchi medians}
\label{subsec:known_q_analogs}

Several $q$-analogs of the Genocchi medians can be obtained by $q$-deforming the
various classical definitions of these numbers, including generating functions,
recurrences, and combinatorial models obtained by assigning a $q$-weight to the
relevant objects (in particular, Dumont derangements).

First, Randrianarivony \cite{Randrianarivony1997} simply weights each Dumont
derangement $\sigma\in S_{2n}$ by the usual inversion number
$\mathrm{inv}(\sigma)$, and also provides Stieltjes-type continued fraction
expansions for the corresponding generating function.
The first few $q$-deformations from \cite{Randrianarivony1997} are
\begin{equation}
	\label{eq:Randrianarivony_q_Gandhi}
	\begin{split}
		&H_1^R(q)= q, \\
		&H_2^R(q)= q^2 (1 + q), \\
		&H_3^R(q)= q^3 (1 + q)^3 \bigl(1 - q + q^2\bigr), \\
		&H_4^R(q)= q^4 (1 + q)^3 \bigl(1 + 2q^3 + q^5 + 2q^6 + q^7\bigr), \\
		&H_5^R(q)= q^5 (1 + q)^5 (1 - q + q^2)^2 \bigl(1 + q + q^3 + q^4 + 2q^5 +
		4q^6 + 5q^7 + 3q^8 + q^9\bigr).
	\end{split}
\end{equation}
Second, Han and Zeng \cite{HanZeng1999,HanZeng1999Discrete}
introduced polynomials $C_n(x,q)$ via a $q$-deformation of the classical Gandhi
recurrence:
\begin{gather*}
	\Delta_q f(x) = \frac{f(1+qx)-f(x)}{1+(q-1)x}, \qquad C_1(x,q)=1, \\
	C_n(x,q) = (1+qx) \Delta_q  ( x C_{n-1}(x,q) ), \qquad n \ge 2.
\end{gather*}
The values at $x=1$ yield the following $q$-analogs of the Genocchi medians:
\begin{equation}
	\label{eq:HanZeng_q_Gandhi}
	\begin{split}
		&H_1^{HZ}(q)= 1, \\
&H_2^{HZ}(q)= 1 + q, \\
&H_3^{HZ}(q)= (1+q)^3, \\
		&H_4^{HZ}(q)= (1+q)^3 \bigl(1 + 3q + 2q^2 + q^3\bigr), \\
		&H_5^{HZ}(q)= (1+q)^5 \bigl(1 + 5q + 5q^2 + 5q^3 + 2q^4 + q^5\bigr).
	\end{split}
\end{equation}
These polynomials have a combinatorial interpretation in terms of a certain
\emph{Denert statistic} on Dumont derangements \cite{HanZeng1999Discrete}, which
separately treats inversions between odd and even positions.
Feigin \cite{Feigin2012} identified the polynomials \eqref{eq:HanZeng_q_Gandhi}
(divided by $(1+q)^{n-1}$) as the Poincar\'e polynomials of degenerate flag
varieties.

Finally, the third $q$-analog was introduced by Zeng and Zhou
\cite{ZengZhou2006} via a $q$-deformation of the Seidel triangle recurrence~\cite{Seidel1877}:
\begin{align*}
	&g_{1,1}(q)= g_{2,1}(q)=1,\\
	&g_{2i+1,j}(q)= g_{2i+1,j-1}(q)+q^{j-1}g_{2i,j}(q), \qquad 1\le j\le i+1, \\
	&g_{2i,j}(q)= g_{2i,j+1}(q)+q^{j-1} g_{2i-1,j}(q), \qquad 1\le j\le i,
\end{align*}
with the boundary conditions $g_{2i+1,0}(q)=0$ and $g_{2i,i+1}(q)=0$ for all
$i\ge1$.
The $q$-Genocchi medians are $H_n^{ZZ}(q) := g_{2n,1}(q)$:
\begin{equation}
	\label{eq:ZengZhou_q_Genocchi}
	\begin{split}
		&H_1^{ZZ}(q)= 1, \\ &H_2^{ZZ}(q)= 1 + q, \\ &H_3^{ZZ}(q)= (1 + q)^2 \bigl(1 + q^2\bigr), \\
		&H_4^{ZZ}(q)= (1 + q)^2 \bigl(1 + q^2\bigr) \bigl(1 + q + q^2 + 2q^3 + q^4 + q^5\bigr), \\
		&H_5^{ZZ}(q)= (1 + q)^3 \bigl(1 + q^2\bigr)^2 \bigl(1 - q + q^2\bigr) \bigl(1 + 2q + 2q^2 + 3q^3
		+ 4q^4 + 4q^5 + 2q^6 + q^7\bigr).
	\end{split}
\end{equation}
These can also be interpreted in terms of a charge statistic on so-called strict
alternating pistols~\cite{ZengZhou2006}.

We see that all of these $q$-analogs \eqref{eq:Randrianarivony_q_Gandhi},
\eqref{eq:HanZeng_q_Gandhi}, and
\eqref{eq:ZengZhou_q_Genocchi}
are different.
Moreover, the divisibility by a power of $2$ in the original Genocchi medians
(Proposition~\ref{Prop2.3}) is replaced by divisibility by a~power of
$(1+q)$.

\subsection[Inter-level statistic psi on colored interlacing triangles]{Inter-level statistic $\boldsymbol{\psi}$ on colored interlacing triangles}
\label{subsec:inter_level_statistic}

The probability measure on colored interlacing triangles coming from the central
limit theorem for LLT processes
\cite{ABW2023coloured_LLT}
is decomposed into a product of the top-row contribution (involving a~complicated vertex model with signed configuration weights, which we do not
pursue here\footnote{However, for depth $N=1$, this top-row vertex model yielding weights denoted by \smash{$g_{\Delta}^{c^{[N]}}$} in \cite{ABW2023coloured_LLT},
reduces to the well-known Mallows measure on $S_n$, under which a permutation
$\sigma$ is weighted proportionally to $q^{\mathrm{inv}(\sigma)}$.}) and simpler
inter-level contributions given by powers of $q$.
We focus on the latter: for each pair of consecutive levels in a colored
triangle, define a statistic $\psi\bigl(\lambda^{k}, \lambda^{k+1}\bigr)$, $k=1,\dots,N-1$:

\begin{Definition}[\cite{ABW2023coloured_LLT}]
	\label{def:psi} Fix $k=1,\dots,N-1$, and let $\lambda^{k+1}=\nu$ and
	$\lambda^{k}=\mu$ be two consecutive rows of a colored interlacing
	$n$-triangle $\boldsymbol\lambda$.
	Consider a one-row vertex model in which arrows can have one of $n$ colors,
	the horizontal line can carry an arbitrary composition of colors, and
	vertical lines can carry only one color:
	\begin{equation}
		\label{eq:psi_factor_vertex_model}
		\begin{tikzpicture}
			[scale=1.0,baseline=(current bounding box.center),>=stealth]
			\colorlet{lgray}{white!85!black} \draw[lgray,line width=4pt,<-]
			(0.5,0) -- (12,0); \foreach\x in {1,2,3,5,6,7,9,10,11}{
			\draw[lgray,line width=1.5pt,->] (\x,0) -- (\x,0.5); } \node[above]
			at (1,0.5) {$\nu[1]_1$}; \node[above] at (2,0.5) {$\cdots$};
			\node[above] at (3,0.5) {$\nu[1]_{k+1}$};
			\node[above] at (5,0.5) {$\cdots$}; \node[above] at (6,0.5)
			{$\cdots$}; \node[above] at (7,0.5) {$\cdots$};
			\node[above] at (9,0.5) {$\nu[n]_1$}; \node[above] at (10,0.5)
			{$\cdots$}; \node[above] at (11,0.5) {$\nu[n]_{k+1}$};
			\foreach\x in {1.5,2.5,5.5,6.5,9.5,10.5}{ \draw[lgray,line
			width=1.5pt,->] (\x,-0.5) -- (\x,0); }
			\node[below] at (1.5,-0.5) {$\mu[1]_1$}; \node[below] at (2,-0.3)
			{$\cdots$}; \node[below] at (2.5,-0.5) {$\mu[1]_k$};
			\node[below] at (5.5,-0.5) {$\cdots$}; \node[below] at (6.5,-0.5)
			{$\cdots$};
			\node[below] at (9.2,-0.5) {$\mu[n]_1$}; \node[below] at (10,-0.3)
			{$\cdots$}; \node[below] at (10.7,-0.5) {$\mu[n]_{k}$};
			\node[left] at (0.5,0) {$\varnothing$}; \node[right] at (12,0)
			{$[1,n]$};
		\end{tikzpicture}
	\end{equation}
	Here, the incoming configuration from the right has an arrow of each color
	$b=1,\dots,n$, the outgoing configuration on the left is empty, and the
	colors are conserved in the direction of arrow flow.
	That is, there are two possible types of vertices:
	\begin{equation}
		\label{eq:psi_factor_two_types_of_vertices}
		\begin{tikzpicture}
			[scale=1.25,baseline=(current bounding box.center),>=stealth]
			\colorlet{lgray}{white!85!black} \draw[lgray,line width=4pt,<-]
			(0.5,0) -- (1.5,0); \draw[lgray,line width=1.5pt,->] (1,-0.5) --
			(1,0); \node[right] at (1,-0.5) {$j$}; \node[left] at (0.5,0)
			{$A\cup \left\{ j \right\}$}; \node[right] at (1.5,0) {$A$}; \node
			at (1,0.5) {};
		\end{tikzpicture}
		\qquad \textnormal{and} \qquad
		\begin{tikzpicture}
			[scale=1.25,baseline=(current bounding box.center),>=stealth]
			\colorlet{lgray}{white!85!black} \draw[lgray,line width=4pt,<-]
			(0.5,0) -- (1.5,0); \draw[lgray,line width=1.5pt,->] (1,0) --
			(1,0.5); \node[left] at (0.5,0) {$A\setminus\left\{ j \right\}$};
			\node[right] at (1.5,0) {$A$}; \node[left] at (1,0.5) {$j$}; \node
			at (1,-0.5) {};
		\end{tikzpicture}
	\end{equation}
	Here $A\subset\left\{ 1,\dots,n  \right\}$ is an arbitrary subset of
	colors.
	Denote
	\begin{equation*}
		\psi(\mu,\nu) \coloneqq \sum_{p}\,
		\#\left( A_p\cap \left\{ \mu_p+1,\dots,n  \right\} \right),
	\end{equation*}
	where the sum is over the positions $p$ of the bottom row $\mu$ (one vertex of
	the first type in \eqref{eq:psi_factor_two_types_of_vertices} for each), $\mu_p$
	is the color inserted from below at $p$, and $A_p\subseteq\{1,\dots,n\}$ is the
	set of colors on the horizontal line immediately to the right of that vertex.
	In words, the $p$-th summand is the number of arrows of colors greater than
	$\mu_p$ which pass over~$\mu_p$ when it is inserted from below.

	The conservation of arrows implies that for fixed rows $\mu$ and $\nu$ which
	are interlacing in the sense of Definition~\ref{def:colored_interlacing_triangle},
	there is only one configuration of arrows in~\eqref{eq:psi_factor_vertex_model}.
\end{Definition}

For the colored interlacing triangle in Figure~\ref{fig:small_interlacing_example}, we
have $\psi(\lambda^1, \lambda^2) = 2$ and $\psi(\lambda^2, \lambda^3) = 3$.

\textbf{Combinatorial interpretation of $\boldsymbol{\psi}$.}
The statistic $\psi(\mu, \nu)$ from Definition~\ref{def:psi} admits an equivalent
description which does not involve a vertex model.
Consider the transition from level $k$ to level $k+1$.
For a position $p$ in the linear order \eqref{eq:interlacing_order} and a color
$c'$, let
\begin{align*}
	&T_{<p}(c')\coloneqq \#\{\text{level-$(k+1)$ positions of $c'$ that are to the left of $p$}\},\\
	&B_{<p}(c')\coloneqq \#\{\text{level-$k$ positions of $c'$ that are to the
	left of $p$}\}
\end{align*}
denote the number of occurrences of $c'$ in the top and bottom rows,
respectively, to the left of~$p$.
Then
\begin{equation}
	\label{eq:psi_combinatorial} \psi(\lambda^{k}, \lambda^{k+1}) =
	\sum_{\text{positions $p$ at level $k$}} \ssp\sum_{c' > c(p)}
	\bigl(T_{<p}(c') - B_{<p}(c')\bigr),
\end{equation}
where $c(p)$ is the color at position $p$ in level $k$.

For an example, consider the following colored interlacing $4$-triangle of
depth~$2$:
\begin{center}
	\begin{tikzpicture}
		[scale=0.75, dot1/.style={circle, fill=red!80, minimum size=5.5mm, inner
		sep=0pt, font=\small\bfseries}, dot2/.style={circle,
		fill=green!70!black, minimum size=5.5mm, inner sep=0pt,
		font=\small\bfseries, text=white}, dot3/.style={circle, fill=blue!70,
		minimum size=5.5mm, inner sep=0pt, font=\small\bfseries, text=white},
		dot4/.style={circle, fill=orange!70!black, minimum size=5.5mm, inner
		sep=0pt, font=\small\bfseries, text=white}] \def\h{0.8} \def\v{1.05}
		\def\tsep{3.5*\h}
		\node[left] at (-2.5*\h, 0) {$k=1$}; \node[left] at (-2.5*\h, \v)
		{$k=2$}; \node[left] at (14.5*\h, 0) {$\mu$}; \node[left] at (14.5*\h,
		\v) {$\nu$};
		\draw[gray, densely dashed] (-2*\h,0) -- (3*\tsep+1.5*\h,0); \draw[gray,
		densely dashed] (-2*\h,\v) -- (3*\tsep+1.5*\h,\v);
		\begin{scope}[shift={(0,0)}]
			\draw[gray!40, thick] (-\h,\v) -- (\h,\v) -- (0,0) -- cycle;
			\node[dot3] at (-\h,\v) {3}; \node[dot1] at (\h,\v) {1}; \node[dot3]
			at (0,0) {3}; \node at (0,\v+0.7) {$i=1$};
		\end{scope}
		\begin{scope}[shift={(\tsep,0)}]
			\draw[gray!40, thick] (-\h,\v) -- (\h,\v) -- (0,0) -- cycle;
			\node[dot4] at (-\h,\v) {4}; \node[dot2] at (\h,\v) {2}; \node[dot1]
			at (0,0) {1}; \node at (0,\v+0.7) {$i=2$};
		\end{scope}
		\begin{scope}[shift={(2*\tsep,0)}]
			\draw[gray!40, thick] (-\h,\v) -- (\h,\v) -- (0,0) -- cycle;
			\node[dot3] at (-\h,\v) {3}; \node[dot4] at (\h,\v) {4}; \node[dot4]
			at (0,0) {4}; \node at (0,\v+0.7) {$i=3$};
		\end{scope}
		\begin{scope}[shift={(3*\tsep,0)}]
			\draw[gray!40, thick] (-\h,\v) -- (\h,\v) -- (0,0) -- cycle;
			\node[dot1] at (-\h,\v) {1}; \node[dot2] at (\h,\v) {2}; \node[dot2]
			at (0,0) {2}; \node at (0,\v+0.7) {$i=4$};
		\end{scope}
	\end{tikzpicture}
\end{center}
Let us illustrate the computation of $\psi(\mu,\nu)$ via
\eqref{eq:psi_combinatorial}.
For each bottom-row position $p$ with color $c(p)$, we list the pairs
$(T_{<p}(c'),\, B_{<p}(c'))$ for each $c'>c(p)$:
\begin{alignat*}
	{2} c(p_1)=\textcolor{blue!70}{3}:     &\quad c'=4\colon\;(0,0),                                              &\qquad & \text{subtotal } 0;\\
	c(p_2)=\textcolor{red!80}{1}:          &\quad c'=2\colon\;(0,0),\  c'=3\colon\;(1,1),\
	c'=4\colon\;(1,0), &       & \text{subtotal } 1;\\
	c(p_3)=\textcolor{orange!70!black}{4}: &\quad \text{no }c'>4,                                                 &       & \text{subtotal } 0;\\
	c(p_4)=\textcolor{green!70!black}{2}:  &\quad c'=3\colon\;(2,1),\
	c'=4\colon\;(2,1),                      &       & \text{subtotal } 2.
\end{alignat*}
Hence $\psi(\mu,\nu) = 0+1+0+2 = 3$.

Define the \emph{color complement} involution acting on colors by
$\bar{c} = n+1-c$ for each $c \in \{1,\dots,n\}$, and extend it to sequences
entrywise.

\begin{Lemma}[color complement symmetry]
	\label{lem:color_complement} If $\mu = \lambda^k \prec \lambda^{k+1} = \nu$
	are interlacing rows in the sense of
	Definition~{\rm \ref{def:colored_interlacing_triangle}}, then $\bar{\mu} \prec \bar{\nu}$,
	and
	\[ \psi(\mu, \nu) + \psi(\bar{\mu}, \bar{\nu}) =
		k\binom{n}{2}.
	\]
\end{Lemma}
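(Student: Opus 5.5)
The plan is to work entirely with the combinatorial formula \eqref{eq:psi_combinatorial} rather than the vertex model, and to reduce the identity to a per-position count that does not depend on the colors at all.

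\textbf{Interlacing is preserved.} The interlacing condition of Definition~\ref{def:colored_interlacing_triangle} is imposed color by color. The map $c\mapsto\bar c$ is a bijection of $\{1,\dots,n\}$ that does not move any dot in the linear order \eqref{eq:interlacing_order}. Hence the set of positions carrying color $\bar b$ in $(\bar\mu,\bar\nu)$ is exactly the set carrying color $b$ in $(\mu,\nu)$, and both conditions (the counts and the interlacing) transfer verbatim. So $\bar\mu\prec\bar\nu$.

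\textbf{Rewriting $\psi(\bar\mu,\bar\nu)$.} Write $D_p(c')\coloneqq T_{<p}(c')-B_{<p}(c')$ for the original rows. In the complemented rows, the counts for color $\bar c'$ at position $p$ equal $D_p(c')$, and the condition $\bar c'>\overline{c(p)}$ is equivalent to $c'<c(p)$. Therefore
\[
\psi(\mu,\nu)+\psi(\bar\mu,\bar\nu)=\sum_{p}\ \sum_{c'\ne c(p)} D_p(c'),
\]
where $p$ runs over the level-$k$ positions. It then suffices to compute $\sum_{c'}D_p(c')$ over all colors and to subtract the diagonal term $D_p(c(p))$.

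\textbf{The diagonal term.} If $p$ carries color $c$, then by interlacing the color-$c$ dots to the left of $p$ alternate as top, bottom, $\dots$, top. They therefore consist of $j$ top dots and $j-1$ bottom dots for some $j$, which gives $D_p(c(p))=1$.

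\textbf{The full sum over colors.} Summing over all $c'$ gives the total number of level-$(k+1)$ dots left of $p$ minus the total number of level-$k$ dots left of $p$. By \eqref{eq:interlacing_order}, each triangle reads top, bottom, top, $\dots$, bottom, top. So if $p$ is the $j$-th bottom dot of triangle $i$, there are $(i-1)(k+1)+j$ top dots and $(i-1)k+j-1$ bottom dots to its left. Hence $\sum_{c'}D_p(c')=i$, independently of the colors.

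\textbf{Conclusion.} Each triangle contributes $k$ bottom positions, so summing over $p$ gives
\[
\sum_p\sum_{c'\ne c(p)}D_p(c') \;=\; k\sum_{i=1}^n i \;-\; nk \;=\; k\binom{n}{2}.
\]
The only step needing care is the position count $(i-1)(k+1)+j$ versus $(i-1)k+j-1$, which must be read off correctly from the order \eqref{eq:interlacing_order}; no step is expected to be a real obstacle.
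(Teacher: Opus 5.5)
Your proof is correct and is essentially the paper's argument. At each insertion vertex the paper pairs the contributions $\#\{a\in A: a>j\}$ and $\#\{a\in A: a<j\}$ to get $\#A$, using $j\notin A$ (your diagonal term $D_p(c(p))=1$), and then computes $\#A=i-1$ by conservation of arrows; you do the same through \eqref{eq:psi_combinatorial} instead of the vertex model, counting dots to the left of $p$ rather than vertices to its right.
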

\begin{proof}
	The color complement map preserves the interlacing since it is simply a
	permutation of colors.
	Now, consider the vertex model in Definition~\ref{def:psi}.
	Since the arrow configuration for fixed $(\mu,\nu)$ is unique, the bar map
	carries it to the configuration for $(\bar{\mu},\bar{\nu})$, relabeling
	each color~$a$ by~$\bar{a}$ and keeping all positions unchanged.
	Consider a vertex of the first type in~\eqref{eq:psi_factor_two_types_of_vertices} with incoming color $j$, and
	let~$A$ be the set of colors on the horizontal line immediately to its
	right; since the occurrences of~$j$ in $\nu$ and $\mu$ alternate in the
	linear order \eqref{eq:interlacing_order}, we have $j\notin A$.
	This vertex contributes $\# \{ a\in A\mid a>j \}$ to
	$\psi(\mu,\nu)$, while the complemented vertex contributes
	$\# \{ a\in A \mid \bar{a}>\bar{j}  \}
	=\# \{ a\in A\mid a<j  \}$ to $\psi(\bar{\mu},\bar{\nu})$, and the
	two contributions sum to $\#A$.
	If our vertex is the $r$-th vertex of the first type in the $i$-th
	triangle, then strictly to its right the diagram contains
	$(n-i)(k+1)+(k+1-r)$ vertices of the second type and $(n-i)k+(k-r)$
	vertices of the first type; since the line carries all $n$ colors at its
	right edge, the conservation of arrows gives
	$\#A=n+[(n-i)k+(k-r)]-[(n-i)(k+1)+(k+1-r)]=i-1$.
	Summing over the $k$ vertices of the first type in each of the~$n$
	triangles, we obtain
\[
\psi(\mu, \nu) + \psi(\bar{\mu}, \bar{\nu})
	=k\sum_{i=1}^{n}(i-1)= k\binom{n}{2}.\tag*{\qed}
\]\renewcommand{\qed}{}
\end{proof}

In the remainder of this subsection, we specialize to depth $N=2$ and
write $\psi(\boldsymbol\lambda)\coloneqq\psi\bigl(\lambda^1,\lambda^2\bigr)$ for
$\boldsymbol\lambda\in\mathcal{T}_2(n)$.
Index the $n$ triangles by $1,\dots,n$.
The \emph{canonical} configuration $\lambda^{\mathrm{can}}$ has bottom row
$\mathrm{id}$ and, at every triangle $j$, both top dots equal to the color~$j$; from~\eqref{eq:psi_combinatorial} one checks
$\psi(\lambda^{\mathrm{can}})=0$.
The following definition and lemma isolate a gluing mechanism under which
$\psi$ is additive; we use it in Section~\ref{subsec:q_genocchi_case} to show
that $q^{\psi}$ is multiplicative over the interval decomposition of
Definition~\ref{def:indecomposable_triangle}, and in Section~\ref{sec:enumeration} to
bound the coefficients of the $q$-Genocchi medians from below
(Proposition~\ref{prop:leading_lower_bound}).

\begin{Definition}
	\label{def:block_defect}
	Let $B=\{p,p+1,\dots,p+w-1\}$ be a block of $w$ consecutive triangles. A~\emph{defect on $B$} is a valid configuration that agrees with
	$\lambda^{\mathrm{can}}$ on every triangle outside $B$ and is
	\emph{color-balanced} on $B$: for each color $c$, the number of top dots
	of color $c$ on~$B$, and the number of bottom dots of color $c$ on~$B$,
	equal those of $\lambda^{\mathrm{can}}$. In particular the only colors
	occurring on $B$ are its canonical colors $p,\dots,p+w-1$, and these colors
	do not occur outside $B$.
\end{Definition}

Within its block, the bottom row of a defect may be an arbitrary permutation of the colors $p,\dots,p+w-1$.  Defects thus generalize the blocks of the interval decomposition of Definition~\ref{def:indecomposable_triangle}, whose bottom rows are the identity permutations; this extra generality is used in Section~\ref{sec:enumeration}.

\begin{Lemma}\label{lem:defect_additivity}
	Let $B_1,\dots,B_k$ be pairwise disjoint blocks and, for each $r$, let
	$\Lambda_r$ be a defect on~$B_r$. Let $\Lambda$ be the configuration that
	agrees with $\Lambda_r$ on each $B_r$ and with $\lambda^{\mathrm{can}}$
	elsewhere. Then $\Lambda$ is a~valid colored interlacing triangle and
	\begin{equation}
		\label{eq:defect_additivity}
		\psi(\Lambda)=\sum_{r=1}^{k}\psi(\Lambda_r).
	\end{equation}
	Moreover, each $\psi(\Lambda_r)$ equals the $\psi$-value of the content
	of $\Lambda_r$ on its block, viewed as an element of
	$\mathcal{T}_2(w_r)$ after shifting the colors to $\{1,\dots,w_r\}$,
	where $w_r$ is the width of $B_r$.
\end{Lemma}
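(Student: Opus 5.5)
We split the argument into validity, additivity, and reduction to width $w_r$, and in all three we work with the combinatorial formula \eqref{eq:psi_combinatorial} rather than the vertex model.

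\emph{Validity.} Each color $c$ lives entirely inside one region: either inside the block $B_r$ containing its canonical triangle, or inside triangle $c$ itself when $c$ lies outside all blocks. So the interlacing condition for $c$ involves only dots in that region. In the linear order \eqref{eq:interlacing_order}, the relative order of those dots is the same in $\Lambda$ as in $\Lambda_r$ (resp.\ $\lambda^{\mathrm{can}}$), since inserting other triangles between regions does not change relative order within a region. Hence interlacing for $c$ in $\Lambda$ is inherited from $\Lambda_r$ or $\lambda^{\mathrm{can}}$. The count condition (two top dots and one bottom dot per color) holds by color balance.

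\emph{Additivity.} The key observation is that $T_{<p}(c')-B_{<p}(c')$ vanishes whenever $p$ does not lie strictly inside the region of $c'$. If $p$ is to the left of that region, both counts are $0$. If $p$ is to the right, both counts are complete, and the difference is $2-1=1$.

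This second case is the main obstacle: a nonzero contribution from a color $c'>c(p)$ whose region is entirely to the left of $p$. I handle it as follows. For a bottom position $p$ in a triangle $t$, a fully-left color $c'$ has canonical index $<t$. If $t$ lies in block $B_r$, then $c(p)\ge \min B_r > c'$. If $t$ is outside all blocks, then $c(p)=t>c'$. In both cases $c'<c(p)$, so these colors never enter the sum in \eqref{eq:psi_combinatorial}. This is exactly why color balance, forcing blocks to carry their canonical colors, is needed.

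It follows that only colors whose region contains $p$ contribute. For $p$ outside all blocks, that region is triangle $p$ itself with the single color $c(p)$, giving contribution $0$. For $p\in B_r$, only colors of $B_r$ contribute, and their counts $T_{<p},B_{<p}$ are the same as in $\Lambda_r$ viewed alone. Summing over $r$, and using the same reasoning for $\Lambda_r$ alone with $k=1$ (its positions outside $B_r$ contribute $0$), gives \eqref{eq:defect_additivity}.

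\emph{Reduction to width $w_r$.} Deleting the canonical triangles outside $B_r$ does not change the counts $T_{<p}(c')$ and $B_{<p}(c')$ for $c'\in B_r$ and $p\in B_r$. The color shift $c\mapsto c-p+1$ is order-preserving, so it preserves the condition $c'>c(p)$. Hence $\psi(\Lambda_r)$ equals the $\psi$-value of the block content as an element of $\mathcal T_2(w_r)$.
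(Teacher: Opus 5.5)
Your proof is correct and follows the paper's argument in all essentials: validity comes from the locality of each color's three dots, and additivity from a case split of each $c'>c(p)$ in \eqref{eq:psi_combinatorial} by where its region lies relative to $p$. The only variation is in blocks to the left of $p$: the paper handles them by color balance, replacing them with canonical content, and uses the ordering $c'<c(p)$ only in the final width-$w_r$ reduction, whereas you use that ordering directly, so one observation covers both additivity and the reduction. Your opening claim that $T_{<p}(c')-B_{<p}(c')$ vanishes whenever $p$ lies outside the region of $c'$ is misstated (the left case gives $1$, as you note in the next sentence), but the argument that follows is sound.
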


\begin{proof}
	Interlacing is imposed separately on each color: the two
	top dots of a color must straddle its bottom dot in the linear order
	\eqref{eq:interlacing_order}. By Definition~\ref{def:block_defect}, every color $c$ has
	all three of its dots either inside a single block $B_r$ (when $c$ is one of
	its colors) or entirely in the canonical region. In the first case $\Lambda$
	coincides near $c$ with the valid configuration $\Lambda_r$, in the second
	with $\lambda^{\mathrm{can}}$; in either case the interlacing constraint for
	$c$ holds. Hence $\Lambda$ is valid.

	Now, for a configuration $\Theta$ and a bottom position $p$
	(the bottom of triangle $p$) write
\[
f_\Theta(p)=\sum_{c'>c(p)} (T_{<p}(c')-B_{<p}(c') ),
		\qquad\text{so that}\quad \psi(\Theta)=\sum_{p}f_\Theta(p)
\]
	by \eqref{eq:psi_combinatorial}, where $c(p)$, $T_{<p}(c')$, $B_{<p}(c')$
	depend only on the part of $\Theta$ weakly to the left of $p$. A~direct
	evaluation gives $f_{\lambda^{\mathrm{can}}}(p)=0$ for every $p$, so
	$\psi(\lambda^{\mathrm{can}})=0$.

	Fix a bottom position $p$ and consider $\Lambda$. Any block $B_s$ lying
	entirely to the \emph{right} of $p$ contributes nothing to $T_{<p}(c')$ or
	$B_{<p}(c')$. Any block $B_s$ lying entirely to the \emph{left} of $p$ is
	color-balanced, so it contributes to each of $T_{<p}(c')$ and $B_{<p}(c')$
	exactly its canonical amount; replacing its content by the canonical one
	leaves $T_{<p}(c')$ and $B_{<p}(c')$ unchanged. Since the blocks are disjoint,
	$p$ meets at most one block $B_r$, and the data entering $f_\bullet(p)$ to the
	left of $p$ within $B_r$ is the same in $\Lambda$ and in $\Lambda_r$.
	Therefore,
	\[
		f_{\Lambda}(p)=f_{\Lambda_r}(p)\ \ (p\in B_r),
		\qquad
		f_{\Lambda}(p)=f_{\lambda^{\mathrm{can}}}(p)=0\ \ (p\notin\textstyle\bigcup_r B_r).
	\]
	The identical argument applied to each $\Lambda_r$ (canonical off $B_r$)
	gives $\psi(\Lambda_r)=\sum_{p\in B_r}f_{\Lambda_r}(p)$. Summing
	$f_{\Lambda}$ over all $p$ yields \eqref{eq:defect_additivity}.

	Finally, for $p\in B_r$ the summand $f_{\Lambda_r}(p)$ involves only
	the colors of $B_r$: the canonical colors to the left of $B_r$ are
	smaller than $c(p)$ and never enter the sum, while those to the right
	have $T_{<p}(c')=B_{<p}(c')=0$. Since shifting the colors to
	$\{1,\dots,w_r\}$ preserves their relative order, $\psi(\Lambda_r)$
	equals the $\psi$-value of the block content.
\end{proof}

\subsection[q-counting polynomial]{$\boldsymbol{q}$-counting polynomial}
\label{subsec:q_counting_polynomial}

We now define a $q$-deformation of the enumeration of colored interlacing
triangles $T_N(n)$.

\begin{Definition}
	\label{def:q_polynomial} For a colored interlacing $n$-triangle
	$\boldsymbol\lambda$ of depth $N$, define the \emph{total $\psi$-weight}
	\begin{equation*}
		\psi(\boldsymbol\lambda) \coloneqq \sum_{k=1}^{N-1} \psi\bigl(\lambda^k,
		\lambda^{k+1}\bigr).
	\end{equation*}
	The \emph{$q$-counting polynomial} is
	\begin{equation*}
		T_N(n;q) \coloneqq \sum_{\boldsymbol\lambda \in \mathcal{T}_N(n)}
		q^{\psi(\boldsymbol\lambda)}.
	\end{equation*}
	Clearly, at $q=1$, we have $T_N(n;1) = T_N(n) = \#\mathcal{T}_N(n)$.
\end{Definition}

\begin{Proposition}
	\label{prop:q_polynomial_properties} The polynomial $T_N(n;q)$ has the
	following properties:
	\begin{enumerate}[\rm (i)]\itemsep=0pt
		\item $\deg T_N(n;q) = \binom{n}{2} \cdot \binom{N}{2}$.
		\item $T_N(n;q) = q^{\binom{n}{2}\binom{N}{2}} T_N(n;1/q)$.
		\item For $N\ge2$, we have $2^{n-1} \mid T_N(n;q)$ as a polynomial
		(that is, all coefficients are divisible by $2^{n-1}$).
	\end{enumerate}
\end{Proposition}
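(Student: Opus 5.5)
Part (ii) comes first, since it yields (i) almost immediately. Extend the color complement $c\mapsto\bar c=n+1-c$ entrywise to whole triangles, $\boldsymbol\lambda\mapsto\bar{\boldsymbol\lambda}$. It only permutes colors, so it is an involution on $\mathcal{T}_N(n)$ preserving both conditions of Definition~\ref{def:colored_interlacing_triangle}. Applying Lemma~\ref{lem:color_complement} to each consecutive pair of levels and summing over $k=1,\dots,N-1$ gives
\[
\psi(\boldsymbol\lambda)+\psi(\bar{\boldsymbol\lambda})=\binom{n}{2}\sum_{k=1}^{N-1}k=\binom{n}{2}\binom{N}{2}.
\]
Reindexing the sum defining $T_N(n;q)$ by $\boldsymbol\lambda\mapsto\bar{\boldsymbol\lambda}$ then gives $T_N(n;q)=\sum_{\boldsymbol\lambda}q^{\binom n2\binom N2-\psi(\boldsymbol\lambda)}=q^{\binom n2\binom N2}T_N(n;1/q)$, which is (ii).

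For (i), each $\psi(\lambda^k,\lambda^{k+1})\ge0$, since it counts arrows in Definition~\ref{def:psi}. So the minimal exponent in $T_N(n;q)$ is at least $0$, and by the symmetry the maximal exponent is at most $\binom n2\binom N2$. For equality it suffices to exhibit one triangle with $\psi=0$; its complement then attains the maximum. The canonical configuration generalizes to any depth: every dot of triangle $i$ gets color $i$. This is interlacing, since each color lives in its own triangle, which is a single-color GUE-type interlacing. Via \eqref{eq:psi_combinatorial}, $\psi=0$ at every level, because for a bottom dot of color $i$ all colors $c'>i$ lie entirely to the right, so $T_{<p}(c')=B_{<p}(c')=0$. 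Hence the coefficient of $q^{\binom n2\binom N2}$ is nonzero and the degree is exactly as claimed. The only care needed is in checking this canonical $\psi=0$ claim at general $k$.

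For (iii), I would $q$-refine the argument of Proposition~\ref{Prop2.3}. First write $T_N(n;q)=\sum_{\pi\in S_n}\sum_{\boldsymbol\lambda:\,\lambda^1=\pi}q^{\psi(\boldsymbol\lambda)}$. Then, for each fixed bottom row $\lambda^1$, and in fact for fixed rows $\lambda^1,\dots,\lambda^{N-1}$, I would use the $n-1$ commuting involutions swapping the adjacent top-level entries $\lambda[i]^N_N$ and $\lambda[i+1]^N_1$. These act only on level $N$, and no level-$(N-1)$ dot lies between the swapped entries, so validity is preserved and the entries are distinct. The group $(\mathbb Z/2)^{n-1}$ therefore acts freely, and its orbits have size $2^{n-1}$.

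The key step, and the main obstacle, is showing that $\psi$ is constant on orbits. The only term affected is $\psi(\lambda^{N-1},\lambda^N)$. In \eqref{eq:psi_combinatorial}, each summand uses $T_{<p}(c')$ for bottom positions $p$ at level $N-1$. No such $p$ lies strictly between the two swapped top positions. Hence for every $p$ both swapped entries are either to the left of $p$ or to the right of it, and the counts $T_{<p}(c')$ are unchanged. The $B_{<p}$ terms and the colors $c(p)$ are also untouched, so $\psi$ is invariant. Each orbit therefore contributes $2^{n-1}q^{\psi}$, and all coefficients of $T_N(n;q)$ are divisible by $2^{n-1}$.
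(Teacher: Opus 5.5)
Your proof is correct. Parts (ii) and (iii) follow the paper's argument. For (ii), both use the color-complement involution with Lemma~\ref{lem:color_complement} summed over levels. For (iii), both use the $n-1$ top-row swaps of Proposition~\ref{Prop2.3}, which preserve $\psi$ because no level-$(N-1)$ dot separates the swapped entries.

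For (i) your route differs. The paper bounds each level directly. A level-$k$ vertex of color $j$ can have at most the $n-j$ larger colors active over it, and each color has $k$ such vertices, so $\psi(\lambda^k,\lambda^{k+1})\le k\binom{n}{2}$. Summing over $k$ gives the degree bound. The paper says the bound is attained when all larger colors are active at every vertex, but it never exhibits a triangle that achieves this at all levels at once.

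You instead prove (ii) first and get the upper bound from $\psi\ge 0$ together with the symmetry. You then get attainment from an explicit witness. The depth-$N$ canonical triangle has $\psi=0$, so its complement, where the $i$-th triangle is entirely of color $n+1-i$, attains $\binom{n}{2}\binom{N}{2}$.

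Your version makes sharpness of the degree fully explicit and simultaneous across levels, which the paper leaves implicit. The paper's per-vertex count has the advantage of giving the upper bound directly, without going through the complement symmetry.
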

\begin{proof}
(i) The maximum value of $\psi(\lambda^k, \lambda^{k+1})$ is $k\binom{n}{2}$
		(achieved when all colors $j' > j$ are active at each level-$k$ vertex
		of color $j$).
		Summing over $k=1,\dots,N-1$ gives
		$\sum_{k=1}^{N-1} k\binom{n}{2} = \binom{n}{2}\binom{N}{2}$.

(ii) The color complement involution $\bar{\boldsymbol\lambda}$
		is a bijection on $\mathcal{T}_N(n)$, and by
		Lemma~\ref{lem:color_complement}, we have
		$\psi(\boldsymbol\lambda) + \psi(\bar{\boldsymbol\lambda}) = \binom{n}{2}\binom{N}{2}$.
		This implies the palindromic property of $T_N(n;q)$.

(iii)
The $n-1$
independent involutions from the proof of Proposition~\ref{Prop2.3} preserve
the $\psi$-statistic. Indeed, since the two swapped top-level
entries are adjacent in the linear order~\eqref{eq:interlacing_order}, with no
level-$(N-1)$ entry between them, the swap does not change the
top-level colors lying to the left of any level-$(N-1)$ position,
and hence preserves the $\psi$-statistic.
Thus, all coefficients of $T_N(n;q)$ inherit the divisibility by
		$2^{n-1}$.
\end{proof}

\subsection[q-enumeration in the Genocchi case]{$\boldsymbol{q}$-enumeration in the Genocchi case}
\label{subsec:q_genocchi_case}

For depth $N=2$, the polynomial $T_2(n;q)$ is a $q$-analog of the quantity
$T_2(n) = n! \cdot H_n$ from Theorem~\ref{thm:Genocchi_counts}.
By Proposition~\ref{prop:q_polynomial_properties}, $T_2(n;q)$ is palindromic of degree
$\binom{n}{2}$ and divisible by $2^{n-1}$.
The first few values are
\begin{align*}
	&T_2(1;q)= 1, \\
	&T_2(2;q)= 2(1+q), \\
	&T_2(3;q)= 2^2\bigl(1+5q+5q^2+q^3\bigr), \\
	&T_2(4;q)= 2^3\bigl(1+10q+47q^2+52q^3+47q^4+10q^5+q^6\bigr), \\
	 &T_2(5;q)=
	2^4\bigl(1+15q+132q^2+527q^3+1019q^4+1172q^5+\textnormal{palindrome}\bigr).
\end{align*}
See also Section~\ref{subsec:q_data} for further values.
We notice the linear growth of the coefficient of the first power of~$q$.

\begin{Proposition}[linear coefficient]\label{prop:linear_coefficient} Writing
	$T_2(n;q) = 2^{n-1}(1 + a_1 q + \cdots)$, the linear coefficient satisfies
	$a_1 = 5(n-2)$ for $n \ge 3$.
\end{Proposition}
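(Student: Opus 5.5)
The plan is to classify all $\boldsymbol\lambda\in\mathcal{T}_2(n)$ with $\psi(\boldsymbol\lambda)\le1$. The goal is to show that each one is canonical, up to relabeling and the swap involutions, outside a window of at most three consecutive triangles; then Lemma~\ref{lem:defect_additivity} reduces the count to a finite computation. Since the $n-1$ swap involutions from the proof of Proposition~\ref{Prop2.3} preserve $\psi$ (Proposition~\ref{prop:q_polynomial_properties}(iii)), I will count orbits under these involutions. Each orbit has exactly $2^{n-1}$ elements, so $a_j$ is the number of orbits with $\psi=j$. The first step is the constant term. Using \eqref{eq:psi_combinatorial}, I will show that $\psi=0$ forces the bottom row to be the identity: a descent in the bottom row makes the larger color sit to the left of the smaller one, and I expect this to produce a positive contribution $T_{<p}(c')-B_{<p}(c')>0$ at the smaller color. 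I will then show that $\psi=0$ with identity bottom row forces the canonical top row up to swaps. This gives $a_0=1$ and fixes the normalization.

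The second step, which I expect to be the main obstacle, is a localization lemma for $\psi=1$. I will read $f(p)=\sum_{c'>c(p)}(T_{<p}(c')-B_{<p}(c'))$ as the number of ``open'' larger colors passing over the bottom dot $p$. Interlacing makes every summand nonnegative, since each color has at least as many top dots as bottom dots to the left of any bottom position. The claim to prove is that any deviation from canonical, whether a non-identity bottom row or a top dot of color $c$ placed outside triangle $c$'s neighbourhood, forces at least one unit of $f$ at some $p$. Moreover, a deviation spanning a range of more than three triangles forces at least two units, one near each end of the range.

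For the deviation-range claim I would first try to track the leftmost and rightmost triangles where the configuration differs from the canonical one (modulo swaps). At each of these two ends I would exhibit a bottom dot whose $f$-value is positive, because a color is displaced across that dot. Once localization holds, every $\psi=1$ orbit is a defect in the sense of Definition~\ref{def:block_defect} on a block of width $2$ or $3$. If the color-balance condition is not automatic, I will first prove it: an unbalanced block should force an arc crossing its boundary, and that arc adds to $f$.

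The final step is the count. By Lemma~\ref{lem:defect_additivity}, the value of $\psi$ depends only on the block content. It remains to enumerate, in $\mathcal{T}_2(2)$ and $\mathcal{T}_2(3)$ (with arbitrary bottom rows within the block), the orbits of $\psi=1$ defects that are not themselves a smaller defect padded by canonical triangles. From $T_2(2;q)=2(1+q)$ and $T_2(3;q)=4(1+5q+\cdots)$ I get $a_1=1$ for $n=2$ and $a_1=5$ for $n=3$. If the primitive width-$2$ defects number $\alpha$ and the primitive width-$3$ ones number $\beta$, then $a_1(n)=\alpha(n-1)+\beta(n-2)$. I must be careful about boundary effects: some local defects may only be placeable when they are not at the left or right edge. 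The claim $5(n-2)$ would then follow by checking the per-position count against $n=3,4$, namely $5$ and $10$. Reconciling the primitive local types and their admissible positions with the linear rate $5$ is the delicate bookkeeping. I would verify it by hand-listing the width-$\le3$ local pictures with $f$ summing to $1$ and cross-checking against $T_2(4;q)$.
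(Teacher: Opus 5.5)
Your localization lemma is false, and the count you build on it fails with it. The obstruction is the paper's own width-$4$ defect $D$ in \eqref{eq:five_defects}. For $n=4$, take bottom row $(1,3,2,4)$ and top dots $(1,2),(3,1),(4,3),(2,4)$, that is, $(1|1|2\,\|\,3|3|1\,\|\,4|2|3\,\|\,2|4|4)$ in the order \eqref{eq:interlacing_order}. Each color straddles its bottom dot, and the values $f(p)$ from \eqref{eq:psi_combinatorial} are $0,0,1,0$, so $\psi=1$. No element of its orbit under the top-row involutions agrees with $\lambda^{\mathrm{can}}$ at any of the four triangles; for example, the first triangle always has top dots $1$ and $2$ or $3$. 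So a deviation spanning four triangles costs one unit, contradicting your claim that spans longer than three cost two.

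Your heuristic ``one unit at each end'' breaks because an open color $c'$ at a bottom dot $p$ counts only when $c'>c(p)$. Here color $2$, whose left top dot sits in the first triangle, is open over the bottom dot of color $3$ at no cost. The only unit comes from color $4$ being open over the bottom dot of color $2$. The same example also defeats the mechanism of your first step: at the descent $3,2$, the term for the larger color $3$ at the dot of color $2$ is $T_{<p}(3)-B_{<p}(3)=1-1=0$.

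Consequently $a_1(n)=\alpha(n-1)+\beta(n-2)$ cannot be right. The case $n=2$ forces $\alpha=1$ (type $S$), and then $n=3$ forces $\beta=3$ (types $A,B,C$), predicting $a_1(4)=9$ instead of $10$. Calibrating on $n=3,4$ instead gives $\alpha=0$, $\beta=5$, contradicting the existence of $S$; fitting data would not be a proof in any case. The true picture is one primitive defect of width $2$, three of width $3$ and one of width $4$, giving $(n-1)+3(n-2)+(n-3)=5(n-2)$.

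Even with the window enlarged to four, the step you call the main obstacle (longer deviations cost at least two units) would still need a real proof. The paper avoids any window analysis. Let $c_i$ be the contribution of the $i$-th bottom dot. By conservation of arrows (proof of Lemma~\ref{lem:color_complement}), the active colors there together with the inserted color form exactly $i$ distinct colors. Of these, $i-c_i$ are at most $\sigma_i$, so $\sigma_i\ge i-c_i$. With $\sum_i c_i\le 1$ and $\sum_i\sigma_i=\sum_i i$, this forces $\sigma$ to be the identity or a single adjacent transposition. When $\psi=0$ it forces the identity, which gives your first step cleanly. A finite local analysis per bottom row then gives $n-2$ orbits for the identity, $2$ for each of $(1,2)$ and $(n-1,n)$, and $4$ for each internal transposition.
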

\begin{proof}
	We need to count the number of depth-2 colored interlacing triangles with
	$\psi = 1$.
	Let $\lambda^1=\sigma\in S_n$ denote the permutation in the bottom row.
	We claim that the number of inversions of $\sigma$ must be either $0$ or $1$
	to have $\psi = 1$.
	Modulo this claim, let us first finish the computation of $a_1$ for $n\ge3$.
	We have the following contributions (normalized by~$2^{n-1}$, the size of each orbit under the
top-row involutions, see Proposition~\ref{Prop2.3}):{\samepage
	\begin{itemize}\itemsep=0pt
		\item
		When $\sigma=\mathrm{id}$ (no inversions), we can achieve $\psi=1$ by
		choosing one of the middle $n-2$ bottom positions $c=2,\dots,n-1$ to
		contribute $1$ to $\psi$.
		This implies that the color crossing this position must be exactly
		$c+1$, and forces the configuration, up to top-row involutions.
		\item
		When $\sigma$ is a boundary transposition $(1,2)$ or $(n-1,n)$, there
		are two ways to achieve $\psi=1$ (up to top-row involutions).
		Indeed, for example, the triangle with $\sigma=(2,1,3,4,\dots,n) $ may
		have top row either $(2| 12| 13| 34| 45| \dots) $, or
		$(2| 13| 12| 34| 45| \dots) $, where the vertical bars indicate where
		the bottom row entries are inserted, and two entries between bars are
		unordered (due to top-row involutions).
		\item
		Finally, when $\sigma$ is one of the $n-3$ internal transpositions
		$(i,i+1)$, $2\le i \le n-2$, one similarly sees that there are four ways
		to achieve $\psi=1$ (up to top-row involutions).
	\end{itemize}
	The final count is $n-2+4+4(n-3)=5(n-2)$, as claimed.}

	It remains to prove the initial claim that the number of inversions in
	$\sigma$ is at most $1$ when $\psi=1$.
	For $i=1,\dots,n $, let $c_i$ denote the contribution of the $i$-th
	bottom dot to $\psi$ in Definition~\ref{def:psi}, so that $\psi=\sum_{i=1}^{n}c_i$.
	The conservation-of-arrows count in the proof of
	Lemma~\ref{lem:color_complement} shows that the set of colors active at this
	dot, together with the inserted color $\sigma_i$, has exactly $i$
	elements.
	The $i-c_i$ of them not exceeding $\sigma_i$ are distinct colors from
	$\{1,\dots,\sigma_i\}$, whence $\sigma_i \ge i-c_i$ for all $i$.
	If $\psi\le1$, then $c_i=0$ for all $i$ except at most one position $p$
	with $c_p=1$, so that $\sigma_i\ge i$ for $i\ne p$ and
	$\sigma_p\ge p-1$.
	Since $\sum_{i}\sigma_i=\sum_{i}i$, either $\sigma=\mathrm{id}$, or
	$\sigma_p=p-1$, $\sigma_q=q+1$ for exactly one $q\ne p$, and
	$\sigma_i=i$ for all other $i$.
	In the latter case, comparing the values $\{p-1,q+1\}$ with the values
	$\{p,q\}$ missing from the fixed positions forces $q=p-1$, so $\sigma$
	is the adjacent transposition of $p-1$ and $p$.
	In both cases $\mathrm{inv}(\sigma)\le1$, proving the claim.

	Finally, let us justify writing $T_2(n;q)=2^{n-1}(1+a_1q+\cdots)$, that
	is, that the constant term of $T_2(n;q)$ is exactly $2^{n-1}$
	(Proposition~\ref{prop:q_polynomial_properties} yields only its divisibility by
	$2^{n-1}$).
	If $\psi=0$, then all $c_i=0$, and the inequality above gives
	$\sigma=\mathrm{id}$.
	By \eqref{eq:Hid_Sfrac} below
	(which does not rely on the present statement),
	the generating function of the triangles
	with identity bottom row weighted by $q^{\psi}$ specializes at $q=0$
	(where $\alpha_1=\alpha_2=1$ and all other $\alpha_j$ vanish) to
	$\frac{1-x}{1-2x}=1+\sum_{n\ge1}2^{n-1}x^{n}$.
	Hence there are exactly $2^{n-1}$ triangles with $\psi=0$, as desired.
	This completes the proof.
\end{proof}

In Section~\ref{subsec:conjectures_on_coefficients}, we conjecture expressions
for the next few coefficients based on the computed values of $T_2(n;q)$.
\begin{Remark}
	\label{rem:psi_vs_inv} The proof of Proposition~\ref{prop:linear_coefficient} shows
	that $\psi \ge 2$ whenever $\mathrm{inv}(\sigma) \ge 2$, but the inequality
	$\psi \ge \mathrm{inv}(\sigma)$ does \emph{not} hold in general.
	For example, with $n=3$ and $\sigma = (3,2,1)$, we have
	$\mathrm{inv}(\sigma) = 3$, yet the top row $(3|12|23|1)$ gives $\psi = 2$.
	See also Lemma~\ref{lem:psi_crossterm} below.
\end{Remark}

The polynomials $T_2(n;q)$ are $q$-analogs of $n! \cdot H_n$, and refining them
over the bottom row yields a $q$-analog of the Genocchi median $H_n$ for each
permutation $\sigma\in S_n$.

\begin{Definition}
	\label{def:H_sigma} For any permutation $\sigma \in S_n$, the
	\emph{$q$-Genocchi median conditioned on $\sigma$} is given~by
	\begin{equation*}
		H_n^\sigma(q) \coloneqq \sum_{\boldsymbol\lambda \in
		\mathcal{T}_2(n),\ \lambda^1 = \sigma} q^{\psi(\lambda^1, \lambda^2)}.
	\end{equation*}
\end{Definition}

For example, for the identity permutation $\mathrm{id} = (1,2,\dots,n)$, we
get
\begin{align*}
	&H_1^{\mathrm{id}}(q)= 1, \\
	&H_2^{\mathrm{id}}(q)= 2, \\
	&H_3^{\mathrm{id}}(q)= 4(1+q), \\
	&H_4^{\mathrm{id}}(q)= 8\bigl(1+  2q + 4q^2\bigr), \\
	&H_5^{\mathrm{id}}(q)= 16\bigl(1 + 3q + 9q^2 + 16q^3 + 9q^4\bigr).
\end{align*}

\begin{Proposition}\label{prop:H_sigma_properties} The polynomials $H_n^\sigma(q)$ have the
	following properties:
	\begin{enumerate}[\rm (i)]\itemsep=0pt
		\item $2^{n-1} \mid H_n^\sigma(q)$ for all $\sigma \in S_n$.
		\item
		$H_n^{\bar\sigma}(q) = q^{\binom{n}{2}} H_n^\sigma(1/q)$, where
		$\bar\sigma$ denotes the color complement
		$\bar\sigma(i) = n+1-\sigma(i)$.
		\item $H_n^{\sigma}(1)=H_n$, the $n$-th Genocchi median, for all $\sigma \in S_n$.
	\end{enumerate}
\end{Proposition}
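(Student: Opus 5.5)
The three properties follow by restricting earlier constructions to a fixed bottom row $\sigma$; the key point each time is that the relevant map preserves the bottom row, or sends it to $\bar\sigma$.

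For (i), we reuse the $n-1$ involutions from the proof of Proposition~\ref{Prop2.3}, specialized to $N=2$. The $i$-th involution swaps the adjacent top entries $\lambda[i]^2_2$ and $\lambda[i+1]^2_1$. These two entries are distinct, and no level-$1$ entry lies between them in the linear order \eqref{eq:interlacing_order}. Hence each swap leaves the bottom row $\lambda^1=\sigma$ unchanged, preserves interlacing, and generates a free action of $(\mathbb Z/2)^{n-1}$ on $\{\boldsymbol\lambda\in\mathcal T_2(n)\colon \lambda^1=\sigma\}$. The proof of Proposition~\ref{prop:q_polynomial_properties}(iii) already shows that $\psi$ is invariant under each swap, since the swap does not change which top-level colors lie to the left of any level-$1$ position. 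So every orbit has size $2^{n-1}$ and constant $\psi$, and $H_n^\sigma(q)$ is $2^{n-1}$ times a polynomial with nonnegative integer coefficients.

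For (ii), we use the color complement involution $\boldsymbol\lambda\mapsto\bar{\boldsymbol\lambda}$. It is a bijection of $\mathcal T_2(n)$ that sends triangles with bottom row $\sigma$ exactly onto those with bottom row $\bar\sigma$. Lemma~\ref{lem:color_complement} with $k=1$ gives $\psi(\bar{\boldsymbol\lambda})=\binom n2-\psi(\boldsymbol\lambda)$. Summing $q^{\psi(\bar{\boldsymbol\lambda})}$ over $\lambda^1=\sigma$ then yields $H_n^{\bar\sigma}(q)=q^{\binom n2}H_n^\sigma(1/q)$.

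For (iii), we relabel colors by $\sigma^{-1}$, that is, apply the color $c$ to $\sigma^{-1}(c)$ entrywise. This is a bijection between triangles with bottom row $\sigma$ and triangles with identity bottom row. It preserves validity, because interlacing is a per-color condition that is invariant under simultaneous permutation of colors; this is the observation behind \eqref{eq:fixed_bottom_row}. Therefore $H_n^\sigma(1)=\#\mathring{\mathcal T}_2(n)$, which equals $H_n$ by the proof of Theorem~\ref{thm:Genocchi_counts}.

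No step is a genuine obstacle. The one point needing care is (i): we must confirm that the swap involutions fix the whole bottom row and not merely the set of triangles, and that they act freely. Both hold because the swapped entries sit at level $N=2$ and are distinct colors.
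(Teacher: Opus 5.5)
Your proposal is correct and follows the paper's proof: the $n-1$ top-row swaps from Proposition~\ref{Prop2.3} fix the bottom row and $\psi$ for (i), the color complement together with Lemma~\ref{lem:color_complement} at $k=1$ gives (ii), and relabeling the colors by $\sigma^{-1}$ reduces (iii) to $\#\mathring{\mathcal{T}}_2(n)=H_n$. You add a bit more detail than the paper, namely commutativity and freeness of the $(\mathbb{Z}/2)^{n-1}$ action, but the argument is the same.
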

\begin{proof}
(i) The $n-1$ independent involutions from Proposition~\ref{Prop2.3}
		preserve both the bottom row and the $\psi$-statistic.

(ii) By Lemma~\ref{lem:color_complement}, the color complement bijection
		$\boldsymbol\lambda \mapsto \bar{\boldsymbol\lambda}$ maps triangles
		with bottom row $\sigma$ to triangles with bottom row $\bar\sigma$, and
		satisfies
		$\psi(\bar{\boldsymbol\lambda}) = \binom{n}{2} - \psi(\boldsymbol\lambda)$.

(iii) When $q=1$, we are just counting the number of colored interlacing triangles
		with a fixed bottom row $\sigma$, and permutations of colors do not
		affect the interlacing constraints.
\end{proof}

\begin{Remark}
	\label{rem:inverse_not_palindromic} The inversion map
	$\sigma \mapsto \sigma^{-1}$ does \emph{not} induce a relation between
	$H_n^\sigma(q)$ and $H_n^{\sigma^{-1}}(q)$ similar to the color complement.
	For instance, $\sigma = (1,3,4,2)$ has inverse $\sigma^{-1} = (1,4,2,3)$,
	and $H_4^{(1,3,4,2)}(q) = 8\bigl(4q^2 + 3q^3\bigr)$ while
	$H_4^{(1,4,2,3)}(q) = 8\bigl(5q^2 + 2q^3\bigr)$.
\end{Remark}

\begin{Remark}
	\label{rem:H_sigma_not_known} One can check that the polynomials
	$H_n^\sigma(q)$ do not coincide with the known $q$-analogs of Genocchi
	medians from Section~\ref{subsec:known_q_analogs}.
	For instance, the polynomials $H_n^\sigma(q)$ are divisible by $2^{n-1}$,
	while the ones from Section~\ref{subsec:known_q_analogs} instead contain certain
	powers of $(1+q)$.
	Even removing these prefactors, the resulting polynomials do not generally
	match for $n\ge4$.
\end{Remark}

\subsection[Interval decomposition of the q-enumeration]{Interval decomposition of the $\boldsymbol{q}$-enumeration}
\label{subsec:q_interval_decomposition}

We now show that the statistic $\psi$ respects the interval decomposition of
Definition~\ref{def:indecomposable_triangle}.
We first treat the identity bottom row, where $\psi$ is simply additive
over the blocks, and then arbitrary bottom rows, where the additivity
acquires an explicit inversion cross-term.

\begin{Proposition}
	\label{prop:psi_multiplicative}
	For the identity bottom row,
	the statistic $\psi$ is additive over the interval decomposition of
	Definition~{\rm \ref{def:indecomposable_triangle}}.
	Namely, if
	$\boldsymbol\lambda\in\mathring{\mathcal{T}}_2(n)$ has blocks
	$\boldsymbol\lambda_1,\dots,\boldsymbol\lambda_k$, then
	$\psi(\boldsymbol\lambda)
	=\psi(\boldsymbol\lambda_1)+\cdots+\psi(\boldsymbol\lambda_k)$.
	Consequently, denoting by $\mathrm{Irr}_n(q)$ the sum of $q^{\psi}$
	over the indecomposable elements of $\mathring{\mathcal{T}}_2(n)$, and
	setting $H_0^{\mathrm{id}}(q)\coloneqq1$, we have
	\begin{equation}
		\label{eq:first_return_identity_bottom}
		\sum_{n\ge0} H_n^{\mathrm{id}}(q)\ssp x^n =
		\biggl(1-\sum_{j\ge1} \mathrm{Irr}_j(q)\ssp x^j\biggr)^{-1}.
	\end{equation}
\end{Proposition}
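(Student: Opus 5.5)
The plan is to realize the interval decomposition as an instance of the defect gluing of Lemma~\ref{lem:defect_additivity}. Let $\boldsymbol\lambda\in\mathring{\mathcal{T}}_2(n)$ have blocks $B_1,\dots,B_k$, which are consecutive intervals of triangles covering $\{1,\dots,n\}$. For each $r$, let $\Lambda_r$ be the configuration that agrees with $\boldsymbol\lambda$ on $B_r$ and with $\lambda^{\mathrm{can}}$ elsewhere.

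First we check that $\Lambda_r$ is a defect on $B_r$ in the sense of Definition~\ref{def:block_defect}. By Definition~\ref{def:indecomposable_triangle}, no color occurs both inside and outside $B_r$. The bottom row is the identity, so the bottom dots on $B_r$ carry exactly the canonical colors $p,\dots,p+w-1$ of $B_r$. The top dots of these colors (two each) therefore all lie in $B_r$, and by counting ($2w$ top slots) no other color appears there. Hence $\Lambda_r$ is color-balanced. It is valid: colors of $B_r$ interlace as in $\boldsymbol\lambda$, and all other colors are canonical.

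Second, we glue. Gluing all $\Lambda_r$ together, with no canonical region left over since the blocks cover everything, reproduces exactly $\boldsymbol\lambda$. Lemma~\ref{lem:defect_additivity} then gives
\[
\psi(\boldsymbol\lambda)=\sum_r\psi(\Lambda_r).
\]
The last assertion of that lemma identifies $\psi(\Lambda_r)$ with $\psi(\boldsymbol\lambda_r)$, where the block $\boldsymbol\lambda_r$ is viewed in $\mathring{\mathcal{T}}_2(w_r)$ after shifting colors. Note that the shifted bottom row is again the identity, matching Definition~\ref{def:indecomposable_triangle}. This proves additivity.

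Third, we derive the generating function. Uniqueness of the decomposition gives a bijection between $\mathring{\mathcal{T}}_2(n)$ and sequences of indecomposable triangles of total width $n$. This is the same bijection used in Corollary~\ref{cor:indecomposable_count}: the inverse map concatenates blocks with shifted colors, and the concatenation is valid by the gluing part of Lemma~\ref{lem:defect_additivity}. We also need that each glued piece is indecomposable with no extra splits, and that concatenations split exactly at the seams. Additivity makes $q^{\psi}$ multiplicative, so
\[
H_n^{\mathrm{id}}(q)=\sum_{j=1}^n \mathrm{Irr}_j(q)\,H_{n-j}^{\mathrm{id}}(q),
\]
obtained by recording the first block. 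This recurrence is equivalent to \eqref{eq:first_return_identity_bottom}.

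The only delicate point is the verification that the blocks are genuinely color-balanced defects, in particular that top dots inside $B_r$ use only colors of $B_r$. This rests on the counting argument above. Everything else is direct invocation of Lemma~\ref{lem:defect_additivity}.
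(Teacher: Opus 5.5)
Your proposal is correct and follows essentially the paper's route: you recognize the blocks of the interval decomposition as color-balanced defects and invoke Lemma~\ref{lem:defect_additivity}, where your counting check that a block's top slots carry only its own colors is exactly the verification the paper leaves implicit, and you then derive the generating function from the first-block recurrence as in Corollary~\ref{cor:indecomposable_count}. The point you flag but do not prove, that a concatenation of indecomposable blocks splits exactly at the seams, is routine by the arc argument of Lemma~\ref{lem:splitting_structure}, since the colors of the other blocks never cross a division interior to a given block.
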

\begin{proof}
The additivity is a specialization of Lemma~\ref{lem:defect_additivity} to the identity bottom row. The identity~\eqref{eq:first_return_identity_bottom} follows by the standard interval decomposition generating function formula for sums over interval partitions, see the proof of Corollary~\ref{cor:indecomposable_count}.
\end{proof}

By Proposition~\ref{prop:pure_indecomposable_bijection},
$\mathrm{Irr}_n(1)=I_n=h^\flat_n$, so the polynomials $\mathrm{Irr}_n(q)$
provide a $q$-analog of the indecomposable counting
sequence $h^\flat$ (see
Remark~\ref{rem:indecomposable_triangles} below for explicit values).

For a general bottom row, the statistic $\psi$ is \emph{not} additive
over splittings. For example, for $n=2$ and $\sigma=(2,1)$, the triangle with top row
$(2|21|1)$ splits after the first triangle but has $\psi=1$, while both
of its blocks have $\psi=0$.
We now show that the failure of additivity is explicit,
which produces a
$q$-exponential version of
the generating function identity \eqref{eq:first_return_counting} for arbitrary bottom rows.

\begin{Definition}
	\label{def:indecomposable_general}
	Let $\boldsymbol\lambda\in\mathcal{T}_2(n)$.
	As in Definition~\ref{def:indecomposable_triangle}, we say that
	$\boldsymbol\lambda$ \emph{splits} after the $j$-th triangle
	($1\le j\le n-1$) if no color occurs both among the first $j$ triangles
	and after them, and we call $\boldsymbol\lambda$ \emph{indecomposable}
	if it does not split.
	For $\boldsymbol\lambda\in\mathring{\mathcal{T}}_2(n)$ this is
	Definition~\ref{def:indecomposable_triangle}.
	Let $\widetilde{\mathrm{Irr}}_n(q)$ denote the sum of $q^{\psi}$ over
	the indecomposable elements of $\mathcal{T}_2(n)$.
\end{Definition}

\begin{Lemma}
	\label{lem:splitting_structure}
	A triangle $\boldsymbol\lambda\in\mathcal{T}_2(n)$ splits after the
	$j$-th triangle if and only if no interlacing arc crosses the division
	between the $j$-th and the $(j+1)$-st triangles.
	In this case, exactly $j$ colors occur among the first $j$ triangles,
	forming a subset $S\subseteq\{1,\dots,n\}$ with $|S|=j$, and the
	restrictions of $\boldsymbol\lambda$ to the first $j$ and the last
	$n-j$ triangles, with the colors relabeled via the increasing
	bijections $S\to\{1,\dots,j\}$ and
	$\{1,\dots,n\}\setminus S\to\{1,\dots,n-j\}$, are elements of~$\mathcal{T}_2(j)$ and $\mathcal{T}_2(n-j)$, respectively.
	Consequently, cutting at every division after which
	$\boldsymbol\lambda$ splits decomposes it uniquely into consecutive
	blocks with color sets $S_1,\dots,S_r$ $($listed left to right, so that
	$\{1,\dots,n\}=S_1\sqcup\cdots\sqcup S_r)$ and indecomposable
	standardized blocks $\boldsymbol\lambda_1,\dots,\boldsymbol\lambda_r$.
\end{Lemma}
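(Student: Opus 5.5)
The plan is to work directly from the interlacing condition for $N=2$. For a general bottom row $\sigma$, each color $c$ has one level-$1$ dot, at the bottom of triangle $\sigma^{-1}(c)$. It also has two level-$2$ dots, at positions $p_1<p_2$ in the linear order \eqref{eq:interlacing_order}, and interlacing says exactly that $p_1$ lies weakly left of the level-$1$ dot of $c$ and $p_2$ weakly right of it. We call the segment from $p_1$ to $p_2$ the interlacing arc of $c$. Each triangle contributes the three consecutive slots $2i-1$ (top), bottom of $i$, $2i$ (top), so the division between triangles $j$ and $j+1$ separates the top positions $\{1,\dots,2j\}$ together with the first $j$ bottom dots from the rest.

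First step: the equivalence. Suppose a color $c$ occurs both among the first $j$ triangles and after them. Since all three of its dots lie on the arc, and the arc contains both the bottom dot and the two top dots, some dot of $c$ is left of the division and another is right of it, so the arc crosses the division. Conversely, if the arc of $c$ crosses the division, then $p_1\le 2j<p_2$. Hence $c$ occurs in the first $j$ triangles (via $p_1$) and after them (via $p_2$), so $\boldsymbol\lambda$ does not split there.

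Second step: the counting of colors. Suppose $\boldsymbol\lambda$ splits after triangle $j$. Every color present on the left has all three of its dots on the left. The left part contains exactly $2j$ top dots and $j$ bottom dots. Each color contributes two top dots and one bottom dot, so the set $S$ of left colors has $|S|=j$, and the complement occupies the right part.

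Third step: restriction. Relabel colors by increasing bijections. The restricted configuration has $k$ dots of each color at level $k$. Interlacing for each color is a condition on the relative order of its own three dots, and restriction preserves the linear order \eqref{eq:interlacing_order} among the retained dots. So both restrictions lie in $\mathcal{T}_2(j)$ and $\mathcal{T}_2(n-j)$ respectively.

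Final step: uniqueness of the decomposition. Cut at every splitting division. The resulting blocks are determined by $\boldsymbol\lambda$, and the color sets $S_1,\dots,S_r$ partition $\{1,\dots,n\}$ by the second step. Each standardized block is indecomposable. Indeed, a division internal to a block at which the block splits would be a division at which $\boldsymbol\lambda$ itself splits, since colors outside the block lie entirely on one side of it; but all such divisions were already cut.

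The only mildly delicate point is the bookkeeping of the linear order for the first step, namely that "occurs among the first $j$ triangles" matches the top-position prefix $\{1,\dots,2j\}$ plus the first $j$ bottom slots. Everything else is immediate.
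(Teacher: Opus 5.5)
Your proof is correct and follows essentially the same route as the paper. The four ingredients match. The arc criterion comes from interlacing placing each color's level-$1$ dot between its two level-$2$ dots. The count $|S|=j$ comes from double-counting the $2j$ level-$2$ dots on the left. The restrictions are valid because interlacing constrains only the relative order of each color's own three dots. Each block is indecomposable because an internal splitting of a block would lift to a splitting of $\boldsymbol\lambda$. One harmless wording point: where you say ``weakly'', it may as well be ``strictly'', since a level-$1$ dot and a level-$2$ dot never occupy the same place in the linear order.
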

\begin{proof}
	If $\boldsymbol\lambda$ splits after the $j$-th triangle, then in
	particular no arc crosses the division.
	Conversely, suppose that no arc crosses the division.
	By interlacing, the level-$1$ dot of every color lies between its two
	level-$2$ dots in the linear order \eqref{eq:interlacing_order}. Hence
	all three dots of every color lie on one side of the division, and
	$\boldsymbol\lambda$ splits.

	Now suppose that $\boldsymbol\lambda$ splits after the $j$-th
	triangle.
	The first $j$ triangles contain $2j$ level-$2$ dots, and every color
	occurring there contributes exactly two of them (all of its dots lie on
	one side). Hence exactly $j$ colors occur among the first $j$
	triangles, and their level-$1$ dots fill the bottom positions
	$1,\dots,j$.
	Both restrictions have the correct number of dots of each color, and
	the interlacing condition for each color constrains only the relative
	order of its three dots, which is preserved under restriction to
	consecutive triangles and under order-preserving relabeling of the
	colors.
	Finally, each standardized block is indecomposable: by the arc
	criterion above, a~splitting of a block would produce a splitting of
	$\boldsymbol\lambda$ at the corresponding intermediate division, since
	the arcs of the colors of the other blocks do not cross it.
\end{proof}

\begin{Lemma}
	\label{lem:psi_crossterm}
	Let $\boldsymbol\lambda\in\mathcal{T}_2(n)$ have color sets
	$S_1,\dots,S_r$ and indecomposable standardized blocks
	$\boldsymbol\lambda_1,\dots,\boldsymbol\lambda_r$ as in
	Lemma~{\rm \ref{lem:splitting_structure}}.
	Then
	\begin{equation*}
		\psi(\boldsymbol\lambda)
		=\mathrm{inv}(S_1,\dots,S_r)
		+\sum_{i=1}^{r}\psi(\boldsymbol\lambda_i),
	\end{equation*}
	where
	$\mathrm{inv}(S_1,\dots,S_r)
	\coloneqq
	\# \{(a,b)\mid a\in S_i,\, b\in S_{i'},\, i<i',\, a>b \}$
	is the number of color inversions between distinct blocks.
\end{Lemma}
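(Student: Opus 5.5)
We work with the combinatorial formula \eqref{eq:psi_combinatorial} and write $\psi(\boldsymbol\lambda)=\sum_p f(p)$, where
\[
f(p)=\sum_{c'>c(p)}\bigl(T_{<p}(c')-B_{<p}(c')\bigr),
\]
as in the proof of Lemma~\ref{lem:defect_additivity}. The plan is to fix a bottom position $p$ lying in block $i$ (colors $S_i$) and split the inner sum over $c'>c(p)$ according to which block the color $c'$ belongs to. There are three cases.

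\textbf{Colors of earlier blocks.} Take $c'\in S_{i'}$ with $i'<i$. By Lemma~\ref{lem:splitting_structure}, all three dots of $c'$ lie strictly to the left of block $i$. Hence $T_{<p}(c')=2$ and $B_{<p}(c')=1$, so each such color contributes exactly $1$ whenever $c'>c(p)$.

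\textbf{Colors of later blocks.} Take $c'\in S_{i'}$ with $i'>i$. All dots of $c'$ lie to the right of $p$, so $T_{<p}(c')=B_{<p}(c')=0$ and these colors contribute nothing.

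\textbf{Colors of the same block.} Take $c'\in S_i$. The counts $T_{<p}(c')$ and $B_{<p}(c')$ only see dots inside block $i$, because all dots of $c'$ are in the block. The comparison $c'>c(p)$ is unchanged by the increasing relabeling $S_i\to\{1,\dots,|S_i|\}$. So these terms are exactly the summand $f_{\boldsymbol\lambda_i}(p)$ computed for the standardized block $\boldsymbol\lambda_i$, viewed as a triangle of width $|S_i|$ with its positions shifted. One should check that the linear order \eqref{eq:interlacing_order} restricted to a block of consecutive triangles agrees with the linear order of the standalone block; it does, since the order is defined triangle by triangle.

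Summing over all $p$ in block $i$, the same-block terms give $\psi(\boldsymbol\lambda_i)$. The earlier-block terms give
\[
\sum_{i'<i}\#\{(a,b)\mid a\in S_{i'},\ b\in S_i,\ a>b\}.
\]
This uses the fact that the bottom dots in block $i$ carry exactly the colors in $S_i$, each once, again by Lemma~\ref{lem:splitting_structure}: the $|S_i|$ bottom positions of the block are filled by the colors of $S_i$. Summing over $i$ then yields $\mathrm{inv}(S_1,\dots,S_r)+\sum_i\psi(\boldsymbol\lambda_i)$.

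The only delicate point is the first case: earlier blocks must contribute the full net count $2-1$ for \emph{every} color, not just in total. This follows because each color of an earlier block has exactly two top dots and one bottom dot, all on one side of the division, which is precisely what Lemma~\ref{lem:splitting_structure} guarantees.

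As a sanity check, take $n=2$, $\sigma=(2,1)$ with top row $(2|21|1)$. The second bottom dot has color $1$ and sees color $2$ from the earlier block, giving $\psi=1$, which equals $\mathrm{inv}$. This agrees with the example preceding Definition~\ref{def:indecomposable_general}.
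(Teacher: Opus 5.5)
Your proposal is correct and takes essentially the same route as the paper's proof: you split the colors $c'>c(p)$ into earlier blocks (contributing $2-1=1$), later blocks (contributing $0$), and the same block (reproducing $\psi(\boldsymbol\lambda_i)$ under the order-preserving relabeling). You also state explicitly two points the paper uses only tacitly: that the bottom positions of block $i$ carry exactly the colors of $S_i$, and that the linear order restricts correctly to a block.
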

\begin{proof}
	As in the proof of Lemma~\ref{lem:defect_additivity}, for a bottom position
	$p$ carrying the color $c(p)$, write
	$f(p)=\sum_{c'>c(p)} (T_{<p}(c')-B_{<p}(c') )$, so that
	$\psi(\boldsymbol\lambda)=\sum_{p}f(p)$ by
	\eqref{eq:psi_combinatorial}.
	Fix $p$ in the $i'$-th block and a color $c'>c(p)$, and consider the
	three possible locations of~$c'$.
	If $c'$ belongs to a block strictly to the left of the $i'$-th, then
	all three dots of $c'$ lie strictly to the left of~$p$, whence
	$T_{<p}(c')-B_{<p}(c')=2-1=1$.
	If~$c'$ belongs to a block strictly to the right, then all its dots
	lie to the right of $p$, and $T_{<p}(c')-B_{<p}(c')=0-0=0$.
	If~$c'$ belongs to the $i'$-th block, then the corresponding summand
	coincides with its counterpart for the standardized block
	$\boldsymbol\lambda_{i'}$: the relabeling of the colors is
	order-preserving, so the condition $c'>c(p)$ is unaffected, and so are
	the dots of $c'$ to the left of $p$.
	Summing the contributions of the first type over all $p$ and $c'$
	counts exactly the pairs $(a,b)=(c',c(p))$ with $a$ in an earlier
	block than $b$ and $a>b$, that is, $\mathrm{inv}(S_1,\dots,S_r)$.
	The contributions of the third type sum to
	$\sum_{i}\psi(\boldsymbol\lambda_i)$, again by~\eqref{eq:psi_combinatorial}.
\end{proof}

For the $n=2$ example above, $\mathrm{inv}(\{2\},\{1\})=1$, in agreement
with $\psi=1$.

Summing $q^{\psi}$ over all of $\mathcal{T}_2(n)$ now yields a $q$-analog
of the generating function identity~\eqref{eq:first_return_counting} for the
full $q$-enumeration.
Recall the standard $q$-notation
$[k]_q\coloneqq 1+q+\cdots+q^{k-1}$,
$[k]_q!\coloneqq[1]_q\ssp[2]_q\cdots[k]_q$, and the Gaussian multinomial
coefficients
\smash{${n\brack n_1,\dots,n_r}_q\coloneqq
\frac{[n]_q!}{[n_1]_q!\cdots[n_r]_q!}$}.

\begin{Theorem}\label{thm:q_exponential_first_return}
	For every $n\ge1$, we have
	\begin{equation}\label{eq:q_composition}
		T_2(n;q)
		=\sum_{r\ge1}\, \sum_{\substack{n_1+\cdots+n_r=n\\ n_i\ge1}}
		{n\brack n_1,\dots,n_r}_q\,
		\prod_{i=1}^{r}\widetilde{\mathrm{Irr}}_{n_i}(q).
	\end{equation}
	Equivalently, setting $T_2(0;q)\coloneqq1$, we have an identity of
	formal power series in $x$:
	\begin{equation}\label{eq:q_exponential_first_return}
		\sum_{n\ge0}\frac{T_2(n;q)}{[n]_q!}\ssp x^n
		=\biggl(1-\sum_{j\ge1}
		\frac{\widetilde{\mathrm{Irr}}_j(q)}{[j]_q!}\ssp x^j\biggr)^{-1}.
	\end{equation}
\end{Theorem}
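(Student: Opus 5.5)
The plan is to sum the identity of Lemma~\ref{lem:psi_crossterm} over the unique decomposition provided by Lemma~\ref{lem:splitting_structure}. First we set up a bijection. By Lemma~\ref{lem:splitting_structure}, each $\boldsymbol\lambda\in\mathcal{T}_2(n)$ determines a composition $(n_1,\dots,n_r)$ of $n$ (the block widths), an ordered set partition $(S_1,\dots,S_r)$ of $\{1,\dots,n\}$ with $|S_i|=n_i$, and indecomposable standardized blocks $\boldsymbol\lambda_i\in\mathcal{T}_2(n_i)$. We claim that this map is a bijection onto the set of all such triples. For the inverse, given such data we relabel the colors of $\boldsymbol\lambda_i$ via the increasing bijection $\{1,\dots,n_i\}\to S_i$ and place the blocks consecutively. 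The result is valid, because interlacing for each color involves only its own three dots, which all lie within one block. Its splitting points are exactly the block boundaries: the boundaries are splits because no arc crosses them, and there are no further splits because a split inside block $i$ would, by the arc criterion, be a split of the indecomposable $\boldsymbol\lambda_i$. Uniqueness of the decomposition then gives that the two maps are mutually inverse.

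Next we sum. By Lemma~\ref{lem:psi_crossterm}, $q^{\psi(\boldsymbol\lambda)}=q^{\mathrm{inv}(S_1,\dots,S_r)}\prod_i q^{\psi(\boldsymbol\lambda_i)}$. Summing over the bijection, the sum factors for each fixed composition into
\[
\Bigl(\sum_{(S_1,\dots,S_r)} q^{\mathrm{inv}(S_1,\dots,S_r)}\Bigr)\prod_{i=1}^r\widetilde{\mathrm{Irr}}_{n_i}(q),
\]
where the first sum runs over ordered set partitions with $|S_i|=n_i$. To evaluate that sum, encode $(S_1,\dots,S_r)$ as the word $w=w_1\cdots w_n$ with $w_c=i$ if $c\in S_i$. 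A pair $a>b$ with $a\in S_i$, $b\in S_{i'}$, $i<i'$ is exactly a pair of positions $b<a$ with $w_b>w_a$, i.e.\ an inversion of the word $w$. The classical MacMahon result states that the inversion generating function over words with content $(n_1,\dots,n_r)$ is the Gaussian multinomial ${n\brack n_1,\dots,n_r}_q$. This yields \eqref{eq:q_composition}.

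Finally, \eqref{eq:q_exponential_first_return} follows by dividing \eqref{eq:q_composition} by $[n]_q!$, which turns the summand into $\prod_i \widetilde{\mathrm{Irr}}_{n_i}(q)/[n_i]_q!$. Summing over $n$ and over compositions then gives $\sum_{r\ge0}\bigl(\sum_{j\ge1}\widetilde{\mathrm{Irr}}_j(q)x^j/[j]_q!\bigr)^r$, a geometric series, where the $r=0$ term matches $T_2(0;q)=1$. The only step needing care is the bijectivity claim, in particular that the glued configuration has no extra splits and that its decomposition returns the original blocks. The $q$-multinomial identity is standard, and we would cite it or prove it quickly by induction on the last letter.
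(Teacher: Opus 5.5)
Your proposal is correct and follows the paper's proof essentially step by step. Both arguments use the same four ingredients: the bijection with pairs (ordered set partition, tuple of indecomposable standardized blocks) from Lemma~\ref{lem:splitting_structure}; the factorization of $q^{\psi}$ via the inversion cross-term of Lemma~\ref{lem:psi_crossterm}; MacMahon's theorem applied to the word encoding of $(S_1,\dots,S_r)$; and the geometric-series resummation after dividing by $[n]_q!$.
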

\begin{proof}
	By Lemma~\ref{lem:splitting_structure}, the map sending a triangle to the
	pair (ordered set partition of the colors, tuple of standardized
	blocks) is a bijection
	\begin{equation*}
		\mathcal{T}_2(n)\,\longleftrightarrow\,
		\bigsqcup_{r\ge1}\,
		\bigsqcup_{\substack{n_1+\cdots+n_r=n\\ n_i\ge1}}\,
		\bigsqcup_{\substack{\{1,\dots,n\}=S_1\sqcup\cdots\sqcup S_r\\
		|S_i|=n_i}}
		\bigl\{(\boldsymbol\lambda_1,\dots,\boldsymbol\lambda_r)\mid
		\boldsymbol\lambda_i\in\mathcal{T}_2(n_i)
		\text{ indecomposable}\bigr\}.
	\end{equation*}
	Indeed, the inverse map places the blocks side by side and restores
	the colors of the $i$-th block via the increasing bijection
	$\{1,\dots,n_i\}\to S_i$; the result is a valid triangle (the level
	counts are clear, and interlacing for each color is preserved under
	shifting of the triangle indices and order-preserving recoloring),
	and by the arc criterion of Lemma~\ref{lem:splitting_structure} it splits
	exactly at the $r-1$ block boundaries, so the two maps are mutually
	inverse.

	By Lemma~\ref{lem:psi_crossterm}, under this bijection,
	\begin{equation*}
		T_2(n;q)
		=\sum_{r\ge1}\, \sum_{\substack{n_1+\cdots+n_r=n\\ n_i\ge1}}
		\Biggl(\ \sum_{\substack{\{1,\dots,n\}=S_1\sqcup\cdots\sqcup
		S_r\\ |S_i|=n_i}}
		q^{\mathrm{inv}(S_1,\dots,S_r)}\Biggr)
		\prod_{i=1}^{r}\widetilde{\mathrm{Irr}}_{n_i}(q).
	\end{equation*}
	Encoding an ordered set partition $(S_1,\dots,S_r)$ with $|S_i|=n_i$
	by the word $u_1\cdots u_n$ with $u_k=i$ for $k\in S_i$ turns
	$\mathrm{inv}(S_1,\dots,S_r)$ into the number of inversions of a
	word with $n_i$ letters $i$; therefore, by MacMahon's classical
	theorem on inversions of multiset permutations \cite{MacMahonBook}
	(see also \cite[Section~1.3]{Stanley1997}), the inner sum equals
 ${n\brack n_1,\dots,n_r}_q$, which proves \eqref{eq:q_composition}.
	Dividing \eqref{eq:q_composition} by~$[n]_q!$ and summing over
	$n\ge0$ yields \eqref{eq:q_exponential_first_return}, since the
	coefficient of $x^n$ in the $r$-th power of
	\smash{$\sum_{j\ge1}\widetilde{\mathrm{Irr}}_j(q)\ssp x^j/[j]_q!$} equals
	\smash{$\sum_{n_1+\cdots+n_r=n}\prod_{i=1}^r
	\widetilde{\mathrm{Irr}}_{n_i}(q)/[n_i]_q!$}.
\end{proof}

\begin{Remark}
Recalling that $H_n=T_2(n)/n!$, for $q=1$ \eqref{eq:q_exponential_first_return} straightforwardly reduces to~\eqref{eq:first_return_counting}.
\end{Remark}
\begin{Corollary}
	\label{cor:q_analog_full_hflat}
	For all $n\ge1$, we have
	$\widetilde{\mathrm{Irr}}_n(1)=n!\ssp h^\flat_n$.
	Thus the polynomials $\widetilde{\mathrm{Irr}}_n(q)$ form a second
	$q$-analog, this time of the sequence $\{n!\ssp h^\flat_n\}$,
	different from the polynomials $\mathrm{Irr}_n(q)$ of
	Proposition~{\rm \ref{prop:psi_multiplicative}}: for instance,
	\smash{$\widetilde{\mathrm{Irr}}_3(q)=1+14q+14q^2+q^3$}, while
	$\mathrm{Irr}_3(q)=1+4q$.
\end{Corollary}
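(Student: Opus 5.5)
The plan is to compare the $q=1$ specialization of Theorem~\ref{thm:q_exponential_first_return} with Corollary~\ref{cor:indecomposable_count}, and then to check the two small examples directly.

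At $q=1$ the Gaussian multinomial becomes the ordinary multinomial, so \eqref{eq:q_exponential_first_return} becomes
\[
\sum_{n\ge0}\frac{T_2(n)}{n!}x^n=\Bigl(1-\sum_{j\ge1}\frac{\widetilde{\mathrm{Irr}}_j(1)}{j!}x^j\Bigr)^{-1}.
\]
Theorem~\ref{thm:Genocchi_counts} gives $T_2(n)/n!=H_n$. Hence the left side equals $\sum_n H_n x^n$, which by \eqref{eq:first_return_counting} equals $\bigl(1-\sum_j I_j x^j\bigr)^{-1}$.

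Both sides are invertible formal power series with constant term $1$. Inverting them and comparing coefficients of $x^j$ gives $\widetilde{\mathrm{Irr}}_j(1)/j!=I_j$. Proposition~\ref{prop:pure_indecomposable_bijection} then gives $I_j=h^\flat_j$, so $\widetilde{\mathrm{Irr}}_n(1)=n!\,h^\flat_n$.

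A bijective alternative is available. Lemma~\ref{lem:splitting_structure} shows that indecomposability is invariant under simultaneous relabeling of colors, since the arcs depend only on positions. So the $n!$ relabelings act freely on the indecomposable elements of $\mathcal{T}_2(n)$, and each orbit contains exactly one element with identity bottom row. Counting orbits gives $\widetilde{\mathrm{Irr}}_n(1)=n!\,I_n$ directly.

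Next comes the example $\mathrm{Irr}_3(q)$. I will list the five indecomposable Dumont derangements in $S_6$, convert each to a top row via Theorem~\ref{thm:Genocchi_counts}, and evaluate $\psi$ using \eqref{eq:psi_combinatorial}. The expected answer is $1+4q$. As a consistency check, expanding \eqref{eq:first_return_identity_bottom} gives
\[
H^{\mathrm{id}}_3=\mathrm{Irr}_3+2\,\mathrm{Irr}_1\mathrm{Irr}_2+\mathrm{Irr}_1^3 .
\]
Here $\mathrm{Irr}_1=1$. The value $\mathrm{Irr}_2=1$ follows from $H^{\mathrm{id}}_2=2=\mathrm{Irr}_2+\mathrm{Irr}_1^2$. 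Substituting $H_3^{\mathrm{id}}=4+4q$ then gives $\mathrm{Irr}_3=1+4q$ with no enumeration needed.

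For $\widetilde{\mathrm{Irr}}_3(q)$, I will solve \eqref{eq:q_composition} for $n=1,2,3$. From $n=1$, $\widetilde{\mathrm{Irr}}_1=1$. From $n=2$, $2(1+q)=\widetilde{\mathrm{Irr}}_2+[2]_q$, so $\widetilde{\mathrm{Irr}}_2=1+q$. For $n=3$ the compositions give
\[
T_2(3;q)=\widetilde{\mathrm{Irr}}_3+2[3]_q\widetilde{\mathrm{Irr}}_2+[3]_q! .
\]
Subtracting $2(1+q+q^2)(1+q)+(1+q)(1+q+q^2)=3(1+2q+2q^2+q^3)$ from $4(1+5q+5q^2+q^3)$ leaves $1+14q+14q^2+q^3$. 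This differs from $\mathrm{Irr}_3$, which establishes the distinctness claim.

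The main obstacle is mild: it is making sure the $q=1$ specialization and the inversion of the power series are legitimate, which holds because all constant terms equal $1$. Everything else is routine arithmetic.
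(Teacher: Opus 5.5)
Your main argument (specializing \eqref{eq:q_exponential_first_return} at $q=1$, matching it with \eqref{eq:first_return_counting} by uniqueness of the inverted series, and invoking Proposition~\ref{prop:pure_indecomposable_bijection}) is essentially the paper's proof. Your computations of $\mathrm{Irr}_3(q)=1+4q$ and $\widetilde{\mathrm{Irr}}_3(q)=1+14q+14q^2+q^3$ from the listed values of $H_3^{\mathrm{id}}(q)$ and $T_2(3;q)$ are correct, and your relabeling-orbit alternative is also valid. It is in fact more direct, since it yields $\widetilde{\mathrm{Irr}}_n(1)=n!\,I_n$ without going through Theorem~\ref{thm:q_exponential_first_return}.
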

\begin{proof}
	Setting $q=1$ in \eqref{eq:q_exponential_first_return} and using
	$[n]_1!=n!$ and $T_2(n;1)=n!\cdot H_n$ turns it into
	$\sum_{n\ge0}H_n\ssp x^n
	=\bigl(1-\sum_{j\ge1}(\widetilde{\mathrm{Irr}}_j(1)/j!)\ssp
	x^j\bigr)^{-1}$.
	The coefficients of a series $J(x)$ with
	$\sum_{n\ge0}H_n\ssp x^n=(1-J(x))^{-1}$ are determined uniquely
	(recursively, $J_n=H_n-\sum_{j=1}^{n-1}J_j\ssp H_{n-j}$), so
	comparing with~\eqref{eq:first_return_counting} and using
	Corollary~\ref{cor:indecomposable_count} and Proposition~\ref{prop:pure_indecomposable_bijection}
	gives $\widetilde{\mathrm{Irr}}_n(1)/n!=I_n=h^\flat_n$.
\end{proof}

See Remark~\ref{rem:indecomposable_triangles} below for explicit values of
$\widetilde{\mathrm{Irr}}_n(q)$.

\subsection{Lower crossings and lower nestings on Dumont derangements}
\label{subsec:crossings_nestings}

Here we show that over the identity bottom row, the
statistic $\psi$ has a~description in terms of classical
permutation statistics on Dumont derangements.

Let $\boldsymbol\lambda\in\mathring{\mathcal{T}}_2(n)$ and let $\sigma\in
S_{2n}$ be the associated Dumont derangement (Theorem~\ref{thm:Genocchi_counts});
thus, for each color $c$, the two level-$2$ occurrences of $c$ lie in the
slots $\sigma(2c)<\sigma(2c-1)$, and $\sigma(2c)\le 2c-1<2c\le\sigma(2c-1)$.
Following Deb and Sokal \cite{DebSokal2024} (see also \cite{SokalZeng2022} and the references therein), call a pair $1\le i<j\le n$ a
\emph{lower crossing} if $\sigma(2i)<\sigma(2j)<2i$ and a \emph{lower
nesting} if $\sigma(2j)<\sigma(2i)<2i$, and write
$\operatorname{lcr}(\sigma)$ and $\operatorname{lne}(\sigma)$ for the number
of each.

\begin{Proposition}
	\label{prop:psi_crossings_nestings}
	For $\boldsymbol\lambda\in\mathring{\mathcal{T}}_2(n)$ with associated
	Dumont derangement $\sigma$,
	\begin{equation}
		\label{eq:psi_lcross_lnest}
		\psi(\boldsymbol\lambda)=
		\operatorname{lcr}(\sigma)+\operatorname{lne}(\sigma).
	\end{equation}
\end{Proposition}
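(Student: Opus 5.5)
The plan is to compute $\psi(\boldsymbol\lambda)$ directly from the combinatorial formula \eqref{eq:psi_combinatorial}. For the identity bottom row it collapses to a count of pairs of colors, which matches $\operatorname{lcr}+\operatorname{lne}$ term by term. The vertex model is not needed; only the positions of dots in the linear order \eqref{eq:interlacing_order} enter.

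First, fix the bottom position $p$ of triangle $c$, which carries color $c(p)=c$. In the linear order it sits between the level-$2$ slots $2c-1$ and $2c$, so the level-$2$ slots to its left are exactly $1,\dots,2c-1$.

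Take a color $c'>c$. Its bottom dot lies in triangle $c'$, to the right of $p$, so $B_{<p}(c')=0$. Hence the summand is $T_{<p}(c')$, the number of level-$2$ occurrences of $c'$ among slots $1,\dots,2c-1$. These occurrences sit in slots $\sigma(2c')<\sigma(2c'-1)$. The Dumont condition gives $\sigma(2c'-1)\ge 2c'>2c-1$, so the upper occurrence never counts. Therefore $T_{<p}(c')=\mathbf{1}[\sigma(2c')<2c]$, where $2c-1$ and $2c$ are interchangeable as thresholds since the slot is an integer.

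Summing over $c$ and $c'$ gives
\[
\psi(\boldsymbol\lambda)=\#\{(i,j)\mid 1\le i<j\le n,\ \sigma(2j)<2i\}.
\]
It remains to split each pair $(i,j)$ with $i<j$ and $\sigma(2j)<2i$ according to the relative position of $\sigma(2j)$ and $\sigma(2i)$:

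\begin{itemize}
\item $\sigma(2j)\neq\sigma(2i)$ because $\sigma$ is injective.
\item If $\sigma(2j)<\sigma(2i)$, then, since $\sigma(2i)<2i$ holds automatically for a Dumont derangement, this is exactly a lower nesting $\sigma(2j)<\sigma(2i)<2i$.
\item If $\sigma(2i)<\sigma(2j)<2i$, this is exactly a lower crossing.
\end{itemize}

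Conversely, every lower crossing or lower nesting satisfies $\sigma(2j)<2i$. This yields \eqref{eq:psi_lcross_lnest}.

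The only step needing care is the first: correctly locating the bottom dot of triangle $c$ relative to the level-$2$ slots, and checking that no contribution from $B_{<p}$ or from the upper occurrence survives. Both follow immediately from the Dumont inequalities recorded just before the proposition. As a sanity check, I would verify the formula on Figure~\ref{fig:bijection_dumont}.
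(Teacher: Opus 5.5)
Your proposal is correct and follows the paper's proof essentially step for step. You use \eqref{eq:psi_combinatorial} with $B_{<p}(c')=0$ and use the Dumont inequality for $\sigma(2c'-1)$ to discard the right slot, reducing to $\psi=\#\{(i,j)\mid i<j,\ \sigma(2j)<2i\}$. You then split each such pair by the relative order of $\sigma(2i)$ and $\sigma(2j)$ into lower crossings and lower nestings.
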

Figure~\ref{fig:upper_lower_example} illustrates the counting of lower
crossings and lower nestings.
\begin{proof}[Proof of Proposition~\ref{prop:psi_crossings_nestings}]
	We use the cumulative form \eqref{eq:psi_combinatorial},
	$\psi(\boldsymbol\lambda)=\sum_p f(p)$, where
	\[
		f(p)=\sum_{c'>c(p)}\bigl(T_{<p}(c')-B_{<p}(c')\bigr).
	\]
	Since the bottom row is the identity, the marker at position $p$
	carries color $c(p)=p$ and sits between slots $2p-1$ and $2p$. For a
	color $c'>p$, the marker of $c'$ lies at position $c'>p$, so
	$B_{<p}(c')=0$.
	Thus $f(p)=\sum_{c'>p}T_{<p}(c')$ counts the level-$2$ dots of colors
	$c'>p$ lying in slots $1,\dots,2p-1$.
	For $c'>p$, the right slot satisfies $\sigma(2c'-1)>2c'-1\ge 2p+1$, so
	only the left slot $\sigma(2c')$ can lie below $2p$. Hence
	$T_{<p}(c')= [ \sigma(2c')<2p ]$ and
	\begin{equation*}
		\psi(\boldsymbol\lambda)
		=\sum_{1\le i<j\le n} [ \sigma(2j)<2i ].
	\end{equation*}
	For such a pair $\sigma(2i)<2i$ as well, so either
	$\sigma(2i)<\sigma(2j)<2i$ (a lower crossing) or
	$\sigma(2j)<\sigma(2i)<2i$ (a lower nesting). The two cases are
	exclusive and exhaustive, giving \eqref{eq:psi_lcross_lnest}.
\end{proof}

\begin{figure}[!ht]
	\centering
	\begin{tikzpicture}[
		scale=0.78,
		dot1/.style={circle, fill=red!80, minimum size=5mm, inner sep=0pt, font=\footnotesize\bfseries},
		dot2/.style={circle, fill=green!70!black, minimum size=5mm, inner sep=0pt, font=\footnotesize\bfseries, text=white},
		dot3/.style={circle, fill=blue!70, minimum size=5mm, inner sep=0pt, font=\footnotesize\bfseries, text=white},
		dot4/.style={circle, fill=orange!70!black, minimum size=5mm, inner sep=0pt, font=\footnotesize\bfseries, text=white}]
		\node[font=\footnotesize] at (-0.3,1.4) {$\lambda^2$};
		\node[font=\footnotesize] at (-0.3,0) {$\lambda^1$};
		\node[dot1] (t1) at (1,1.4)  {1};
		\node[dot2] (t2) at (3,1.4)  {2};
		\node[dot4] (t3) at (4,1.4)  {4};
		\node[dot3] (t4) at (6,1.4)  {3};
		\node[dot1] (t5) at (7,1.4)  {1};
		\node[dot2] (t6) at (9,1.4)  {2};
		\node[dot3] (t7) at (10,1.4) {3};
		\node[dot4] (t8) at (12,1.4) {4};
		\node[dot1] (b1) at (2,0)  {1};
		\node[dot2] (b2) at (5,0)  {2};
		\node[dot3] (b3) at (8,0)  {3};
		\node[dot4] (b4) at (11,0) {4};
		\draw[red!80,          line width=1pt] (t1) to[bend left=28] (t5);
		\draw[green!70!black,  line width=1pt] (t2) to[bend left=28] (t6);
		\draw[blue!70,         line width=1pt] (t4) to[bend left=40] (t7);
		\draw[orange!70!black, line width=1pt] (t3) to[bend left=22] (t8);
		\foreach \n/\p in {t1/1,t2/2,t3/3,t4/4,t5/5,t6/6,t7/7,t8/8}
			\node[below=2pt of \n, font=\scriptsize] {$\p$};
	\end{tikzpicture}

	\smallskip

	\begin{tikzpicture}[scale=0.78, every node/.style={font=\footnotesize}]
		\node[anchor=east] at (0.4,0.55) {$i=$};
		\node[anchor=east] at (0.4,-0.2) {$\sigma(i)=$};
		\foreach \i in {1,...,8} \node at (\i,0.55) {$\i$};
		\foreach \i/\s/\col in {1/5/red!80,2/1/red!80,3/6/green!70!black,4/2/green!70!black,%
			5/7/blue!70,6/4/blue!70,7/8/orange!70!black,8/3/orange!70!black}
			\node[text=\col, font=\footnotesize\bfseries] at (\i,-0.2) {$\s$};
	\end{tikzpicture}
	\caption{A depth-$2$ colored interlacing $4$-triangle with identity bottom
	row $\lambda^1=(1,2,3,4)$ and top row $\lambda^2=(1,2,4,3,1,2,3,4)$, and the
	associated Dumont derangement $\sigma=(5,1,6,2,7,4,8,3)\in S_8$; as in
	Figure~\ref{fig:bijection_dumont}, the arcs join the two level-$2$ occurrences of
	each color.
	Here the pair $(i,j)=(2,4)$ is a lower crossing, as
	$\sigma(4)=2<\sigma(8)=3<4$, and the pair $(3,4)$ is a lower nesting, as
	$\sigma(8)=3<\sigma(6)=4<6$. Thus
	$\operatorname{lcr}(\sigma)=\operatorname{lne}(\sigma)=1$, and
	$\psi(\boldsymbol\lambda)=2$ by \eqref{eq:psi_lcross_lnest}.}
	\label{fig:upper_lower_example}
\end{figure}

Write $\mathfrak{D}_{2n}$ for the set of Dumont derangements in $S_{2n}$.
Combining~\eqref{eq:psi_lcross_lnest} with Proposition~\ref{prop:splits_direct_sum}
(indecomposable triangles correspond to indecomposable Dumont
derangements) gives
\begin{equation}
	\label{eq:Hid_Irr_crossnest}
	H_n^{\mathrm{id}}(q)
	=\!\!\sum_{\sigma\in\mathfrak{D}_{2n}}\!\!
	q^{\operatorname{lcr}(\sigma)+\operatorname{lne}(\sigma)},
	\qquad
	\mathrm{Irr}_n(q)
	=\!\!\sum_{\substack{\sigma\in\mathfrak{D}_{2n}\\ \text{indecomposable}}}\!\!
	q^{\operatorname{lcr}(\sigma)+\operatorname{lne}(\sigma)}.
\end{equation}
The left-hand enumerator $H_n^{\mathrm{id}}(q)$ in
\eqref{eq:Hid_Irr_crossnest} is the specialization of the Deb--Sokal
crossing--nesting polynomial for Dumont derangements \cite{DebSokal2024} in
which $q$ marks lower crossings and lower nestings.

\subsection{Continued fractions, moments and cumulants}
\label{subsec:moments_cumulants}

Recall from Remark~\ref{rem:classical_Sfrac} that the ordinary generating
function of the median Genocchi numbers admits a Stieltjes
continued-fraction expansion. We round out the combinatorial analysis by asking whether the
generating functions in \eqref{eq:Hid_Irr_crossnest} admit analogous
expansions and by signposting a complementary probabilistic interpretation of these objects.

For $H_n^{\mathrm{id}}(q)$, a specialization of the continued
fractions of Deb and Sokal \cite{DebSokal2024} gives
\begin{equation}
  \sum_{n\geq0} H_n^{\mathrm{id}}(q)\ssp x^n
  =
  \cfrac{1}{1-\cfrac{\alpha_1x}
  {1-\cfrac{\alpha_2x}
  {1-\cdots}}},
  \qquad
  \alpha_{2k-1}=\alpha_{2k}=k^2q^{\ssp k-1},
  \qquad k\geq1 .
  \label{eq:Hid_Sfrac}
\end{equation}
By Proposition~\ref{prop:psi_multiplicative}, the identity-bottom enumerators
and the indecomposable enumerators satisfy
\begin{equation}
  \sum_{n\geq0}H_n^{\mathrm{id}}(q)x^n
  =
  \frac{1}{
    1-\sum_{n\geq1}\mathrm{Irr}_n(q)x^n
  }.
  \label{eq:boolean_first_return}
\end{equation}
We therefore deduce the following.

\begin{Corollary}\label{cor:shifted_Irr_Sfrac}
We have
\begin{equation}
\begin{split}
 & \sum_{n\geq0}\mathrm{Irr}_{n+1}(q)\ssp x^n
  =
  \cfrac{1}{1-\cfrac{\beta_1x}
  {1-\cfrac{\beta_2x}
  {1-\cdots}}},
  \\
 & \beta_1=1, \qquad
  \beta_{2k}=\beta_{2k+1}
  =(k+1)^2q^{\ssp k},
  \qquad k\geq1.
\end{split}
  \label{eq:shifted_Irr_Sfrac}
\end{equation}
\end{Corollary}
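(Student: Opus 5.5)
The plan is purely algebraic. We combine the Stieltjes continued fraction \eqref{eq:Hid_Sfrac} for $F(x)\coloneqq\sum_{n\ge0}H_n^{\mathrm{id}}(q)\ssp x^n$ with the first-return relation \eqref{eq:boolean_first_return}, and read off the continued fraction for the indecomposable enumerators by stripping the top level of the fraction for $F$.

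\textbf{Step 1: set-up.} Write $G(x)\coloneqq\sum_{n\ge1}\mathrm{Irr}_n(q)\ssp x^n$, so that \eqref{eq:boolean_first_return} reads $F=(1-G)^{-1}$. Equivalently, $G=1-1/F$. This inversion is legitimate in $\mathbb{Z}[q][[x]]$ because $F$ has constant term $1$.

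\textbf{Step 2: the tail of the fraction.} For a sequence $(\gamma_1,\gamma_2,\dots)$ of polynomials in $q$, let $S[\gamma_1,\gamma_2,\dots](x)$ denote the formal S-fraction
\[
\cfrac{1}{1-\cfrac{\gamma_1x}{1-\cfrac{\gamma_2x}{1-\cdots}}}.
\]
It is defined as the $x$-adic limit of its truncations; the coefficient of $x^n$ stabilizes once the truncation depth exceeds $n$. This limit satisfies the functional equation
\[
S[\gamma_1,\gamma_2,\dots]=\frac{1}{1-\gamma_1x\,S[\gamma_2,\gamma_3,\dots]}.
\]
To justify it, we verify the identity for each finite truncation and then pass to the limit coefficientwise. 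Applying this with $\gamma_j=\alpha_j$ from \eqref{eq:Hid_Sfrac} gives
\[
1-\frac1F=\alpha_1x\,S[\alpha_2,\alpha_3,\dots].
\]

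\textbf{Step 3: identify the coefficients.} Since $\alpha_1=1\cdot q^0=1$, Steps 1 and 2 give
\[
\sum_{n\ge0}\mathrm{Irr}_{n+1}(q)\ssp x^n=\frac{G}{x}=S[\alpha_2,\alpha_3,\dots].
\]
Hence $\beta_j=\alpha_{j+1}$ for all $j$. Explicitly, $\beta_1=\alpha_2=1$. For $k\ge1$, $\beta_{2k}=\alpha_{2k+1}=(k+1)^2q^{k}$ and $\beta_{2k+1}=\alpha_{2k+2}=(k+1)^2q^{k}$. This is exactly \eqref{eq:shifted_Irr_Sfrac}.

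\textbf{Main obstacle and checks.} There is no real combinatorial difficulty, since \eqref{eq:Hid_Sfrac} is taken from Deb--Sokal as a specialization. The only point needing care is the formal-power-series justification of the tail identity in Step 2, which the truncation argument handles.

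As sanity checks, we can compare with Corollary~\ref{cor:indecomposable_Sfrac} at $q=1$, where $\beta_{2k-1}=k^2$ and $\beta_{2k}=(k+1)^2$ agree. At low order, $\mathrm{Irr}_3(q)=\beta_1\beta_2+\beta_1^2=1+4q$, which matches Corollary~\ref{cor:q_analog_full_hflat}.
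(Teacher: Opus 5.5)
Your proposal is correct and is essentially the paper's argument. The paper also deduces the corollary directly from the specialized Deb--Sokal S-fraction \eqref{eq:Hid_Sfrac} together with the first-return relation \eqref{eq:boolean_first_return}: since $\alpha_1=1$, peeling off the top level gives $\beta_j=\alpha_{j+1}$, and your formal justification of the tail identity and your checks ($q=1$, and $\mathrm{Irr}_3(q)=1+4q$) are sound.
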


These continued fractions of course admit a moment-theoretic interpretation. For every $q>0$, all the coefficients in
\eqref{eq:Hid_Sfrac} are strictly positive. Hence, \smash{$(H_n^{\mathrm{id}}(q)\bigr)_{n\geq0}$}
is a Stieltjes moment sequence. That is, since $H_0^{\mathrm{id}}(q)=1$, for each $q>0$,
there exists a probability measure $\mu_{H,q}$ on $[0,\infty)$
such that
\[
  H_n^{\mathrm{id}}(q)
  =
  \int_{[0,\infty)}t^n\,{\rm d}\mu_{H,q}(t).
\]
Moreover, the strict positivity of the continued-fraction coefficients implies
that every representing measure for this moment sequence has infinite support.

Likewise, \eqref{eq:shifted_Irr_Sfrac} shows that $(\mathrm{Irr}_{n+1}(q))_{n\geq0}$
is a Stieltjes moment sequence. Since $\mathrm{Irr}_{1}(q)=1$, for each $q>0$ there exists a probability measure $\mu_{I,q}$, with infinite support, such that
 \begin{equation*}
  \mathrm{Irr}_{n+1}(q) =
  \int_{[0,\infty)}t^n\,{\rm d}\mu_{I,q}(t).
\end{equation*}

Finally, the interval-decomposition generating function identity
\eqref{eq:boolean_first_return} is precisely the \emph{Boolean
moment--cumulant relation}~\cite{SpeicherWoroudi1997}. In particular,
if~$b_n^{\mu_{H,q}}$ denotes the $n$-th Boolean cumulant of~$\mu_{H,q}$, then
\[
  b_n^{\mu_{H,q}}=\mathrm{Irr}_n(q),
  \qquad n\geq1.
\]
Whether the apparent connection between Boolean probability and the algebraic and probabilistic construction at hand (arising from the central limit theorem for probability measures arising from the LLT polynomials) is coincidental or revealing of an underlying noncommutative probabilistic
structure remains to be determined.
Overall, we find that the colored inter\-lacing triangles in the two-level regime with an arbitrary number of
colors benefit from a rich combinatorial structure that points the
way to new avenues for inquiry.

\section{Computational results and observations}\label{sec:enumeration}

In this section, we present computational data for colored interlacing triangles
beyond the cases covered by Theorem~\ref{thm:Genocchi_counts} and
\cite{gaetz2023bijection}.

\subsection[Computation of new values of T\_N(n)]{Computation of new values of $\boldsymbol{T_N(n)}$}
\label{subsec:computational_approach}

We computed new values of $T_N(n)$ using a level-by-level dynamic programming
approach with GPU acceleration.
The code related to this subsection is available at the following GitHub
repository:\!
\url{https://github.com/lenis2000/colored_interlacing_triangles_enumeration/}.\!
 We enumerate interlacing colored triangles
$\boldsymbol\lambda = \bigl(\lambda^1 \prec \lambda^2 \prec \cdots \prec \lambda^N\bigr)$
by computing, for each level $k = 1, \dots, N-1$, the number of ways to extend
each configuration $\lambda^k$ to a compatible $\lambda^{k+1}$.
We enumerate only configurations in $\mathring{\mathcal{T}}_N(n)$, that is,
those with $\lambda^1=(1,2,\dots,n)$.
Moreover, in generating candidates for $\lambda^{k+1}$, we exploit the
$2^{n-1}$-symmetry from Proposition~\ref{Prop2.3}, and also
restrict the search to sequences $\lambda^{k+1}$ which start with $1$ and end
with $n$.
To obtain $T_N(n)$, we multiply the final count by $n!\ssp 2^{n-1}$.

\begin{table}[!ht]
	\centering \small
	\begin{tabular}{|c|c|c|c|c|c|c|c|}
\hline $T_N(n)\rule{0pt}{13pt}$ & $n=2$                 & $n=3$                                & $n=4$          & $n=5$       & $n=6$             & \dots & $n$           \\[2pt]
\hline $N=1$                    & 2                     & 6                                    & 24             & 120         & 720               & \dots & $n!$          \\ \hline $N=2$                    & 4                     & 48                                   & 1,344          & 72,960      & 6,796,800         & \dots &$n!\cdot H_n$ \\ \hline $N=3$                    & 8                     & 528                                  &191,232        & 257,794,560 & 1,012,737,392,640 & \dots &\textbf{?}             \\ \hline $N=4$                    & 16                    & 8,160                                & 72,099,840     & ?           & ?                 & \dots & \textbf{?}		\\ \hline $N=5$                    & 32                    & 179,520                              & 73,410,306,048 & ?           & ?                 & \dots & \textbf{?}             \\ \hline
		$N=6$            & 64                    & 5,666,304                            & ?              & ?           &?                 & \dots & \textbf{?}             \\ \hline \dots                   & \dots                & \dots                               & \dots         & \dots      & \dots            & \dots & \dots		\\ \hline $N$                      & $2^N$\rule{0pt}{12pt} &$\tfrac14\mathfrak{g}_N^{\Delta}(4)$ & \textbf{?}     &\textbf{?}  & \textbf{?}        & \dots &		\\[3pt] \hline
	\end{tabular}
	\caption{Known and unknown values of $T_N(n)$, the number of colored
	interlacing $n$-triangles with $N$ levels.
	Bold-faced question marks indicate unknown series of values.
	The columns $n=2$ and $n=3$ are~\cite{ABW2023coloured_LLT} and~\cite{gaetz2023bijection}, respectively.
	The row $N=2$ is our~Theorem~\ref{thm:Genocchi_counts}.}
	\label{tab:known_counts_of_colored_interlacing_triangles}
\end{table}

We implemented the interlacing checks on Apple Metal GPU, achieving throughput
of about~$1$ to~$2$ billion checked pairs per second on an M2 Pro chip.
For large state spaces exceeding available memory (e.g., $T_5(4)$), we process
target states in batches of $50$ million with intermediate results accumulated.
The results of the computations are summarized in
Table~\ref{tab:known_counts_of_colored_interlacing_triangles}.
All of the counts took less than a second to compute, except for $T_3(6)$, which
took about $14$ minutes, and $T_5(4)$, which took about $9$ hours.

Beyond the values reported in
Table~\ref{tab:known_counts_of_colored_interlacing_triangles} and the known cases
$N\le 2$ or $n\le 3$, the number of configurations to be checked at the final
level grows extremely rapidly, and the required computation time stretches to
multiple days or weeks.

The elements in Table~\ref{tab:known_counts_of_colored_interlacing_triangles} contain
large prime factors.
For instance, $T_5(4)$ is divisible by 331,897, and $T_3(6)$ is divisible by
457,871, and both of these divisors are primes.
This indicates that a simple product formula for the $T_N(n)$'s is highly
unlikely.
On the other hand, the $2$-adic valuations (that is, the highest powers of $2$
dividing the numbers) of the normalized counts $T_N(n)/n!$ may exhibit certain
patterns, see Table~\ref{tab:2_adic_valuations}.

\begin{table}[htpb]
	\centering
	\begin{tabular}{|c|c|c|c|c|c|}
		\hline               & $n=2$ & $n=3$ & $n=4$ & $n=5$ & $n=6$ \\
		\hline
		$N=1$ & 0     & 0     & 0     & 0     & 0 \\
		\hline
		$N=2$ & 1     & 3     & 3     & 5     & 5 \\
		\hline
		$N=3$ & 2     & 3     & 5     & 6     & 10 \\
		\hline
		$N=4$ & 3     & 4     & 8     & ?     & ? \\
		\hline
		$N=5$ & 4     & 5     & 10    & ?     & ? \\
		\hline
	\end{tabular}
	\caption{$2$-adic valuations of the normalized counts $T_N(n)/n!$.}\label{tab:2_adic_valuations}
\end{table}

\subsection[Computation of q-polynomials for two levels]{Computation of $\boldsymbol{q}$-polynomials for two levels}
\label{subsec:q_data}

We computed the $q$-counting polynomials $T_2(n;q)$ defined in
Section~\ref{sec:q_enumeration} by finding the statistic~$\psi$ for all colored
interlacing triangles of depth $N=2$.
The code and data of the statistic $\psi$ are available at the following GitHub
repository:
\begin{equation}
	\label{eq:colored_interlacing_triangles_q_enumeration_github}
	\text{\url{https://github.com/lenis2000/colored_interlacing_triangles_q_enumeration}}
\end{equation}
For $N=2$, a colored interlacing $n$-triangle consists of a permutation
$\lambda^1 \in S_n$ (level~$1$) and a sequence $\lambda^2$ of length $2n$ where
each color appears exactly twice (level~$2$).
The interlacing condition requires that for each color $c$, its position in
$\lambda^1$ lies strictly between its two positions in $\lambda^2$.

The algorithm explores the same symmetries as in
Section~\ref{subsec:computational_approach} to reduce the search space.
For each ``canonical'' triangle from $\mathring{\mathcal{T}}_2(n)$ (with
identity permutation at level~$1$), we compute the $\psi$-statistic for all $n!$
color permutations in the whole triangle.
Namely, for each valid interlacing triangle $\bigl(\lambda^1, \lambda^2\bigr)$, we compute
$\psi\bigl(\lambda^1, \lambda^2\bigr)$ using bitmask operations: we process positions
right-to-left, maintaining a bitmask of ``active'' colors (those that have
entered from the right but not yet exited upward).
The contribution to the $q$-power is computed via hardware-accelerated
population count (\texttt{popcount}) instructions.
The available data on GitHub also accounts for the distribution of
$q$-statistics over all $n!$ permutations for each canonical triangle.

The implementation uses \texttt{C++} template metaprogramming to specialize the
inner loop for each $n$, enabling complete loop unrolling and instruction-level
parallelism (processing 4 permutations simultaneously).
Combined with OpenMP parallelization, we achieve the following benchmarks on an
Apple M2 Pro (6 performance cores):
\begin{center}
	{\renewcommand{\arraystretch}{1.15}%
	\setlength{\tabcolsep}{8pt}%
	\begin{tabular}{c|r|r}
\hline $n$       & \multicolumn{1}{c|}{Canonical triangles} &\multicolumn{1}{c}{Time} \\ \hline
		6 & 295                                      & $< 1$ second \\
7                & 3,098                                    & $< 1$ second \\
8                & 42,271                                   & $\approx 28$ seconds \\
9                & 726,734                                  & $\approx 1$ hour \\ \hline
	\end{tabular}}
\end{center}

Writing $T_2(n;q) = 2^{n-1} P_n(q)$, the polynomials $P_n(q)$ are of degree
$\binom{n}{2}$ and are palindromic (indicated by the symbol $\circlearrowleft$
below).
Here are their explicit forms for $n$ up to $9$:
\begin{align*}
&P_1(q)= 1, \\
&P_2(q)= 1+q, \\
&P_3(q)= 1+5q+5q^2+q^3, \\
&P_4(q)= 1+10q+47q^2+52q^3+47q^4+10q^5+q^6, \\
&P_5(q)= 1+15q+132q^2+527q^3+1019q^4+1172q^5+\circlearrowleft
	, \\
&P_6(q)= 1+20q+245q^2+1825q^3+7295q^4+19534q^5+34465q^6
	+42815q^7+\circlearrowleft, \\
&P_7(q)= 1+25q+383q^2+3977q^3+26645q^4+115165q^5+365346q^6 \\
&\hphantom{P_7(q)= }{}
+878276q^7+1563964q^8+2226948q^9+2626230q^{10}+\circlearrowleft, \\
&P_8(q)= 1+30q+546q^2+7018q^3+64622q^4+411692q^5+1914780q^6 \\
       &\hphantom{P_8(q)=}{}
        +6889907q^7+19865655q^8+46208719q^9+88274748q^{10} \\
       &\hphantom{P_8(q)=}{} +141139717q^{11}+193232778q^{12}+231337829q^{13}+245670636q^{14} +\circlearrowleft,\\
&P_9(q)=
	1 + 35q + 734q^2 + 11064q^3 + 125319q^4 + 1059757q^5 + 6649287q^6 + 32573212q^7\\
       &\hphantom{P_9(q)=}{}+ 129428316q^8 + 424672873q^9 + 1174423848q^{10} \\
        &
 \hphantom{P_9(q)=}{}
        + 2770263242q^{11} +
	5640036376q^{12}  + 9998171117q^{13} + 15583534941q^{14}
\\       &\hphantom{P_9(q)=}{}
	+21645762974q^{15}   + 27163602028q^{16} + 31055190622q^{17} +
	32466222428q^{18} + \circlearrowleft.
\end{align*}

\begin{Conjecture}
	\label{conj:log_concavity} The coefficients of each polynomial $P_n(q)$
	$($for each fixed $n)$ are
	log-concave: if $P_n(q) = \sum_{k} a_k(n) q^k$, then we have
	$a_k(n)^2 \ge a_{k-1}(n) a_{k+1}(n)$ for all $n$, $k$.
	We have verified this for $n \le 9$.
\end{Conjecture}

\begin{Remark}
	\label{rem:not_real_rooted}
	The polynomials $P_n(q)$ are not real-rooted: for $n\le9$ we find that
	$P_n(q)$ has non-real roots once $n\ge4$ (already all six roots of $P_4(q)$
	are non-real) and roots of positive real part once $n\ge6$.
	In particular, Conjecture~\ref{conj:log_concavity} cannot be deduced from
	real-rootedness of $P_n(q)$.
\end{Remark}

\subsection[Coefficients of P\_n(q)]{Coefficients of $\boldsymbol{P_n(q)}$}
\label{subsec:conjectures_on_coefficients}

We implemented a different strategy to access low-degree coefficients of
$P_n(q)$ beyond $n=9$, where the full polynomial computation becomes too slow.
The code for this approach is available at the repository
\eqref{eq:colored_interlacing_triangles_q_enumeration_github}.

Rather than generating all Dumont derangements for a given $n$ (corresponding to
interlacing triangles of depth $2$ with identity permutation at the bottom row)
and then computing the $\psi$-statistics for all $n!$ permutations of colors, we
\emph{invert the loops}: the outer loop iterates over all $n!$ bottom row
permutations $\lambda^1$, while the inner loop generates Dumont derangements
on-the-fly via backtracking.
The key observation is that the $\psi$-statistic accumulates during the
backtracking process.
When computing only coefficients of $q^k$ for $k<K_{\max}$ (with $K_{\max}$
small), we prune any branch where the partial $\psi$-value reaches $K_{\max}$.
This pruning eliminates a large portion of the Dumont search space.

A further optimization exploits the relationship between the inversion number of
the bottom row permutation $\lambda^1$ and the $\psi$-statistic.
Empirically, permutations with large inversion numbers tend to produce only high
$\psi$-values across all Dumont derangements.
By restricting to bottom row permutations with
$\operatorname{inv}\bigl(\lambda^1\bigr) \leq I_{\max}$ for a suitable threshold
$I_{\max}$, we reduce the outer loop dramatically while still capturing all
contributions to low-degree coefficients.
Combined, these optimizations make it feasible to compute low-degree
coefficients up to $n = 15$.

We obtain the following initial pieces of further polynomials $P_n(q)$:
\begin{align*}
	&P_{10}(q)= 1 + 40q + 947q^2 + 16240q^3 + 214297q^4 + 2207081q^5 +\cdots,	\\
	&P_{11}(q)= 1+45q + 1185q^2 + 22671q^3 + 338129q^4 +4033256 q^5+\cdots,	\\
	&P_{12}(q)= 1 + 50q + 1448q^2 + 30482q^3 + 504040q^4 +6762968 q^5 +\cdots,	\\
	&P_{13}(q)= 1 + 55q + 1736q^2 + 39798q^3 + 719880q^4 +10663371q^5 +\cdots,	\\
	&P_{14}(q)= 1 + 60q + 2049q^2 + 50744q^3 + 994124q^4 +16045700 q^5 +\cdots,	\\
	&P_{15}(q)= 1 + 65q + 2387q^2 + 63445q^3 + 1335872q^4 + 23268315 q^5+
	\cdots.
\end{align*}
Based on this data, we conjecture the following polynomial behavior of all the
remaining coefficients of $P_n(q)$, with concrete polynomials for the next four
(recall that the linear coefficient is $a_1(n)=5(n-2)$, see
Proposition~\ref{prop:linear_coefficient}).

\begin{Conjecture}
	\label{conj:coeff_polynomials} For each fixed $k \geq 0$, and all
	$n\ge n_0(k)$, the coefficient $a_k(n)$ of $q^k$ in $P_n(q)$ is a polynomial
	in $n$ of degree $k$.
	Specifically, we conjecture the following polynomials for the next four
	coefficients:
	\begin{gather}
		\label{eq:conj_coeff_polynomials}
		\begin{split}
			&a_2(n)=\frac{25n^2 - 49n - 116}{2},\qquad n \geq 5,\\
			&a_3(n)=\frac{125n^3 + 15n^2 - 3104n + 1980}{6},\qquad  n \geq 7,\\
			&a_4(n)=\frac{625 n^4 + 2650 n^3 - 36877 n^2 - 31390 n + 244728}{24},\qquad  n \geq 9,\\
			&a_5(n)=\frac{3125 n^5 + 32500 n^4 - 290925 n^3 - 1585240 n^2 +
			7120060 n + 5588400}{120},\qquad  n \geq 11.
		\end{split}\hspace{-10mm}
	\end{gather}
\end{Conjecture}

\begin{Conjecture}[leading coefficient]
	\label{conj:leading_coefficient} As polynomials in $n$, the expressions
	$k!\ssp a_k(n)$ have integer coefficients for all $k \geq 0$, and their
	leading coefficient is $5^k$.
\end{Conjecture}
\begin{Remark}
	\label{rmk:log_concavity} The polynomials \eqref{eq:conj_coeff_polynomials}
	(together with $a_1(n)=5n-10$) satisfy log-concavity
	$a_k(n)^2 \ge a_{k-1}(n) a_{k+1}(n)$ for $k=2,3,4$ and all $n>0$.
	This supports our Conjecture~\ref{conj:log_concavity}.
\end{Remark}

\begin{Remark}
	\label{rem:indecomposable_triangles}
	The $q$-statistic also
	allows us to deform the enumeration of indecomposable
	triangles.
	The first few of the polynomials $\mathrm{Irr}_n(q)$ from
	Proposition~\ref{prop:psi_multiplicative} are
	\begin{align*}
			&\mathrm{Irr}_1(q)=\mathrm{Irr}_2(q)=1,\\
			&\mathrm{Irr}_3(q)=1+4q,\\
			&\mathrm{Irr}_4(q)=1+8q+32q^2,\\
			&\mathrm{Irr}_5(q)=1+12q+80q^2+256q^3+144q^4,\\
			&\mathrm{Irr}_6(q)=1+16q+144q^2+768q^3+2336q^4+2304q^5+2592q^6,\\
			&\mathrm{Irr}_7(q)=1+20q+224q^2+1600q^3+7600q^4+22144q^5\\
			&\hphantom{\mathrm{Irr}_7(q)=}{} +32832q^6+46656q^7+46656q^8+20736q^9,\\
			&\mathrm{Irr}_8(q)=1+24q+320q^2+2816q^3+17472q^4+75904q^5+221792q^6\\
			&\hphantom{\mathrm{Irr}_8(q)=}{} +408960q^7+715392q^8+974592q^9+1171584q^{10}+746496q^{11}
			+663552q^{12}.
	\end{align*}
	The polynomials $\mathrm{Irr}_n(q)$ were extracted from the
	$H_n^{\mathrm{id}}(q)$'s via the generating function
	identity~\eqref{eq:first_return_identity_bottom}. The
	$H_n^{\mathrm{id}}(q)$ are part of the enumeration data described in
	Section~\ref{subsec:q_data}.
	As a consistency check, the evaluations $\mathrm{Irr}_n(1)$ reproduce
	the terms of
	the sequence $h^\flat$
	(Corollary~\ref{cor:indecomposable_count} and
	\cite[A362111]{OEIS}) for all $n\le 8$.

	The companion polynomials $\widetilde{\mathrm{Irr}}_n(q)$ of
	Definition~\ref{def:indecomposable_general}, which enumerate indecomposable
	triangles with \emph{arbitrary} bottom rows, can be extracted from the
	polynomials $P_n(q)$ of Section~\ref{subsec:q_data} via the $q$-exponential
	identity \eqref{eq:q_exponential_first_return}:
	\begin{align*}
		&\widetilde{\mathrm{Irr}}_1(q)=1,\\
		&\widetilde{\mathrm{Irr}}_2(q)=1+q,\\
		&\widetilde{\mathrm{Irr}}_3(q)=1+14q+14q^2+q^3,\\
		&\widetilde{\mathrm{Irr}}_4(q)=1+35q+293q^2+326q^3+293q^4
		+35q^5+q^6,\\
		&\widetilde{\mathrm{Irr}}_5(q)=1+56q+1157q^2+6627q^3+13728q^4
		+16022q^5+\circlearrowleft,\\
		&\widetilde{\mathrm{Irr}}_6(q)=1+77q+2526q^2+35189q^3+176257q^4\\
&\hphantom{\widetilde{\mathrm{Irr}}_6(q)=}{}
		+528839q^5+972546q^6+1222525q^7+\circlearrowleft.
	\end{align*}
	In agreement with Corollary~\ref{cor:q_analog_full_hflat}, the evaluations
	$\widetilde{\mathrm{Irr}}_n(1)=n!\ssp h^\flat_n$ hold for all
	$n\le 9$, with
\[
h^\flat_7=178469 , \qquad h^\flat_8=4998905 , \qquad \text{and}\qquad h^\flat_9=174914077.
\]
Since the color complement involution of Lemma~\ref{lem:color_complement} preserves
indecomposability, the polynomials
\smash{$\widetilde{\operatorname{Irr}}_n(q)$} are palindromic of degree
$\binom{n}{2}$, similarly to $T_2(n;q)$ (Proposition~\ref{prop:q_polynomial_properties}).
\end{Remark}

We can make the lower-bound direction of Conjecture~\ref{conj:leading_coefficient}
rigorous in the following asymptotic sense:{\samepage
\[
\liminf_{n\to\infty}k!\ssp a_k(n)/n^{k}\ge 5^{k}
\] for every fixed $k$.
Throughout we use the combinatorial description \eqref{eq:psi_combinatorial} of
$\psi$, and we work modulo the $2^{n-1}$ top-row involutions
(Proposition~\ref{Prop2.3}). For $1\le i\le n-1$, the $i$-th involution
transposes the two top entries $\lambda[i]^2_2$ and $\lambda[i+1]^2_1$ that are
immediate neighbors, with no bottom between them, in the linear order~\eqref{eq:interlacing_order}. By Proposition~\ref{Prop2.3}, these two
entries are always distinct, the transposition preserves validity and $\psi$,
and the $n-1$ involutions are independent, so every orbit has exactly $2^{n-1}$
elements and $a_k(n)$ equals the number of orbits with $\psi=k$. We fix the
orbit representative obtained by sorting each such neighboring pair
$\bigl(\lambda[i]^2_2,\lambda[i+1]^2_1\bigr)$ into increasing order, and call it
\emph{normalized}.

}

We now record five explicit defects at a block-start $p$. Their bottom rows and
the ordered pairs of top dots of successive triangles are listed with colors
written relative to~$p$ (every other triangle is canonical).
\begin{equation}
	\label{eq:five_defects}
	\renewcommand{\arraystretch}{1.2}
	\begin{array}{l@{\qquad}l@{\qquad}l}
		\text{type} & \text{bottom row on the block} & \text{top dots $\bigl(t^{\mathrm L},t^{\mathrm R}\bigr)$ on the block}\\[2pt]\hline
		S\ (w{=}2) & (p{+}1,p) & (p{+}1,p{+}1),(p,p)\\
		A\ (w{=}3) & (p,p{+}1,p{+}2) & (p,p{+}1),(p{+}2,p),(p{+}1,p{+}2)\\
		B\ (w{=}3) & (p,p{+}2,p{+}1) & (p,p{+}2),(p{+}1,p),(p{+}2,p{+}1)\\
		C\ (w{=}3) & (p{+}1,p,p{+}2) & (p{+}1,p),(p{+}2,p{+}1),(p,p{+}2)\\
		D\ (w{=}4) & (p,p{+}2,p{+}1,p{+}3) & (p,p{+}1),(p{+}2,p),(p{+}3,p{+}2),(p{+}1,p{+}3)
	\end{array}
\end{equation}
A direct check from \eqref{eq:psi_combinatorial} shows that each of
$S$, $A$, $B$, $C$, $D$ is a valid, color-balanced defect on its block, with $\psi=1$, and
that it differs from $\lambda^{\mathrm{can}}$ at \emph{every} triangle of its
block. They refine the local analysis in the proof of
Proposition~\ref{prop:linear_coefficient}: type~$A$ has $\sigma=\mathrm{id}$, while each of
$S$, $B$, $C$, $D$ uses a single adjacent transposition.

\begin{Proposition}
	\label{prop:leading_lower_bound}
	For all $k\ge 1$ and all $n\ge 1$, we have
	the following inequality $($here and below $\binom{m}{k}=0$ whenever $m<k$,
	so the bound is nontrivial only for $n\ge 5k-1)$:
	\begin{equation}
		\label{eq:leading_lower_bound}
		a_k(n)\ \ge\ 5^{k}\binom{n+1-4k}{k}
		 = \frac{5^{k}}{k!} n^{k}+O \bigl(n^{k-1}\bigr),
		\qquad n\to\infty.
	\end{equation}
	In particular, $\liminf_{n\to\infty}k!\ssp a_k(n)/n^{k}\ge 5^{k}$ for each
	fixed $k$. Thus, if $a_k(n)$ is eventually a~polynomial in $n$ of degree $k$
	as in Conjecture~{\rm \ref{conj:coeff_polynomials}}, then the leading coefficient of
	$k!\ssp a_k(n)$ is at least~$5^{k}$, which is the lower-bound half of
	Conjecture~{\rm \ref{conj:leading_coefficient}}.
\end{Proposition}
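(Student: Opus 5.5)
The plan is to build, injectively, many normalized orbit representatives with $\psi=k$ by placing $k$ of the five defects $S,A,B,C,D$ of \eqref{eq:five_defects} on pairwise disjoint blocks. By Lemma~\ref{lem:defect_additivity}, the resulting configuration $\Lambda$ is a valid triangle and $\psi(\Lambda)=\sum_r\psi(\Lambda_r)=k$, since each defect has $\psi=1$. It therefore suffices to count such placements and to check that distinct placements give distinct orbits.

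\emph{Counting.} Pad every defect to width $4$ by appending canonical triangles on its right, so that each defect occupies a reserved window of $4$ consecutive triangles. We then choose $k$ disjoint windows inside $\{1,\dots,n\}$ and a type for each. A configuration of $k$ disjoint length-$4$ windows in a line of $n$ cells corresponds, via the usual stars-and-bars compression (shrink each window to a single cell), to a $k$-subset of $n-3k$ cells, which gives $\binom{n-3k}{k}$ choices. This is at least $\binom{n+1-4k}{k}$, so the count matches the bound up to the slack built into the statement. To be safe, I would reserve a window of width $5$: each defect is followed by at least one canonical triangle that separates it from the next block. Then the count is exactly $\binom{n-4k}{k}$ plus an end adjustment, which should produce $\binom{n+1-4k}{k}$ once the last window is allowed to be truncated at the right end. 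The $5^k$ type choices contribute the factor $5^k$. The asymptotic $\frac{5^k}{k!}n^k+O(n^{k-1})$ and the $\liminf$ statement follow immediately, as does the deduction about the leading coefficient.

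\emph{Injectivity modulo involutions.} This is the main obstacle, because the top-row involutions swap $\lambda[i]^2_2$ and $\lambda[i+1]^2_1$ across triangle boundaries, including boundaries of blocks. My approach is to pass to normalized representatives: sort each neighboring pair into increasing order, and argue that normalization can be recovered block by block. In $\lambda^{\mathrm{can}}$, the pair at the boundary between triangles $i$ and $i+1$ is $(i,i+1)$, which is already sorted.

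Inside a defect, normalization may rearrange entries. However, since each defect is color-balanced and its boundary pairs with the canonical neighbors involve colors $p-1$ versus block colors (all larger than $p-1$) and block colors versus $p+w$ (all smaller than $p+w$), the boundary pairs are already sorted. Hence the normalized form of $\Lambda$ is the concatenation of the normalized forms of its defects.

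Given the normalized $\Lambda$, we recover the placement as follows. The bottom row determines the blocks of types $S,B,C,D$, since each has a nonidentity bottom row with a distinct pattern: $S$ has $(p+1,p)$, $B$ has the transposition at block positions $2,3$ of a width-$3$ block, $C$ has it at positions $1,2$, and $D$ has a width-$4$ block with the transposition in the middle. Here the separating canonical triangle prevents ambiguity in reading the block widths. The type-$A$ blocks have identity bottom, and I would locate them by the non-canonical top entries (color $p+2$ appearing in triangle $p+1$). One still needs to check that the normalized top rows of the five defect types are pairwise distinct, and distinct from canonical, after normalization; this is a finite check. With that, distinct placements yield distinct orbits, each orbit contributes $1$ to $a_k(n)$, and \eqref{eq:leading_lower_bound} follows.
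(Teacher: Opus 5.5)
Your construction and count agree with the paper's. Block starts $p_1<\cdots<p_k$ in $\{1,\dots,n-3\}$ with gaps at least $5$ are counted exactly by the shift $p_r\mapsto p_r-4(r-1)$, which gives $\binom{n+1-4k}{k}$. Your remark that the boundary pairs next to a block are already sorted, so that normalization is block-local, is also correct.

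The gap is in recovering the placement from the orbit. Your claim that the bottom row determines the blocks of types $S,B,C,D$ is false:
\begin{itemize}
\item $S$ at $p$ and $C$ at $p$ both give the identity with $p,p+1$ transposed.
\item $B$ at $p$, $D$ at $p$, $S$ at $p+1$ and $C$ at $p+1$ all give the identity with $p+1,p+2$ transposed.
\end{itemize}
A canonical separator does not help, because its bottom entry $j$ at position $j$ looks exactly like the fixed bottom entries inside $B$, $C$, $D$. So neither starts nor widths can be read from $\sigma$. Your signature for $A$ fails too: after normalization, triangle $p+1$ carries top dots $(p+2,p)$ in each of $A,B,C,D$ started at $p$. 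Locating $A$ by this signature and then checking for an identity bottom therefore presupposes knowing where the block ends.

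The finite check you defer to is also not the one needed. Showing that the five normalized patterns are pairwise distinct and distinct from canonical only compares defects with the same start. It would not exclude, for example, a width-$3$ defect at $p$ whose first triangle looks canonical coinciding with a width-$2$ defect at $p+1$. The property actually needed, which the paper verifies, is stronger: in the orbit invariant (bottom row, normalized top row), each of the five defects differs from $\lambda^{\mathrm{can}}$ at \emph{every} triangle of its block.

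This does hold. The normalized tops are
\begin{itemize}
\item $S\colon(p{+}1,p),(p{+}1,p)$;
\item $A\colon(p,p{+}1),(p{+}2,p),(p{+}1,p{+}2)$;
\item $B\colon(p,p{+}1),(p{+}2,p),(p{+}2,p{+}1)$;
\item $C\colon(p{+}1,p),(p{+}2,p),(p{+}1,p{+}2)$;
\item $D\colon(p,p{+}1),(p{+}2,p),(p{+}3,p{+}1),(p{+}2,p{+}3)$.
\end{itemize}
None of these equals the canonical $(j,j)$ at its own position $j$, while the canonical triangles of $\Lambda$ remain $(j,j)$ after normalization. Hence the non-canonical triangles of $\Lambda$ are exactly $\bigcup_r B_r$. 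The separators make the $B_r$ the maximal runs of this set, which recovers each $p_r$ and $w_r$. Only then can you read off the type: from the width for $S$ and $D$, and from the bottom row on the now-known block for $A$, $B$, $C$. With this in place of your recovery step, the proof goes through.
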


\begin{proof}
	If $n<5k-1$, then the right-hand side of \eqref{eq:leading_lower_bound}
	vanishes and there is nothing to prove. Assume $n\ge 5k-1$.
	Choose block starts $p_1<p_2<\cdots<p_k$ in $\{1,\dots,n-3\}$ with
	$p_{r+1}-p_r\ge 5$ for $1\le r<k$, and at each $p_r$ one of the five defects
	of \eqref{eq:five_defects}, with block $B_r$ (of width $w_r\le 4$, starting
	at $p_r$). Since every defect has width at most $4$ and $p_{r+1}-p_r\ge5$,
	the blocks $B_1,\dots,B_k$ are pairwise disjoint and separated by at least one
	canonical triangle; and $p_k\le n-3$ guarantees $B_k\subseteq\{1,\dots,n\}$.
	Let $\Lambda$ be the configuration assembled from these defects. By
	Lemma~\ref{lem:defect_additivity}, $\Lambda$ is valid and
	$\psi(\Lambda)=\sum_{r}\psi(\Lambda_r)=k$.

	\emph{Distinctness.} We show the orbit of $\Lambda$ determines the data
	$((p_1,\tau_1),\dots,(p_k,\tau_k))$, where $\tau_r$ is the chosen
	type. The bottom row $\sigma$ is invariant under the top-row involutions, and
	the normalized top is the canonical representative of the orbit. Thus the pair
	$ (\sigma,\widehat{\nu} )$, with $\widehat\nu$ the normalized top, is
	an orbit invariant. Each of the five defects differs from
	$\lambda^{\mathrm{can}}$, in this invariant, at every triangle of its block,
	and at no triangle outside it. So the set of triangles where $\Lambda$ differs
	from $\lambda^{\mathrm{can}}$ is exactly $\bigcup_r B_r$. As the blocks are
	separated by canonical triangles, its maximal runs of consecutive triangles
	are precisely the $B_r$, which recovers each $p_r$ (the run minimum) and each
	width $w_r$. Finally the type is read off: widths $2$ and $4$ each occur for a
	single type ($S$, $D$), while the three width-$3$ types are distinguished by
	their bottom rows $(p,p{+}1,p{+}2)$, $(p,p{+}2,p{+}1)$, $(p{+}1,p,p{+}2)$ for
	$A$, $B$, $C$. Hence the assignment data $\mapsto\Lambda$ is injective, and its
	images lie in pairwise distinct orbits, each with $\psi=k$.

	\emph{Count.} The number of choices of $p_1<\cdots<p_k$ in $\{1,\dots,n-3\}$
	with consecutive gaps $\ge5$ equals $\binom{(n-3)-4(k-1)}{k}=\binom{n+1-4k}{k}$,
	and each carries $5^k$ independent type choices. Therefore,
	$a_k(n)\ge 5^k\binom{n+1-4k}{k}$, and
	$\binom{n+1-4k}{k}=\frac{1}{k!}\prod_{j=0}^{k-1}(n+1-4k-j)=\frac{n^k}{k!}+O(n^{k-1})$
	gives~\eqref{eq:leading_lower_bound}.
\end{proof}

\begin{Remark}
	\label{rem:leading_lower_bound}
	For $k=1$,
	Proposition~\ref{prop:leading_lower_bound}
	gives $a_1(n)\ge 5(n-3)$, slightly
	weaker than the exact formula $a_1(n)=5(n-2)$ of Proposition~\ref{prop:linear_coefficient}.

	The matching upper bound $a_k(n)\le\frac{5^k}{k!}n^k+O\bigl(n^{k-1}\bigr)$
	(consistent with all computed data) would, combined with
	Proposition~\ref{prop:leading_lower_bound}, give
	$k!\ssp a_k(n)=5^{k}n^{k}+O\bigl(n^{k-1}\bigr)$, the leading-order behavior
	conjectured in Conjecture~\ref{conj:leading_coefficient}. It amounts to showing that
	$\psi=k$ forces all but $O\bigl(n^{k-1}\bigr)$ configurations to be a union of $k$
	such independent unit defects.
	We do not pursue this bound here.
\end{Remark}

\subsection{Linear cumulant phenomenon}
\label{subsec:linear_cumulant_phenomenon}

If we apply the classical ``moment-cumulant'' transformation (corresponding to
taking the logarithm of the exponential generating function) to the sequence of
the normalized coefficients $m_k(n)\coloneqq k!\ssp a_k(n)$ coming from
\eqref{eq:conj_coeff_polynomials}, then we obtain the following linear functions
of $n$:
\begin{equation*}
	\begin{split}
		&m_1=5n-10,\\
		&m_2-m_1^2=51n-216,\\
		&m_3-3m_2m_1+2m_1^3
=166n-3500,\\
		&m_4 -4m_3m_1-3m_2^2+12m_2m_1^2-6m_1^4
=-28854n+84360,\\
		&m_5 -5m_4m_1-10m_3m_2+ 20m_3m_1^2+30m_2^2m_1-60m_2m_1^3+24m_1^5\\
&\qquad = -
		1258080 n +10684800 .
	\end{split}
\end{equation*}

\begin{Conjecture}
	\label{conj:linear_cumulants}
	This pattern of linear cumulants continues for all $k\ge1$.
\end{Conjecture}

The linear cumulant phenomenon aligns with the local independence picture
discussed after Conjectures~\ref{conj:coeff_polynomials} and~\ref{conj:leading_coefficient}: if the
$\psi$-statistic arises from choosing independent local defects at disjoint
interfaces, then the distribution of $\psi$ should be approximately a sum of
independent random variables, one per interface.
For such sums, cumulants grow linearly in the number of summands.

\subsection{Moment sequences, revisited}\label{subsec:moment_sequences}

By convention, let us add the initial value $T_2(0;q)=1$.
Let us consider the question whether the sequence $T_2(n;q)=2^{n-1}P_n(q)$,
$n\ge0$, is a Stieltjes moment sequence in $n$ for each fixed $q\in[0,1]$, that
is, there exists a nonnegative Borel measure $\mu_q$ on $[0,+\infty)$ such that
\begin{equation*}
	T_2(n;q)  =  \int_{0}^{+\infty} x^n \, {\rm d}\mu_q(x), \qquad n=0,1,2,\dots.
\end{equation*}
We know that this is the case for $q=0$ and $q=1$ (for the evaluation of
$T_2(n;0)$, see the proof of Proposition~\ref{prop:linear_coefficient}):
\begin{equation*}
	T_2(n;0) =\begin{cases}
		1,       & n=0, \\
		2^{n-1}, & n\ge1,
	\end{cases}
	\qquad T_2(n;1) = n!
	\cdot H_n.
\end{equation*}
Indeed, for $q=0$, we have $\mu_0=\frac{1}{2}\delta_0+\frac{1}{2}\delta_2$, and
for $q=1$, the sequences $\{n!\}$ and $\{H_n\}$ are both Stieltjes moment
sequences, so their product is also a Stieltjes moment sequence.
For $\{H_n\}$, the Stieltjes moment property follows from the classical Stieltjes
continued fraction for the median Genocchi numbers, with coefficients
$\alpha_{2k-1}=\alpha_{2k}=k^2$; see \cite{DebSokal2024}.

For other values of $q\in(0,1)$, we employ the classical test based on two
Hankel determinants, $\det[ T_2(i+j-2;q) ]_{i,j=1}^k$ and
$\det[ T_2(i+j-1;q) ]_{i,j=1}^k$, for $k$ up to $5$.
We find that both determinants may be negative for sufficiently small $q>0$, for
instance:
\begin{equation*}
	\det[ T_2(i+j-2;q) ]_{i,j=1}^5<0,\qquad 0<q<c_1, \qquad c_1\approx
	0.08462\dots,
\end{equation*}
and
\begin{equation*}
	\det[ T_2(i+j-1;q) ]_{i,j=1}^3<0,\qquad 0<q<c_2, \qquad c_2\approx
	0.04641\dots.
\end{equation*}
The values $c_1$ and $c_2$ are the smallest positive roots of the respective
determinants viewed as polynomials in $q$.
Thus, for sufficiently small $q>0$, the sequence $\{T_2(n;q)\}_{n\ge0}$ is not a~Stieltjes moment sequence in~$n$.
Since the property holds at the endpoints $q=0$ and $q=1$, the moment sequence
property undergoes a transition at $q=0^+$, ruling out a uniform representation
of~$T_2(n;q)$ as moments of a family of nonnegative Borel measures varying in
$q$ over $[0,1]$.

\section{Sampling depth-2 colored interlacing triangles}\label{sec:sampling}

Here, we describe a simple Markov chain which preserves the $q$-weighted
distribution on depth\nobreakdash-$2$ colored interlacing triangles.
That is, each triangle $\boldsymbol\lambda\in\mathcal{T}_2(n)$ is sampled with
probability proportional to $q^{\psi(\boldsymbol\lambda)}$; throughout this
section, we assume $q>0$.

The code is available at the GitHub repository
\[
	\text{\url{https://github.com/lenis2000/colored_interlacing_triangles_q_sampling}}
\]
Define a Markov chain on the state space $\mathcal{T}_2(n)$ that uses two types
of local moves.

\begin{Definition}[level-$2$ swap]
	\label{def:level2_swap} Given a position $p \in \{1, \dots, 2n-1\}$, the
	\emph{level-$2$ swap at position $p$} exchanges $\lambda^2_p$ and
	$\lambda^2_{p+1}$.
	The move is \emph{valid} if the resulting configuration still satisfies the
	interlacing constraint.
\end{Definition}

\begin{Definition}[level-$1$ swap with reconciliation]
	\label{def:level1_swap} Given a position $i \in \{1, \dots, n-1\}$, let
	$a = \lambda^1_i$ and $b = \lambda^1_{i+1}$.
	The \emph{level-$1$ swap at position $i$} exchanges $a$ and $b$ in
	$\lambda^1$, then applies a deterministic \emph{reconciliation} to
	$\lambda^2$ to restore interlacing.

	Let the \emph{between region} consist of positions $\lambda^2_{2i}$ and
	$\lambda^2_{2i+1}$ (the level-$2$ entries that lie between the $i$-th and
	$(i+1)$-st positions in the linear order \eqref{eq:interlacing_order}).
	The reconciliation proceeds as follows:
	\begin{enumerate}[\rm (i)]\itemsep=0pt
		\item If neither $a$ nor $b$ appears in the between region, no change to $\lambda^2$ is needed.
		\item If only $a$ appears in the between region, replace it with $b$,
		then find a copy of $b$ to the right of the between region, and replace it
		with $a$.
		\item If only $b$ appears in the between region, then act symmetrically to (ii):
		replace $b$ with $a$ in the between region, then replace a copy of $a$
		to the left of the between region with $b$.
		\item If both $a$ and $b$ appear in the between region, find $a$ to the left of the between region
		and $b$ to the right, and swap them.
	\end{enumerate}
	This move is always valid: one readily sees that interlacing is preserved.
\end{Definition}

\begin{Proposition}[connectedness of the state space]
	\label{prop:connected_state_space} The two types of swaps from
	Definitions~{\rm \ref{def:level2_swap}} and~{\rm \ref{def:level1_swap}} connect all states in
	$\mathcal{T}_2(n)$.
\end{Proposition}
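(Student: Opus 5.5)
The plan is to show that every state can be connected to the canonical configuration $\lambda^{\mathrm{can}}$ (identity bottom row, both top dots of triangle $j$ equal to $j$). Since every move is reversible, this suffices. A level-$2$ swap is its own inverse. A level-$1$ swap at position $i$, followed by another level-$1$ swap at the same position, should return the original state; I would verify this case by case from the reconciliation rules. Even without exact reversibility, the proof can be arranged so that only level-$2$ swaps are used within a fixed bottom row. Level-$1$ swaps are used only to change the bottom row, and those can be undone by the same kind of argument.

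\textbf{Step 1: reduce to the identity bottom row.} Given any state with bottom row $\sigma$, apply level-$1$ swaps along a sequence of adjacent transpositions that sorts $\sigma$ into $\mathrm{id}$, for instance bubble sort. Definition~\ref{def:level1_swap} asserts that each such move produces a valid triangle. We therefore reach some element of $\mathring{\mathcal{T}}_2(n)$.

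\textbf{Step 2: connect all of $\mathring{\mathcal{T}}_2(n)$ to $\lambda^{\mathrm{can}}$ by level-$2$ swaps.} By Theorem~\ref{thm:Genocchi_counts}, a state in $\mathring{\mathcal{T}}_2(n)$ is encoded by the arcs $\sigma(2c)\le 2c-1<2c\le\sigma(2c-1)$. A level-$2$ swap of adjacent top entries at slots $p,p+1$ is valid exactly when the two entries are different colors and the swap does not move a dot of color $c$ across its bottom dot. Swaps across a bottom dot arise only when $p=2c-1$, which is the within-triangle pair.

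I would proceed by induction on $n$, or equivalently on a potential such as $\sum_c\bigl(\sigma(2c-1)-\sigma(2c)\bigr)$, the total arc length, which equals $n$ exactly for $\lambda^{\mathrm{can}}$. Take the color $c$ of the top dot in slot $1$. If $c\neq 1$, then color $1$ has its left dot in slot $1$ necessarily, since $\sigma(2)\le 1$. So slot $1$ always holds color $1$, and slot $2$ is $\sigma(1)$ only if $\sigma(1)=2$.

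Next, move the right dot of color $1$ leftward by adjacent swaps until it reaches slot $2$. Each step swaps it with a neighbor $d\neq 1$ occupying the slot to its left. Moving $d$ one slot right is legal unless that slot is $\sigma(2d)=2d-1$, the position just left of $d$'s bottom dot. The danger is therefore when the right dot of $1$ sits at slot $2d$ and $d$ sits at $2d-1$, with both straddling $d$'s marker.

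To handle this, I first try to resolve the block by moving $d$'s left dot further left, recursively applying the same procedure to color $d$. The left dot of $d$ may go anywhere $\le 2d-1$, so there is room. After this, color $1$ occupies slots $1,2$, and the triangle splits after the first triangle in the sense of Proposition~\ref{prop:splits_direct_sum}. The remaining part is an element of $\mathring{\mathcal{T}}_2(n-1)$, and induction finishes Step 2.

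The main obstacle is making this clearing step rigorous. The needed claim is that whenever the right dot of color $1$ is at slot $s>2$, some finite sequence of valid level-$2$ swaps strictly decreases $s$ while keeping color $1$ in slot $1$. I would prove it by choosing, among the colors whose dots lie in slots $2,\dots,s-1$, one whose left dot can be pushed left, or whose right dot can be pushed past slot $s$. The key step is a counting argument. Slots $2,\dots,s-1$ contain $s-2$ dots, and colors $2,\dots,\lfloor s/2\rfloor$ have their markers there. A pigeonhole argument should then show that some color has a dot adjacent to slot $s$ that may legally cross it, either a right dot with $\sigma(2d-1)>2d$ or a left dot.

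Finally, I would check that the reconciliation in Step 1 never produces an invalid triangle. This is asserted in Definition~\ref{def:level1_swap}, so no extra work is needed there.
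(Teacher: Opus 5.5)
Your overall plan is the same as the paper's. You sort the bottom row with level-$1$ swaps, then, with identity bottom row, push the second copy of color $1$ leftward to slot $2$ by level-$2$ swaps, peel off the first triangle, and induct. You also correctly isolate the unique obstruction: the right dot of $1$ is in slot $2d$ and the left dot of $d$ is in slot $2d-1$, so the two straddle $d$'s bottom dot and cannot be exchanged.

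\textbf{The gap.} The proposal stops being a proof exactly at this obstruction. You call the clearing step ``the main obstacle'' and offer only a pigeonhole sketch that ``should'' work, and that sketch does not make sense as stated. The only dot adjacent to slot $s$ is the one in slot $s-1$, which in the obstructed case is precisely $d$. So no counting over slots $2,\dots,s-1$ can produce a dot that ``may legally cross'' slot $s$. The appeal to ``recursively applying the same procedure to color $d$'' is also unjustified, since you never show that moving $d$'s left dot cannot itself get stuck.

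\textbf{The missing observation.} No recursion or counting is needed; this is how the paper resolves it.
\begin{enumerate}
\item Slots $2d-2$ and $2d-1$ are the right top dot of triangle $d-1$ and the left top dot of triangle $d$. No bottom dot lies between them; these are exactly the pairs exchanged by the involutions in the proof of Proposition~\ref{Prop2.3}. So swapping them is valid whenever their colors differ.
\item The color $b$ in slot $2d-2$ is not $1$, because the dots of $1$ are in slots $1$ and $2d$. It is not $d$ either, because $d$'s other dot lies right of its bottom dot and slot $2d$ is taken, so that dot is in a slot $\ge 2d+1$.
\item Swap slots $2d-2$ and $2d-1$. This keeps $d$'s left dot at a slot $\le 2d-1$.
\item Now swap slots $2d-1$ and $2d$. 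The dots $b$ and $1$ cross the bottom dot of color $d$, which is the bottom dot of neither, so the move is valid.
\end{enumerate}
The result is $(\dots d\,1\,|\,d\,|\,b\dots)$: the copy of $1$ has moved left by one slot, and the leftward motion continues. With this two-swap fix in place of the pigeonhole paragraph, the rest of your Step~2 goes through.

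On the point you deferred: each of the four reconciliation cases in Definition~\ref{def:level1_swap} is indeed an involution, so level-$1$ swaps are self-inverse and the ``reach the canonical state'' argument gives connectivity. The paper leaves this implicit.
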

\begin{proof}
	We show that any triangle can be transformed to the identity one with
	$\lambda^1 = (1, 2, \dots, n)$ and
	$\lambda^2 = (1, 1, 2, 2, \dots, n, n)$.
	First, level-1 swaps can sort $\lambda^1$ to the identity permutation.
	The reconciliation ensures that $\lambda^2$ is adjusted to maintain
	interlacing.

	Second, once $\lambda^1 = (1, 2, \dots, n)$, the level-$2$ swaps can
	rearrange $\lambda^2$ within the constraints imposed by interlacing.
	Indeed, one copy of $1$ in the second level is fixed at position $1$.
	Let us move the other copy of $1$ to position $2$ using level-$2$ swaps
	which move $1$ leftward.
	The only obstruction to the leftward movement of $1$ is a configuration of
	the form $(\dots b a|a|1c\dots)$, which prevents swapping $1$ and $a$ on
	the second level.
	However, since $a\ne b$, we can first swap~$a$ and~$b$ on the second level,
	and then swap~$1$ and~$b$, resulting in the configuration
	$(\dots a1|a|bc\dots)$.
	This allows us to proceed with moving $1$ leftward.
	After the other copy of $1$ is at position $2$, we can remove the leftmost
	triangle consisting of all $1$'s, and repeat the same procedure for
	$2,3,\dots,n$.
\end{proof}

We can thus apply the \emph{Metropolis--Hastings} (also called the \emph{Markov
chain Monte Carlo}) \emph{algorithm} with our swaps.
In detail, at each step, uniformly choose one of the $3n-2$ possible swap
positions at both levels, attempt the corresponding move (in particular, check
the validity of the level-2 swap), and accept or reject according to the
Metropolis--Hastings rule with acceptance probability
\begin{equation*}
	\alpha(\boldsymbol\lambda \to \boldsymbol\lambda') = \min\bigl(1,
	q^{\psi(\boldsymbol\lambda') - \psi(\boldsymbol\lambda)}\bigr).
\end{equation*}
It is well-known
\cite{LevinPeres2017}
that this procedure defines a Markov chain with stationary distribution
proportional to $q^{\psi(\boldsymbol\lambda)}$.

The results of sampling are illustrated in
Figures~\ref{fig:sampled_triangles}, \ref{fig:sampling_heatmaps} and \ref{fig:psi_histogram}.
The effect of the $q$-weighting is visible both in the sampled triangles and in
the heatmaps.
The latter resemble the permuton limit of the Mallows measure on permutations
\cite{Starr2009}.
Figure~\ref{fig:psi_histogram} shows the empirical distribution of the
$\psi$-statistic, which appears approximately Gaussian.
This is not entirely unexpected, as asymptotic normality is known for various
statistics on random permutations. For example, the number of crossings of a
uniformly random permutation is asymptotically Gaussian, by the
equidistribution of crossings with superfluous 1's in permutation tableaux
\cite{Corteel2007crossings} and the central limit theorem for the latter
\cite{HitczenkoJanson2010}.
The observations in this section are based on numerical simulation. We present
them as computational exploration, and regard proving any of them as an
interesting direction for future work.

\begin{figure}[htpb]
	\centering
	\resizebox{\textwidth}{!}{%
	\begin{tikzpicture}[y=4.5cm]
		\foreach \c/\lpos/\ltpos/\rtpos in {%
		1/1/1/3, 2/3/4/9, 3/2/2/6, 4/5/5/12, 5/7/8/15, 6/8/10/16, 7/9/14/20, 8/4/7/19,%
		9/10/13/23, 10/6/11/17, 11/14/21/32, 12/11/18/25, 13/13/24/31, 14/15/28/34, 15/12/22/35, 16/19/27/39,%
		17/18/33/36, 18/17/30/37, 19/20/26/40, 20/16/29/42, 21/21/41/44, 22/25/38/57, 23/26/47/52, 24/24/46/48,%
		25/22/43/50, 26/23/45/53, 27/27/49/55, 28/28/54/56, 29/30/59/63, 30/33/62/69, 31/29/51/60, 32/32/61/64,%
		33/34/65/77, 34/36/66/72, 35/37/68/75, 36/35/67/70, 37/31/58/73, 38/40/76/81, 39/39/74/79, 40/38/71/89,%
		41/42/82/85, 42/43/80/90, 43/41/78/84, 44/44/83/88, 45/45/86/93, 46/46/91/92, 47/48/94/96, 48/47/87/97,%
		49/49/95/98, 50/50/99/100%
		}{%
		\pgfmathsetmacro{\hue}{\c/50}%
		\definecolor{col}{hsb}{\hue,0.8,0.8}%
		\draw[col, line width=2pt] ({\lpos-1},0) -- ({\ltpos*0.5-0.5},1);%
		\draw[col, line width=2pt] ({\lpos-1},0) -- ({\rtpos*0.5-0.5},1);%
		\fill[col] ({\lpos-1},0) circle (7pt);%
		\fill[col] ({\ltpos*0.5-0.5},1) circle (7pt);%
		\fill[col] ({\rtpos*0.5-0.5},1) circle (7pt);%
		}
	\end{tikzpicture}}

	\vspace{1em}

	\resizebox{\textwidth}{!}{%
	\begin{tikzpicture}[y=4.5cm]
		\foreach \c/\lpos/\ltpos/\rtpos in {%
		1/16/27/49, 2/3/3/37, 3/10/15/53, 4/35/31/82, 5/25/17/73, 6/5/9/67, 7/29/36/64, 8/33/51/81,%
		9/11/7/40, 10/44/41/97, 11/15/28/33, 12/39/30/80, 13/1/1/44, 14/13/16/60, 15/45/19/91, 16/38/63/96,%
		17/19/23/65, 18/34/50/92, 19/48/54/99, 20/2/2/76, 21/50/57/100, 22/4/4/26, 23/14/5/39, 24/47/34/94,%
		25/12/18/43, 26/31/14/69, 27/6/11/71, 28/28/21/77, 29/24/20/74, 30/17/12/42, 31/42/70/84, 32/37/52/89,%
		33/20/38/56, 34/23/13/66, 35/46/62/95, 36/9/8/25, 37/26/45/72, 38/40/46/83, 39/41/24/87, 40/49/93/98,%
		41/36/58/75, 42/7/6/90, 43/8/10/79, 44/18/32/47, 45/43/61/86, 46/21/22/59, 47/32/48/85, 48/22/35/78,%
		49/27/29/68, 50/30/55/88%
		}{%
		\pgfmathsetmacro{\hue}{\c/50}%
		\definecolor{col}{hsb}{\hue,0.8,0.8}%
		\draw[col, line width=2pt] ({\lpos-1},0) -- ({\ltpos*0.5-0.5},1);%
		\draw[col, line width=2pt] ({\lpos-1},0) -- ({\rtpos*0.5-0.5},1);%
		\fill[col] ({\lpos-1},0) circle (7pt);%
		\fill[col] ({\ltpos*0.5-0.5},1) circle (7pt);%
		\fill[col] ({\rtpos*0.5-0.5},1) circle (7pt);%
		}
	\end{tikzpicture}}\vspace{-1mm}

	\caption{Sampled depth-$2$ colored interlacing triangles with $n=50$ colors,
	ordered as a rainbow from color $1$ (red) to color $50$ (violet).
	The two triangles were obtained after MCMC runs of $10^7$ steps each.
	Top: $q=0.2$ (here the $\psi$-statistic is $55$).
	Bottom: $q=0.98$ (here $\psi=541$).
	Lines connect each color's occurrences across levels, and indicate the
	interlacing properties.} \label{fig:sampled_triangles}\vspace{-2mm}
\end{figure}

\begin{figure}[!ht]
	\centering
	\includegraphics[width=0.39\textwidth]{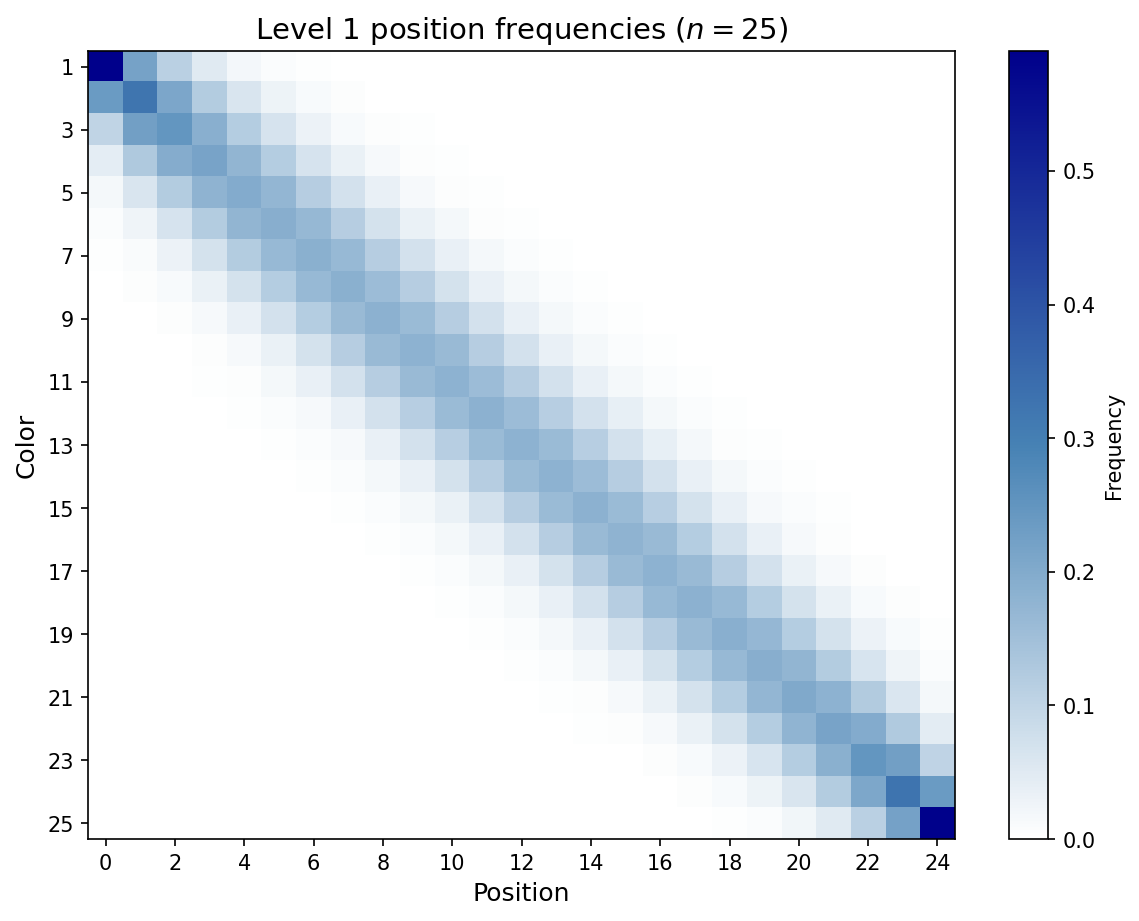}
	\hfill
	\includegraphics[width=0.57\textwidth]{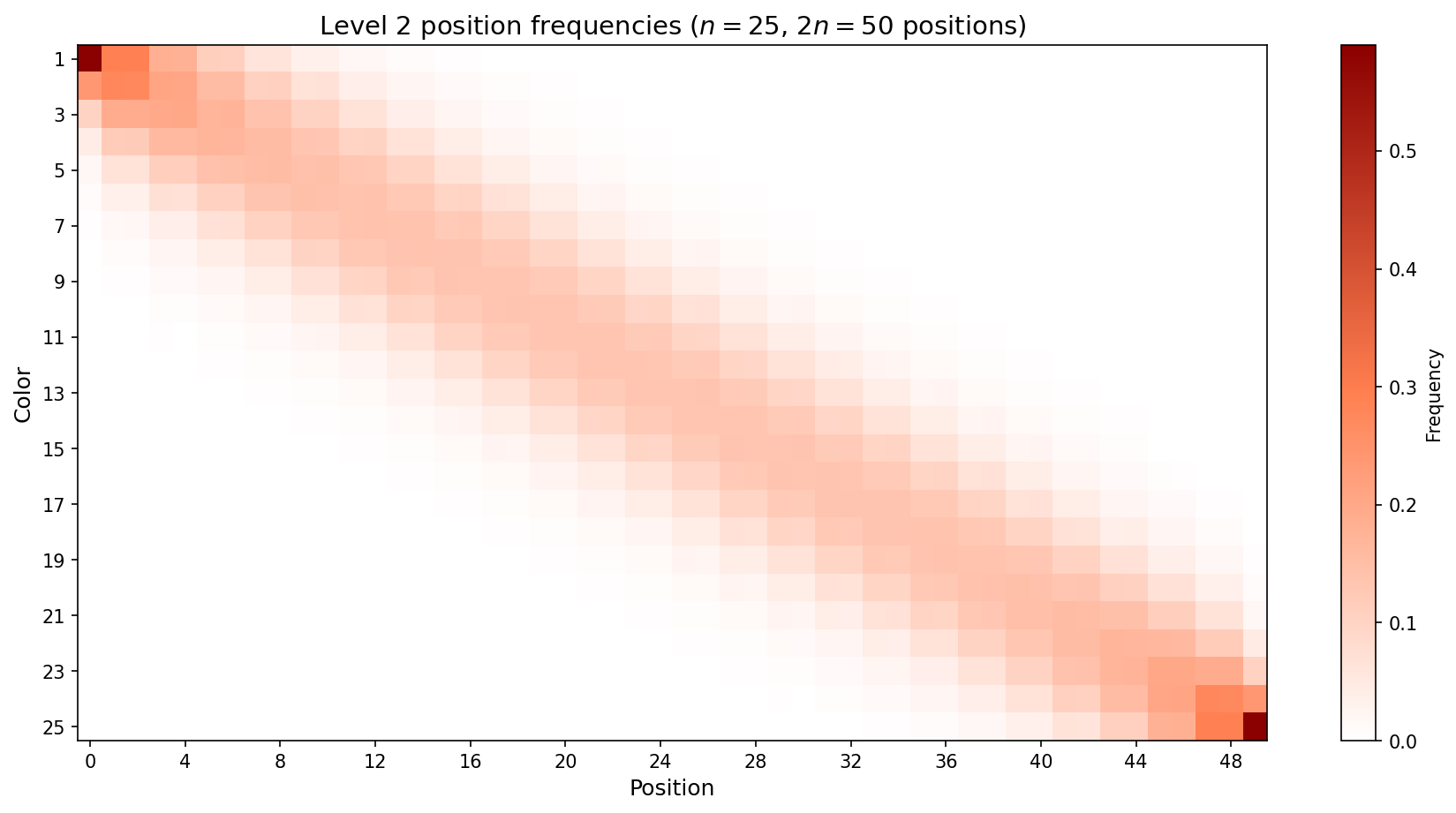}

	\vspace{0.4em}

	\includegraphics[width=0.39\textwidth]{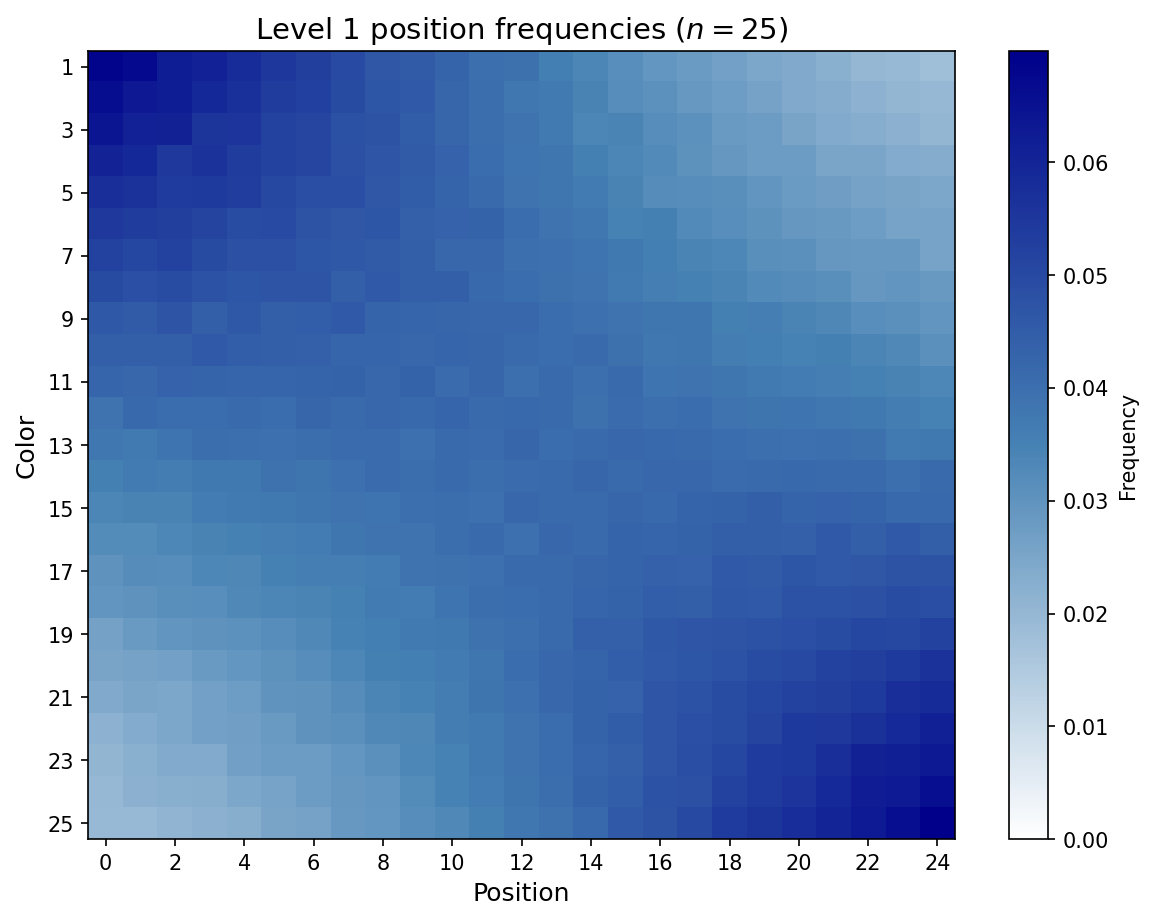}
	\hfill
	\includegraphics[width=0.57\textwidth]{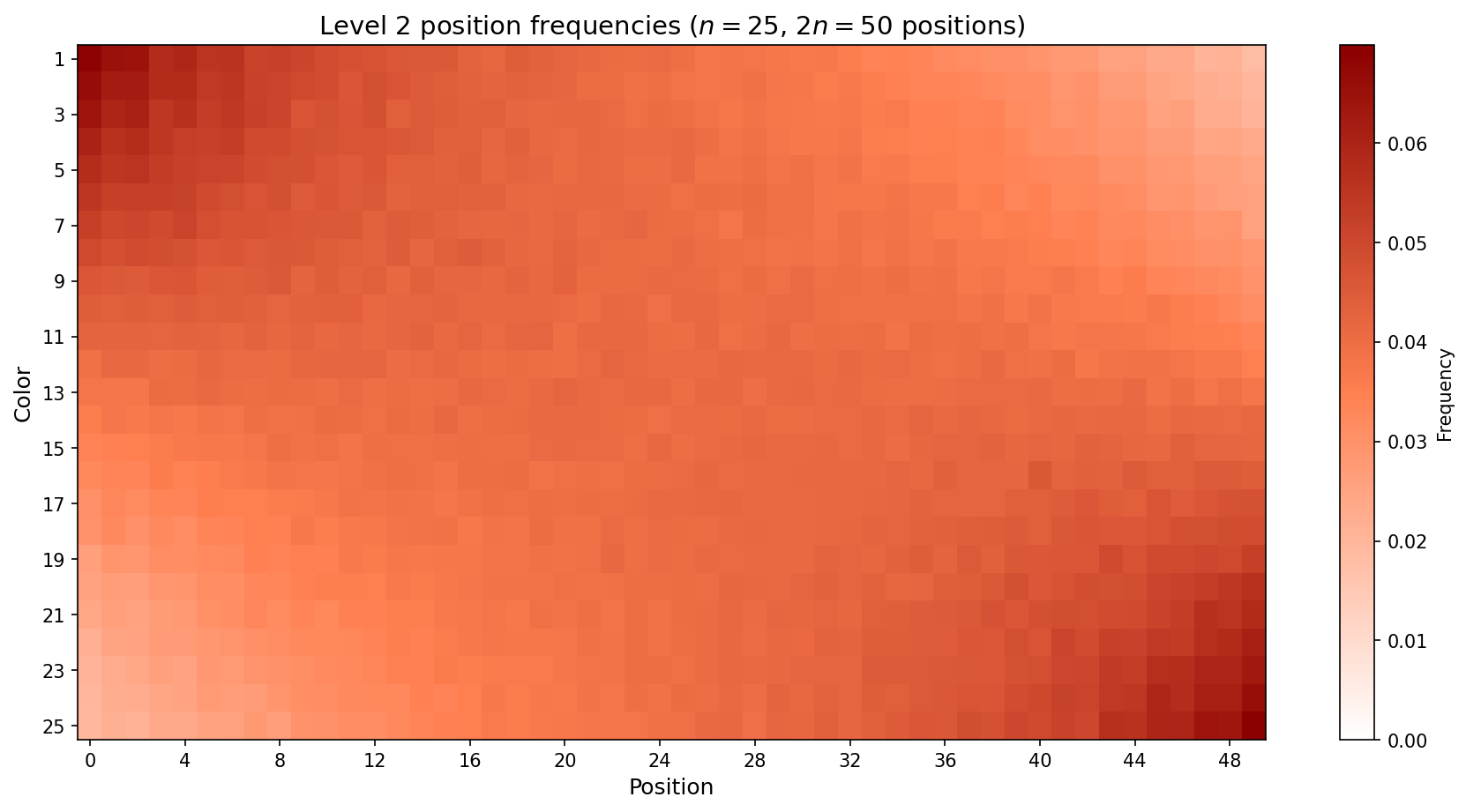}

\vspace{-2mm}

	\caption{Position frequency heatmaps from $10^5$ samples collected after
	$10^6$ ``burn-in'' steps, with $10^4$ steps between samples (so, a total of
	$10^9$ MCMC steps), where $n=25$.
	Left column: Level~$1$ ($n \times n$).
	Right column: Level~$2$ ($n \times 2n$).
	Top row: $q=0.2$.
	Bottom row: $q=0.9$.} \label{fig:sampling_heatmaps}
\end{figure}

\begin{figure}[!ht]
	\centering
	\includegraphics[width=0.7\textwidth]{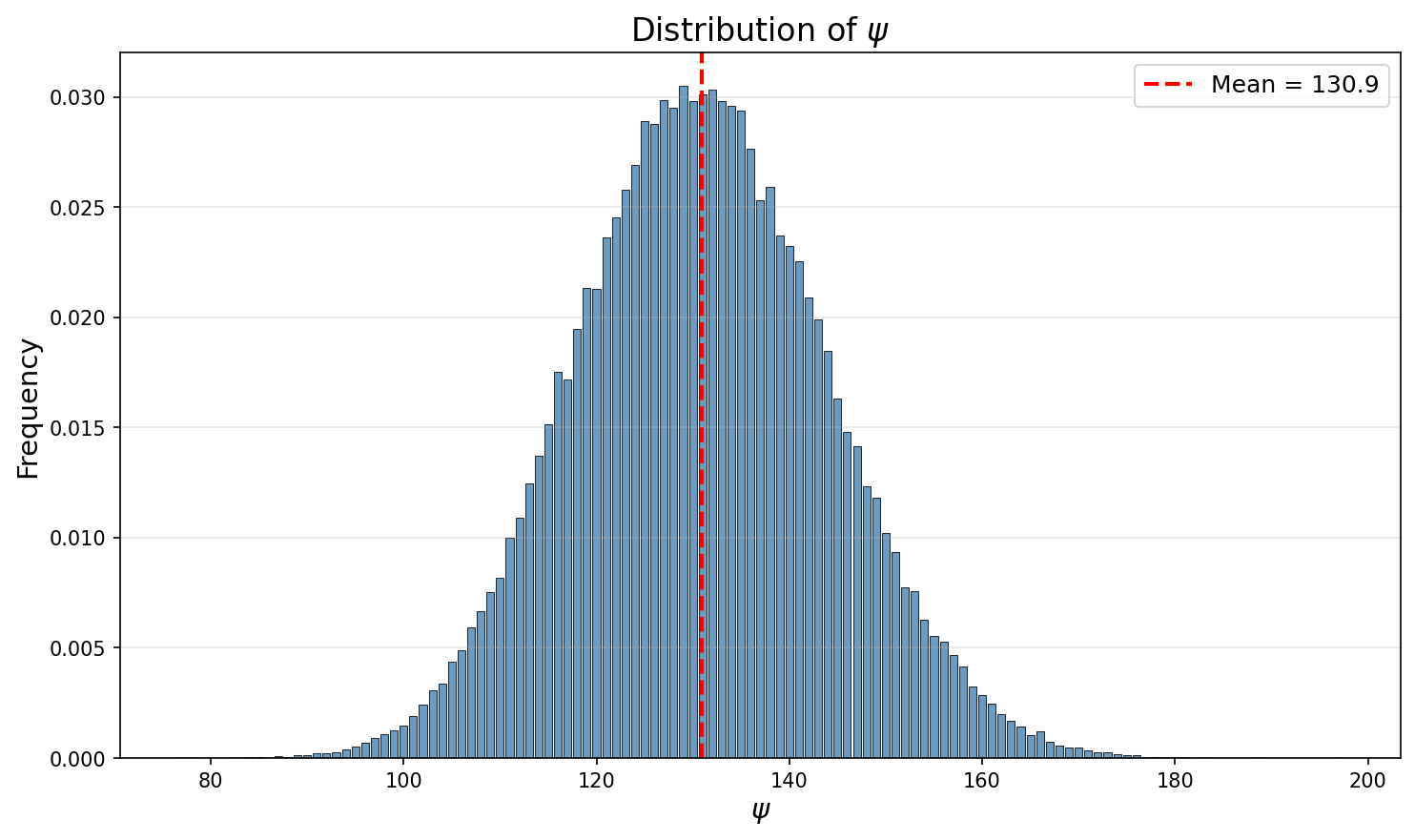}
\vspace{-2mm}

	\caption{Empirical distribution of $\psi$ with $n=25$, $q=0.9$, from the
	same samples as in Figure~\ref{fig:sampling_heatmaps}.} \label{fig:psi_histogram}
\end{figure}

\subsection*{Acknowledgements}

NB was partially supported by the EPSRC grant EP/V048902.
LP was partially supported by the NSF grant DMS-2153869 and by the Simons
Foundation Travel Support for Mathematicians Awards.
Part of this research was performed in Spring 2024 while LP was visiting the
program ``Geometry, Statistical Mechanics, and Integrability'' at the Institute
for Pure and Applied Mathematics (IPAM), which is supported by the NSF grant
DMS-1925919.
Part of this research was performed in July 2026
while both authors were attending the 4th Simons Math Summer
Workshop ``Algebraic methods in probability'' at the Simons Center
for Geometry and Physics (SCGP), Stony Brook University.
We thank Amol Aggarwal and Alexei Borodin for helpful discussions.
We are grateful to Bishal Deb, whose encyclopedic knowledge of Genocchi medians brought to our attention additional connections with the combinatorics literature. Pursuing these connections led to substantial improvements in the paper.

\pdfbookmark[1]{References}{ref}
 \LastPageEnding

\end{document}